\newcommand{\dd}{\mathrm{d}}
\newcommand{\pr}{{\rm{pr}}}
\newcommand{\hor}{{\rm{hor}}}
\newcommand{\ver}{{\rm{ver}}}
\newcommand{\odd}{{\rm{odd}}}
\newcommand{\even}{{\rm{even}}}
\newcommand{\triv}{{\rm{triv}}}
\newcommand{\Vol}{\operatorname{Vol}}
\newcommand{\Ric}{\operatorname{Ric}}
\newcommand{\N}{\mathds N}
\newcommand{\Z}{\mathds Z}
\newcommand{\R}{\mathds R}
\newcommand{\C}{\mathds C}
\newcommand{\G}{\mathsf{G}}
\newcommand{\g}{\mathrm g}
\newcommand{\binfty}{\boldsymbol{\infty}} 
\newtheorem{theorem}{Theorem}[]
\newtheorem{lemma}[theorem]{Lemma}
\newtheorem{proposition}[theorem]{Proposition}
\newtheorem{corollary}[theorem]{Corollary}
\newtheorem*{prob*}{\sc Problem}
\newtheorem{mainthm}{\sc Theorem}
\newtheorem*{mainthm*}{\sc Theorem}
\theoremstyle{definition}
\newtheorem{definition}[theorem]{Definition}
\theoremstyle{remark}
\newtheorem{remark}[theorem]{Remark}
\title[Nonplanar minimal spheres in ellipsoids of revolution]{Nonplanar minimal spheres\\ in ellipsoids of revolution}
\subjclass{53A10, 53C42, 58J55, 34B24, 34C23, 35B32, 49Q05}
\author[R. G. Bettiol]{Renato G. Bettiol}
\address[R. G. Bettiol]{\newline
\indent \!\!\!\begin{tabular}{lll}
CUNY Lehman College & & CUNY Graduate Center \\
Department of Mathematics & & Department of Mathematics \\
250 Bedford Park~Blvd W & & 365 Fifth Avenue \\
Bronx, NY, 10468, USA & & New York, NY, 10016, USA
\end{tabular}
}
\email{r.bettiol@lehman.cuny.edu}
\author[P. Piccione]{Paolo Piccione}
\address[P. Piccione]{\newline
\indent Great Bay University \newline
\indent Department of Mathematics \newline
\indent School of Sciences \newline
\indent Dongguan, Guangdong 523000, China}
\address{\emph{Permanent address:} \newline
\indent Universidade de S\~ao Paulo \newline
\indent Departamento de Matem\'atica \newline
\indent Rua do Mat\~ao, 1010 \newline
\indent S\~ao Paulo, SP, 05508-090, Brazil}
\email{piccione@ime.usp.br}
\numberwithin{equation}{section}
\numberwithin{theorem}{section}
\date{\today}
\begin{document}

\begin{abstract}
We use global bifurcation techniques to establish the existence~of arbitrarily many  geometrically distinct nonplanar embedded smooth minimal $2$-spheres in sufficiently elongated $3$-dimensional ellipsoids of revolution. 
More precisely, we quantify the growth rate of the number of such minimal spheres, and describe their asymptotic behavior as the ellipsoids converge to a cylinder.
\end{abstract}

\maketitle

\vspace{-.2cm}
\section{Introduction}
Consider $3$-dimensional ellipsoids in $\R^4$ with semiaxes $a,b,c,d$, given by:
\begin{equation}\label{eq:ellipsoid}
E(a,b,c,d):=\left\{(x_1,x_2,x_3,x_4)\in\R^4: \frac{x_1^2}{a^2}+\frac{x_2^2}{b^2}+\frac{x_3^2}{c^2}+\frac{x_4^2}{d^2}=1\right\}.
\end{equation}
The reflection about a coordinate hyperplane $x_i=0$ is an isometry of $E(a,b,c,d)$, so its fixed point set $\Sigma_i(a):=E(a,b,c,d)\cap \{x_i=0\}$ is a totally geodesic (in particular, minimal) $2$-sphere. 
Henceforth, we refer to these  as \emph{planar} minimal $2$-spheres.

The following problem was proposed by Yau~\cite[p.~127]{yau-prob}:

\begin{prob*}[Yau, 1987]
Are all minimal $2$-spheres in $E(a,b,c,d)$ planar?
\end{prob*}

Let us mention two motivations for this problem. 
First, a well-known theorem of Almgren~\cite{almgren} implies an affirmative answer if $a=b=c=d$. Second, by the solution to Smale's Conjecture~\cite{hatcher,bamler-kleiner}, the space of embedded $2$-spheres in $S^3$ deformation retracts onto $\R P^3$; so, heuristically applying Morse theory to the area functional on this space, one expects 
at least $4$ embedded minimal $2$-spheres in any Riemannian manifold diffeomorphic to $S^3$. (This expectation has been recently proved for bumpy metrics in \cite{wangzhou}.) Under that expectation,
in analogy with $2$-dimensional ellipsoids in $\R^3$ with distinct semiaxes having the least possible number of simple closed geodesics, Yau's problem asks whether $E(a,b,c,d)$ has the least possible number of minimal $2$-spheres, if the semiaxes are all distinct. 

A negative answer to Yau's problem for sufficiently elongated ellipsoids was given by Haslhofer and Ketover~\cite[Thm~1.5]{hk}. Namely, using Min-Max Theory and Mean Curvature Flow, they established the existence of \emph{at least one} nonplanar embedded minimal $2$-sphere in $E(a,b,c,d)$, provided $a$ is sufficiently large, for fixed $b,c,d$.
Our main results refine this negative answer under a symmetry assumption:

\begin{mainthm*} 
If at least two of the semiaxes $b,c,d$ are equal, then there are arbitrarily many geometrically distinct nonplanar embedded minimal $2$-spheres in $E(a,b,c,d)$, provided $a$ is sufficiently large.
\end{mainthm*}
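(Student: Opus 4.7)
The plan is to exploit the assumed rotational symmetry to reduce the problem to a one-dimensional variational problem, and then apply equivariant global bifurcation theory to a family of planar minimal spheres parametrized by $a$, producing many branches of new, nonplanar solutions.

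After relabeling, I may assume $c = d$, so that $E(a,b,c,c)$ admits an isometric $\SO(2)$-action rotating the $(x_3,x_4)$-plane. The orbit space $Q_a$ is the half-ellipse $\{(x_1, x_2, r) \in \R^3 : x_1^2/a^2 + x_2^2/b^2 + r^2/c^2 = 1,\ r \geq 0\}$, with boundary $\partial Q_a = \{r = 0\}$ (the fixed-point set of the action), and $\SO(2)$-invariant embedded minimal $2$-spheres in $E(a,b,c,c)$ correspond bijectively to smooth embedded arcs in $Q_a$ with endpoints on $\partial Q_a$ that are critical points of the weighted length $\int r \, \dd s_{\bar g_a}$, equivalently geodesics of the conformal metric $r^2 \bar g_a$, where $\bar g_a$ is the metric induced from $\R^3$. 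This reduces the existence problem to an ODE boundary value problem.

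I take as the trivial branch $\Sigma_2(a) = E \cap \{x_2 = 0\}$, whose profile in $Q_a$ is the $\{x_2 = 0\}$ meridian --- a half-ellipse with semiaxes $a$ and $c$ that stretches with $a$. Its Jacobi operator $J_a$ is a regular Sturm--Liouville operator on a bounded interval with boundary conditions imposed by the axis incidence. I then show that, after an appropriate rescaling, $J_a$ is asymptotic as $a \to \infty$ to the Jacobi operator along a longitudinal geodesic of a flat cylinder of radius $c$. A Sturm oscillation and continuity argument yields an infinite sequence of parameter values $a_1 < a_2 < \cdots \to \infty$ at which simple eigenvalues of $J_a$ cross zero transversally.

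At each $a_k$, I invoke the $\SO(2)$-equivariant global bifurcation theorem of Rabinowitz--Krasnoselskii type to produce a global continuum $\mathcal C_k$ of nontrivial solutions emanating from $(a_k, \Sigma_2(a_k))$. The nodal count of the kernel eigenfunction serves as a discrete invariant preserved along each branch, so solutions on branches with distinct indices are pairwise non-congruent. Since the only $\SO(2)$-invariant planar minimal $2$-spheres in $E(a,b,c,c)$ are $\Sigma_1$ and $\Sigma_2$, all nontrivial solutions along $\mathcal C_k$ are automatically nonplanar, and embeddedness and smoothness persist near the bifurcation point by elliptic regularity. Choosing $a$ slightly above $a_N$ then supplies at least $N$ geometrically distinct nonplanar embedded minimal $2$-spheres, proving the theorem. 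I expect the hardest step to be the spectral analysis of $J_a$: verifying transversal zero-crossings for infinitely many eigenvalues via rigorous comparison with the cylindrical limit, together with the control needed to ensure that each global branch $\mathcal C_k$ persists to parameter values $a > a_k$ so that all $N$ new solutions coexist at the chosen $a$.
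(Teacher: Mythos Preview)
Your strategy matches the paper's closely: symmetry reduction to geodesics in a weighted orbit space, bifurcation from the stretching planar meridian, nodal count to separate branches, and Rabinowitz-type global bifurcation. Two points need correction or elaboration.

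First, a technical error: the Jacobi operator along the meridian is \emph{singular}, not regular, since the leading coefficient vanishes where the profile meets the axis $r=0$. The paper handles this via Frobenius analysis and limit-circle Weyl--Titchmarsh theory; relatedly, even the existence and uniqueness of orbit-space geodesics starting orthogonally from the degenerate boundary requires a separate argument (the paper supplies one using Plateau solutions). Second, and more substantively, you obtain embeddedness only \emph{near} each bifurcation point and flag global persistence as the hard step, yet your conclusion needs both simultaneously: all $N$ branches must reach a common large $a$ with embedded solutions there. The paper's key device, which you omit, is a further reduction using the \emph{two} reflection symmetries (across $\{x_1=0\}$ and $\{x_2=0\}$ in your labeling) to restrict to even and odd geodesics, encoded as zeros of scalar maps $f_\even, f_\odd:(0,\infty)\times(-\tfrac\pi2,\tfrac\pi2)\to\R$. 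This two-dimensional setup is what makes Crandall--Rabinowitz and the Rabinowitz dichotomy directly applicable; a separate topological argument then shows that \emph{every} even or odd geodesic (not just those near the trivial one) lifts to an embedded sphere, and properness for the global alternative comes from Choi--Schoen compactness. Without these ingredients your outline identifies the right mechanism but leaves the decisive steps unproved.
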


By \emph{geometrically distinct} minimal $2$-spheres we mean they are noncongruent, i.e., cannot be obtained from one another via isometries of $E(a,b,c,d)$. Note that if at least two among $a,b,c,d$ coincide, then any minimal $2$-sphere in $E(a,b,c,d)$ trivially gives rise to infinitely many minimal $2$-spheres, which are pairwise congruent.

\subsection{Statement of main results}
We prove two main results that imply the above Theorem, and describe certain aspects of the ensuing nonplanar embedded minimal $2$-spheres.
Henceforth, we shall assume that $b=c$, the case $c=d$ being totally analogous.
The first main result provides an estimate on the rate in which new nonplanar minimal $2$-spheres appear in $E(a,b,b,d)$ as the parameter $a$ grows:

\begin{mainthm}\label{mainthmA}
The number $N(a)$ of geometrically distinct (up to congruence) nonplanar embedded minimal $2$-spheres in $E(a,b,b,d)$ satisfies
\begin{equation}\label{eq:liminfNa}
\liminf_{a\to+\infty} \frac{ N(a)}{a} \geq \frac{1}{2d}.
\end{equation}
\end{mainthm}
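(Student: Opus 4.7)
The plan is to use equivariant global bifurcation theory, exploiting the $\SO(2)$-symmetry of $E(a,b,b,d)$ afforded by $b=c$. This $\SO(2)$ acts by rotations in the $(x_2,x_3)$-plane; any $\SO(2)$-invariant embedded minimal $2$-sphere is determined by a profile curve in the $2$-dimensional quotient half-plane, meeting the rotation axis transversely at its two endpoints. The minimal surface equation thereby reduces to a singular second-order nonlinear ODE boundary value problem on a compact interval, putting us in a classical setting for nonlinear functional-analytic bifurcation.

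As trivial branch I would take the totally geodesic planar sphere $\Sigma_4(a):=E(a,b,b,d)\cap\{x_4=0\}=E(a,b,b)$, which is $\SO(2)$-invariant and whose profile curve elongates with $a$. Linearizing the reduced minimal surface equation along this branch yields a one-parameter family of Jacobi-type Sturm--Liouville operators $J_a$ on a compact interval, essentially $J_a = -\Delta_{\Sigma_4(a)} - \Ric_E(e_4,e_4)$ restricted to the $\SO(2)$-invariant sector. The core analytic step is the spectral analysis of $J_a$ as $a\to+\infty$: after the rescaling $x_1=au$, the ellipsoid $E(a,b,b,d)$ degenerates on compact subsets of the waist to the cylinder $\R\times E(b,b,d)$, and $J_a$ converges to a constant-coefficient limit operator whose spectrum is explicit via longitudinal Fourier modes. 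A Sturm-oscillation / WKB argument then produces an increasing sequence of bifurcation instants $a_n\to+\infty$ at which $\ker J_{a_n}$ is odd-dimensional, with the sharp asymptotic $a_n\sim 2dn$.

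At each $a_n$, I would apply a Rabinowitz-type global bifurcation theorem, in the $\Z_2$-equivariant form accounting for the residual discrete symmetries of $\Sigma_4(a)$ broken by the bifurcation eigenfunction, to obtain a global continuum $\mathcal{C}_n$ of nonplanar, $\SO(2)$-invariant, embedded minimal $2$-spheres bifurcating from $(\Sigma_4(a_n),a_n)$. The number of nodes of the bifurcation direction, which coincides with the spectral index of the crossing eigenvalue, is a local invariant of $\mathcal{C}_n$ that distinguishes different branches, prevents secondary identifications, and ensures that distinct $\mathcal{C}_n$ contribute geometrically distinct minimal $2$-spheres at any common parameter value. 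Via the global bifurcation alternative, together with standard compactness bounds on embedded minimal $2$-spheres of controlled area in $E(a,b,b,d)$, each $\mathcal{C}_n$ extends to all $a\ge a_n$; hence $N(a)\ge\#\{n:a_n\le a\}\ge a/(2d)-O(1)$, which yields~\eqref{eq:liminfNa}.

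The main obstacle will be the \emph{sharp} asymptotic $a_n\sim 2dn$: a soft argument easily yields $\liminf N(a)/a>0$, but pinning down the constant $1/(2d)$ requires carefully identifying the cylindrical limit operator and the exact rate at which its eigenvalues pass through zero as the elongation parameter grows. Secondary difficulties include verifying the odd-dimensional kernel condition at each $a_n$ (simplicity of the transversal eigenvalues in the $\SO(2)$-invariant sector), required for the global bifurcation theorem to apply, and controlling the global structure of the branches to rule out compact loops returning to the trivial family.
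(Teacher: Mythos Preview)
Your proposal is correct and follows essentially the same route as the paper: reduction via the rotational symmetry to a geodesic problem in a $2$-dimensional orbit space, bifurcation from the trivial branch $\Sigma_4(a)$ at instants $a_m$ determined by a singular Sturm--Liouville spectral problem, Crandall--Rabinowitz and Rabinowitz global bifurcation with a nodal-count invariant to separate branches and rule out reattachment, and Choi--Schoen compactness for properness. The one notable technical difference is the spectral asymptotic: the paper obtains the needed bound $\limsup_{m} a_m/m \le 2d$ not via a cylindrical limit operator or WKB analysis, but by a direct elementary estimate using piecewise-affine test functions with disjoint supports in the quadratic form $Q_a$, which suffices since only this one-sided inequality (not the full asymptotic $a_m\sim 2dm$) is required for~\eqref{eq:liminfNa}.
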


Our second main result gives further geometric information on these nonplanar minimal $2$-spheres and their asymptotic behavior as $a\nearrow+\infty$. Clearly, $E(a,b,b,d)$ converges smoothly to the elliptic cylinder
$E(\infty,b,b,d)=\Sigma_1(\infty)\times \R$ in $\R^4$, where
\begin{equation*}
    \Sigma_1(\infty)=\left\{(0,x_2,x_3,x_4)\in\R^4: \frac{x_2^2}{b^2}+\frac{x_3^2}{b^2}+\frac{x_4^2}{d^2}=1\right\}
\end{equation*}
is the limit of the planar minimal $2$-sphere $\Sigma_1(a)$ as $a\nearrow+\infty$. We prove that the nonplanar minimal $2$-spheres in $E(a,b,b,d)$ from Theorem~\ref{mainthmA} converge smoothly to $\Sigma_1(\infty)$ as $a\nearrow+\infty$, with arbitrarily large multiplicity, area, and Morse index:

\begin{mainthm}\label{mainthmB}
Given $m\geq2$, there exist $a_m>0$ and a nonplanar embedded minimal $2$-sphere $S_m(a)$ in $E(a,b,b,d)$ for all $a > a_m$, which intersects $\Sigma_4(a)$ transversely at $m$ disjoint parallel circles, 
and, as $a\nearrow+\infty$, converges smoothly (away from the singular points $(0,0,0,\pm d)\in\R^4$) to $\Sigma_1(\infty)$ with multiplicity $m$. In particular, their areas converge, i.e., $|S_m(a)|\to m \cdot |\Sigma_1(\infty)|$ as $a\nearrow+\infty$.
Moreover, $(a_m)_{m\geq2}$ is strictly increasing, and, for any sequence $(\varepsilon_m)_{m\geq2}$ with $0<\varepsilon_m\leq a_{m+1}-a_m$, 
\begin{equation}\label{eq:liminf-scarring}
\liminf_{m\to+\infty} \frac{\mathrm{index}(S_{m}(a_m+\varepsilon_m))}{|S_m(a_m+\varepsilon_m)|} \geq \frac{1}{|\Sigma_1(\infty)|}.
\end{equation}
\end{mainthm}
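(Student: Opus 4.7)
The approach is to produce the spheres $S_m(a)$ via equivariant global bifurcation from the trivial one-parameter family of planar minimal $2$-spheres $\Sigma_4(a)=E(a,b,b,d)\cap\{x_4=0\}$. Since $b=c$, the group $\SO(2)$ acts by rotations in the $(x_2,x_3)$-plane as isometries of $E(a,b,b,d)$, and $\Sigma_4(a)$ is $\SO(2)$-invariant, so it is natural to look for $\SO(2)$-invariant minimal $2$-spheres as bifurcating solutions. After passing to the quotient $E(a,b,b,d)/\SO(2)$, the problem reduces to a free-boundary ODE for the profile curve in the two-dimensional orbit space, and normal variations of $\Sigma_4(a)$ in the $x_4$-direction correspond to scalar functions on its profile.

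The first technical step is to compute the reduced Jacobi operator $L_a$ for $\SO(2)$-invariant normal variations of $\Sigma_4(a)$ in the $x_4$-direction; this is a Sturm--Liouville operator on a compact interval whose spectrum depends continuously on $a$. As the ellipsoid elongates, the domain stretches and the eigenvalues drift monotonically downwards, so the $k$-th eigenvalue of $L_a$ crosses zero at a unique parameter value, producing a strictly increasing unbounded sequence of bifurcation values, which we identify with the $a_m$ in the statement. The kernel eigenfunction at $a_m$ has a prescribed nodal count (by Sturm oscillation) corresponding to exactly $m$ nodal circles upon rotation.

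Applying a global bifurcation theorem of Rabinowitz type (adapted to the equivariant smooth setting) at each $a_m$, one obtains a continuum $\mathcal{C}_m$ of nontrivial $\SO(2)$-invariant embedded minimal $2$-spheres bifurcating from $\Sigma_4(a_m)$. A Sturm-type persistence argument together with the maximum principle shows that the $m$-circle transverse intersection pattern with $\Sigma_4(a)$ is preserved along $\mathcal{C}_m$, so the nodal count identifies the branch uniquely and prevents $\mathcal{C}_m$ from returning to the trivial branch at any $a_{m'}\neq a_m$. Combined with a priori area and curvature bounds for minimal surfaces in a fixed ellipsoid (from the convex hull property and standard monotonicity), this forces the Rabinowitz alternative to yield unboundedness in the parameter direction, so $\mathcal{C}_m$ parametrizes $S_m(a)$ for every $a>a_m$.

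For the asymptotic behavior as $a\nearrow+\infty$ I would appeal to varifold compactness: any subsequential limit of $S_m(a)$ is a stationary integral varifold in the cylindrical limit $\Sigma_1(\infty)\times\R$, and the $m$-circle transverse intersection pattern, together with embeddedness and uniform area bounds, forces the limit to be $m\cdot\Sigma_1(\infty)$, yielding smooth convergence away from the poles $(0,0,0,\pm d)$ and area convergence $|S_m(a)|\to m\,|\Sigma_1(\infty)|$. The Morse index lower bound then follows by tracking the eigenvalue crossings that $S_m(a)$ accumulates as $a$ traverses the bifurcation values $a_2,\ldots,a_m$: each crossing contributes an additional negative Jacobi eigenvalue along $\mathcal{C}_m$, so $\mathrm{index}(S_m(a_m+\varepsilon_m))\gtrsim m$, which combined with the area asymptotics yields \eqref{eq:liminf-scarring}. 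The main obstacle will be verifying the global alternative of the bifurcation theorem, in particular ruling out loss of embeddedness, preserving the $m$-circle nodal structure, and ensuring the branch escapes to $a=+\infty$ rather than collapsing onto the trivial family; sharp equivariant a priori estimates for minimal surfaces in the degenerate elongated regime will be essential.
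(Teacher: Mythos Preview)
Your bifurcation strategy is essentially the paper's: reduce via the rotation action to free-boundary geodesics in the two-dimensional orbit space, study the singular Sturm--Liouville spectrum along the trivial branch $\Sigma_4(a)$, verify Crandall--Rabinowitz at each crossing, use the intersection count with $\Sigma_4(a)$ as a discrete invariant to keep branches disjoint, and invoke Rabinowitz's global alternative together with Choi--Schoen compactness to push each branch out to $a=+\infty$. That part is correct and matches the paper.

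The gap is in your Morse index argument. You say $S_m(a)$ ``accumulates'' negative eigenvalues as $a$ traverses $a_2,\ldots,a_m$, but the branch $\mathcal C_m$ issues from $(a_m,0)$ and never visits the earlier bifurcation points; those eigenvalue crossings occur along the \emph{trivial} family $\Sigma_4(a)$, not along $\mathcal C_m$, so there is nothing for $S_m$ to accumulate. If you instead mean that $S_m(a_m+\varepsilon)$ inherits the equivariant index $\approx m$ of $\Sigma_4(a_m)$ by continuity at the bifurcation point, that only controls the index for $\varepsilon$ small, whereas the statement allows $\varepsilon_m$ as large as $a_{m+1}-a_m$. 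The paper obtains the lower bound by a different, purely geometric mechanism: it classifies geodesics in the limiting strip $\Omega_\infty$ (which carries a Killing field, so this is elementary), shows that the geodesic corresponding to $S_m(a)$ must converge to $m$ copies of the vertical segment and hence develops $m-1$ sharp turns near $\partial\Omega_a$, and observes that pushing each such turn toward the boundary is a length-decreasing variation. These $m-1$ catenoidal necks give $m-1$ independent negative directions for the second variation.

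That same orbit-space limit analysis also supplies the multiplicity-$m$ convergence and the area limit, replacing your varifold-compactness sketch. Your varifold route is not wrong in spirit, but as written it presupposes a uniform area bound on $S_m(a)$ as $a\nearrow+\infty$ and a classification of stationary limits in the cylinder, neither of which you have justified; the paper gets both for free from the ODE picture in $\Omega_\infty$.
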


The exact value of $a_m$ could, in principle, be computed by solving two equations that involve (infinite) continued fractions, as explained in Appendix~\ref{appendixA}. While this arithmetic problem seems to be beyond the reach of currently available methods, related numerical experiments lead us to conjecture that $a_m=m$ if $b=c=d=1$.

It is conceivable that
$S_2(a)$ is congruent to the nonplanar minimal $2$-sphere $S_{H\! K}$ found by Haslhofer--Ketover for sufficiently large $a$. Note that $S_{H\! K}\subset E(a,b,b,d)$ converges as a varifold to $\Sigma_1(\infty)$ with multiplicity $2$ as $a\nearrow+\infty$, just like $S_2(a)$, see~\cite[Prop.~1.6]{hk}, but it is unclear to us whether $S_2(a)$ realizes the second width of $E(a,b,b,d)$, as $S_{H\! K}$ does. More generally, for all $m\geq2$, it would be interesting to determine if $S_m(a)$ can be obtained from $m$-parameter sweepouts of~$E(a,b,b,d)$.

A feature of $S_m(a)$ for \emph{even} values of $m\geq2$ is that $S_m^\pm(a):=S_m(a)\cap \{\pm x_1\geq0\}$ are \emph{free boundary} minimal $2$-disks in the ellipsoidal hemispheres $E^\pm(a,b,b,d):=E(a,b,b,d)\cap \{\pm x_1\geq0\}$, see Remark~\ref{rem:freebdy}. In particular, as $a\nearrow+\infty$, these free boundary minimal disks converge smoothly, away from $(0,0,0,\pm d)$, to the boundary $\Sigma_1(\infty)$ with multiplicity $m/2$. Moreover, inequality \eqref{eq:liminf-scarring} remains valid for the ratios $\mathrm{index}(S_m^\pm(a_m+\varepsilon_m))/|S_m^\pm(a_m+\varepsilon_m)|$, where $m\to+\infty$ through even values. This also implies a free boundary version of Theorem~\ref{mainthmA}; namely, the number $N_{D}(a)$ of geometrically distinct nonplanar embedded free boundary minimal $2$-disks in $E^+(a,b,b,d)$ satisfies 
\begin{equation*}
\liminf_{a\to+\infty}\frac{N_D(a)}{a}\geq\frac{1}{4d}.
\end{equation*}

The situation described in Theorem~\ref{mainthmB} bears several analogies with the \emph{scarring} phenomenon recently discovered by Song and Zhu~\cite{scarring}, where stable minimal hypersurfaces $S$ on a generic closed manifold $(M^n,\g)$, $3\leq n\leq 7$, are shown to be the (renormalized) varifold limit of sequences of minimal hypersurfaces $S_m$ in $(M^n,\g)$ with diverging area and Morse index, and $\mathrm{index}(S_m)/|S_m|\to 1/|S|$ as $m\nearrow+\infty$. While there do not exist any stable minimal surfaces in $E(a,b,b,d)$ for $0<a<+\infty$ because it has $\Ric>0$, the limiting cylinder $E(\infty,b,b,d)$ has $\Ric\geq0$, and $\Sigma_1(\infty)\subset E(\infty,b,b,d)$ is stable. In this sense, Theorem~\ref{mainthmB} establishes the existence of 
minimal $2$-spheres in the varying family of Riemannian manifolds $E(a,b,b,d)$
that \emph{scar} on the limiting stable minimal $2$-sphere $\Sigma_1(\infty)$ as $a\nearrow+\infty$. Similar analogies can be drawn to the works of Colding--DeLellis~\cite{cdl} 
and Hass--Norbury--Rubinstein~\cite{hnr}, on the existence of sequences of minimal surfaces with diverging Morse index that accumulate on stable minimal $2$-spheres. Somewhat paradoxically, the \emph{lack of stability} in our geometric setup (which sets it apart from the above works) is simultaneously one of the main challenges to carry out the desired construction, and also one of the key ingredients in our proof.

While we focus exclusively on the ellipsoids \eqref{eq:ellipsoid} throughout this paper, it is a posteriori clear that conclusions similar to those in Theorems~\ref{mainthmA} and \ref{mainthmB} should hold for minimal $2$-spheres embedded in $3$-spheres that are given by the boundary of more general rotationally invariant convex bodies in $\R^4$, as these become elongated in a direction orthogonal to the $2$-planes of revolution.

\subsection{Overview of proofs}
As in other recent applications of Bifurcation Theory to Geometric Analysis (see \cite{bp-notices,spjms} for surveys), we exploit the instability of a degenerating family of highly symmetric solutions to produce our new solutions.
Namely, the Morse index of the planar minimal $2$-sphere $\Sigma_4(a)$ diverges as $a\nearrow+\infty$,\linebreak and each time $a$ crosses an instant $a_m$ where this Morse index jumps, a new nonplanar minimal $2$-sphere $S_m(a)$ \emph{bifurcates} from $\Sigma_4(a)$.
But, while that yields nonplanar solutions for $a$ near $a_m$, this \emph{local result} is not enough to prove Theorems \ref{mainthmA} and~\ref{mainthmB}.

It is in order to promote the above to a \emph{global result} (in the parameter $a$) that we use the symmetry assumption. More precisely, since $b=c$, the ellipsoids $E(a,b,b,d)$ carry a natural isometric $\mathsf{SO}(2)$-action, given by rotations in the $(x_2,x_3)$-plane, in addition to the reflections 
$\tau_1$ and $\tau_4$
about the hyperplanes $x_1=0$ and $x_4=0$, respectively.
By the classical work of Hsiang--Lawson~\cite{hsiang-lawson}, cf.~Theorem~\ref{thm:HL}, the problem of finding $\mathsf{SO}(2)$-invariant minimal $2$-spheres in $E(a,b,b,d)$ reduces to that of finding free boundary geodesics in the orbit space $\Omega_a=E(a,b,b,d)/\mathsf{SO}(2)$, which is topologically a $2$-disk, endowed with an appropriately rescaled Riemannian metric that degenerates on $\partial \Omega_a$. Due to this degeneracy, we supply an ad hoc proof (Theorem~\ref{thm:existence_gammas}) of the existence of geodesics in $\Omega_a$ starting orthogonally from $\partial \Omega_a$, using solutions to the Plateau problem, following an approach inspired by \cite[Lemma 4.1]{hnr}. Among these geodesics, we have those corresponding to the planar minimal $2$-spheres $\Sigma_1(a)$ and $\Sigma_4(a)$, which are respectively denoted $\gamma_\ver$ and $\gamma_\hor$, see Figure~\ref{fig:verhor}. 
Since the reflections $\tau_1$ and $\tau_4$ commute with the $\mathsf{SO}(2)$-action, they descend to isometries of $\Omega_a$ given by reflections about $\gamma_\ver$ and $\gamma_\hor$, respectively.
There are two special types of free boundary geodesics in $\Omega_a$ that are invariant under certain reflections, which we call \emph{even} and \emph{odd} geodesics; namely, those that start orthogonally from $\partial \Omega_a$ and meet $\gamma_\ver$ orthogonally, or at its central point $O=\gamma_\ver\cap\gamma_\hor$, respectively, see Figure~\ref{fig:even-odd}. This setup enables us to define real-valued functions $f_\even(a,s)$ and $f_\odd(a,s)$, where $s\in \left(-\tfrac\pi2,\tfrac\pi2\right)$, whose zeros determine even and odd geodesics in $\Omega_a$, and hence minimal $2$-spheres in $E(a,b,b,d)$, see Proposition~\ref{prop:even_odd}. The fact that $\gamma_\hor$ is trivially an even and odd geodesic 
for all $a>0$ translates to $f_\even(a,0)=f_\odd(a,0)=0$ for all $a>0$, and this is the \emph{trivial branch} of solutions from which we seek bifurcations. 

Sufficient conditions for the existence of local (continuous) bifurcation branches are given by the celebrated result of Crandall--Rabinowitz~\cite{crandall-rabinowitz}, see Theorem~\ref{thm:CR}, involving the linearizations of $f_\even$ and $f_\odd$. These can be computed, up to rescaling, as boundary values of solutions to the Jacobi equation along $\gamma_\hor$, which is derived as a (singular) Sturm--Liouville ODE in Section~\ref{sec:jacobi}. The corresponding spectral problems are then analyzed in Section~\ref{sec:singularSL}, see Figure~\ref{fig:eigenvalues}. Through this analysis, we find a sequence $(a_m)_{m\geq1}$ of values of the parameter $a$ which are bifurcation instants for $f_\even$ if $m$ is even, and for $f_\odd$ if $m$ is odd. 
Counting the zeros of eigenfunctions whose eigenvalue crosses zero at $a=a_m$, we see that the corresponding minimal $2$-spheres intersect $\Sigma_4(a)$ at $m$ orbits of the $\mathsf{SO}(2)$-action. The first bifurcation instant is $a_1=d$, corresponding to the ellipsoid $E(d,b,b,d)$, and its bifurcation branch consists of \emph{planar} minimal $2$-spheres congruent to $\Sigma_1(d)$ and $\Sigma_4(d)$. Aside from this first uninteresting bifurcation, all other bifurcation branches issuing at $a=a_m$, $m\geq 2$, give rise to nonplanar minimal $2$-spheres. Bifurcation branches are pairwise disjoint because the number of intersections with $\Sigma_4(a)$ is locally constant; in particular, they do not reattach to the trivial branch. This rules out one of the possibilities in the dichotomy established by the global bifurcation theorem of Rabinowitz~\cite{rabinowitz}, see Theorem~\ref{thm:rabinowitz}; so all branches must be noncompact and thus persist for all $a>a_m$, see Figure~\ref{fig:branches}. Compactness of the sets of even and odd geodesics in $\Omega_a$, which follows e.g.~ from \cite{choi-schoen}, is used in a crucial way to apply this global bifurcation result.
The remainder of the proof of Theorem~\ref{mainthmA} follows from estimating the asymptotic growth of $(a_m)_{m\geq1}$, as explained in the end of Section~\ref{sec:main}.

The proof of Theorem~\ref{mainthmB} is given in Section~\ref{sec:asymptotic}, analyzing the limiting behavior of geodesics in $\Omega_a$ as $a\nearrow+\infty$. Namely, they converge smoothly to geodesics in the infinite strip $\Omega_\infty=E(\infty,b,b,d)/\mathsf{SO}(2)$, which can be easily described since $\Omega_\infty$ has a nontrivial (constant) Killing vector field. In particular, the only geodesics of $\Omega_\infty$ that intersect (the limit of) $\gamma_\hor$ finitely many times are vertical segments, so geodesics corresponding to the bifurcation branch issuing from $a=a_m$ must converge to $m$ copies of (the limit of) $\gamma_\ver$. 
Moreover, as $a\nearrow+\infty$, these geodesics develop $m-1$ sharp turns near $\partial \Omega_a$, which correspond to $m-1$ catenoidal necks in the minimal $2$-sphere in $E(a,b,b,d)$ near the fixed points $(0,0,0,\pm d)$ of the $\mathsf{SO}(2)$-action. This implies they have Morse index at least $m-1$, leading to \eqref{eq:liminf-scarring}.

\subsection{Acknowledgements}
It is a pleasure to thank Camillo De Lellis, Fernando Cod\'a Marques, 
Davi M\'aximo, and Joaqu\'\i n P\'erez for many discussions on minimal surfaces and the Plateau problem, and Jimmy Petean for conversations on global bifurcation. We also thank John Toland for providing us a copy of his book \cite{toland}, which was an excellent reference for some of the background results that we employ.\newline
\indent The first-named author was supported by the National Science Foundation grant DMS-1904342 and CAREER grant DMS-2142575, by PSC-CUNY Award \# 62074-00 50, and Fapesp (2019/19891-9). The second-named author was supported by grants from CNPq and Fapesp (2022/14254-3, 2022/16097-2), Brazil.

\section{Preliminaries}
For the convenience of the reader, we recall the symmetry reduction principle for $\G$-invariant minimal submanifolds pioneered by Hsiang and Lawson~\cite{hsiang-lawson}, 
and the bifurcation theorems of Crandall and Rabinowitz~\cite{crandall-rabinowitz} and Rabinowitz~\cite{rabinowitz}.

\subsection{Hsiang--Lawson reduction}
Suppose $(M,\g)$ is a complete Riemannian manifold with an isometric action of a compact Lie group $\G$, and let $\Pi\colon M\to M/\G$ be the projection map to the orbit space. It is well-known that the principal part $M_{\pr}\subset M$ is an open, dense, and connected subset, and $(M_{\pr}/\G,\check{\g})$ is a (smooth) Riemannian manifold, such that $\Pi\colon M_{\pr}\to M_{\pr}/\G$ is a Riemannian submersion, see e.g.~\cite[Sec 3.4-3.5]{mybook}. For simplicity, we assume that $M$ has no exceptional orbits, that is, nonprincipal orbits have dimension strictly smaller than principal orbits.

The volume function of principal orbits, $V\colon M_{\pr}/\G\to \R$, $V(x)=\Vol(\Pi^{-1}(x))$, is a smooth function that extends to a continuous function
\begin{equation}\label{eq:vol}
V\colon M/\G\longrightarrow \R,
\end{equation}
which vanishes identically on $\partial(M/\G)$.
The \emph{cohomogeneity} of a $\G$-invariant submanifold $\Sigma\subset M$ is the codimension $k$ that principal $\G$-orbits have inside $\Sigma$. 
For instance, principal orbits are themselves $\G$-invariant submanifolds of $M$, of cohomogeneity $k=0$. 
Clearly, e.g., by Palais' symmetric criticality principle, such a cohomogeneity $0$ submanifold $\Sigma\subset M$ is minimal if and only if $\Sigma/\G=\Pi(\Sigma)\in M_{\pr}/\G$ is a critical point of \eqref{eq:vol}. This characterization is naturally extended to submanifolds of cohomogeneity $k\geq1$ in the following well-known result~\cite[Thm.~2]{hsiang-lawson}.

\begin{theorem}[Hsiang--Lawson]\label{thm:HL}
A $\G$-invariant submanifold $\Sigma$ of cohomogeneity $k\geq1$ is minimal in $(M,\g)$ if and only if the projection $\Sigma_{\pr}/\G=\Pi(\Sigma\cap M_\pr)$ of its principal part is minimal in $(M_{\pr}/\G,V^{2/k}\check{\g})$.
\end{theorem}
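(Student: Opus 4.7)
The plan is to rewrite the volume of $\Sigma$ as a weighted $k$-dimensional area on $\bar\Sigma:=\Sigma_{\pr}/\G$, absorb the weight into a conformal change of metric, and then invoke Palais' principle of symmetric criticality. First I would exploit the Riemannian submersion structure: since $\Pi\colon M_{\pr}\to M_{\pr}/\G$ restricts to a Riemannian submersion from $\Sigma_{\pr}$ onto the $k$-dimensional submanifold $\bar\Sigma\subset M_{\pr}/\G$, whose fiber over $x\in\bar\Sigma$ is the principal orbit $\Pi^{-1}(x)$ of volume $V(x)$, the coarea formula (or Fubini for Riemannian submersions) yields
$$|\Sigma|_{\g} \;=\; |\Sigma_{\pr}|_{\g} \;=\; \int_{\bar\Sigma} V(x)\, \dd A_{\check\g}(x),$$
the first equality using that $\Sigma\setminus\Sigma_{\pr}$ lies in the singular stratum of the $\G$-action, which has positive codimension in $M$ and is thus negligible for the $\dim\Sigma$-dimensional Hausdorff measure.

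Next, the key algebraic observation: under the pointwise conformal change $\tilde\g:=V^{2/k}\check\g$ on $M_{\pr}/\G$, the $k$-dimensional area element of any $k$-submanifold scales by $(V^{2/k})^{k/2}=V$. In particular,
$$|\bar\Sigma|_{\tilde\g} \;=\; \int_{\bar\Sigma} V\,\dd A_{\check\g} \;=\; |\Sigma|_{\g},$$
so the volume of $\Sigma$ in $(M,\g)$ and the $k$-dimensional area of $\bar\Sigma$ in $(M_{\pr}/\G,\tilde\g)$ agree pointwise as functionals, and the same identity holds for any $\G$-invariant deformation of $\Sigma$ (which corresponds to a deformation of $\bar\Sigma$ in the orbit space).

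It then remains to match variations and apply symmetric criticality. Any smooth $\G$-invariant normal variation of $\Sigma$ descends via $\Pi$ to a smooth normal variation of $\bar\Sigma$ in $(M_{\pr}/\G,\tilde\g)$; conversely, a normal variation of $\bar\Sigma$ lifts through horizontal sections to a $\G$-invariant normal variation of $\Sigma_{\pr}$, extended $\G$-equivariantly across singular orbits via the slice theorem. Under this bijection, the two volume/area functionals coincide, so their critical points match. By Palais' principle of symmetric criticality, applied to the $\G$-invariant volume functional on the space of $\G$-invariant submanifolds near $\Sigma$, critical points under $\G$-invariant variations are precisely the critical points under arbitrary variations, i.e., minimal submanifolds. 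Concatenating these equivalences gives the theorem.

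The step I expect to be most delicate is the last one: rigorously verifying that a smooth normal variation of $\bar\Sigma$ inside the smooth manifold $M_{\pr}/\G$ lifts to a genuinely smooth $\G$-invariant variation of the full $\Sigma$ across the singular orbits it may contain. This is a standard but nontrivial consequence of the $\G$-equivariant tubular neighborhood theorem, and one must separately verify that the equivariant lift has the correct regularity at singular orbits despite the metric $\tilde\g$ degenerating on $\partial(M/\G)$; conveniently, the variational problem lives on $M_{\pr}/\G$ where $\tilde\g$ is an honest Riemannian metric, so the degeneracy plays no role in the proof itself.
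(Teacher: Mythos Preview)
The paper does not supply its own proof of this statement: Theorem~\ref{thm:HL} is quoted as a background result from Hsiang--Lawson~\cite[Thm.~2]{hsiang-lawson}, so there is nothing in the paper to compare your argument against.

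That said, your sketch is correct and is essentially the standard variational proof. The chain
\[
|\Sigma|_{\g}=\int_{\bar\Sigma} V\,\dd A_{\check\g}=|\bar\Sigma|_{V^{2/k}\check\g}
\]
via Fubini for Riemannian submersions followed by the conformal rescaling of the $k$-dimensional area element is exactly right, and reduces the equivalence of critical points to the bijection between $\G$-invariant normal variations of $\Sigma$ and normal variations of $\bar\Sigma$. Two minor remarks. First, the invocation of Palais' principle can be replaced by a one-line direct argument: the mean curvature vector of a $\G$-invariant submanifold is itself $\G$-invariant, so if the first variation $-\int_\Sigma\langle H,X\rangle$ vanishes for all $\G$-invariant $X$, taking $X=H$ forces $H\equiv 0$. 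This sidesteps any functional-analytic setup for ``the space of submanifolds.'' Second, your measure-zero claim for $\Sigma\setminus\Sigma_{\pr}$ is not quite justified by ``positive codimension in $M$'' alone; what you actually use is that $\Sigma_{\pr}$ is open and dense in $\Sigma$ and that minimality is a pointwise (closed) condition, so $H\equiv 0$ on $\Sigma_{\pr}$ extends to all of $\Sigma$ by continuity. With these tweaks the argument is complete.
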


\subsection{Bifurcation theory}\label{sub:biftheory}
Although many results stated below hold in far greater generality, see e.g.~\cite{toland, kielhofer}, we focus on the following simple $2$-dimensional bifurcation setup, which suffices to prove Theorems \ref{mainthmA} and \ref{mainthmB} in the Introduction.

Let $f\colon (0,+\infty)\times\mathcal I\to\R$ be a real analytic function, where $\mathcal I\subset\R$ is an open interval with $0\in\mathcal I$. Suppose that for all $a>0$, the
value $s=0$ is a solution of
\begin{equation}\label{eq:bifeq}
f(a,s)=0,
\end{equation}
i.e., $f(a,0)=0$ for all $a>0$. This defines a subset of $f^{-1}(0)\subset (0,+\infty)\times\mathcal I$ denoted 
\begin{equation}\label{eq:btriv}
\mathcal B_{\triv}=\big\{(a,0) :a >0\big\},
\end{equation}
called the \emph{trivial branch} of solutions. 
If the closure of $f^{-1}(0)\setminus \mathcal B_{\triv}$ contains the point $(a_*,0)$, then 
$a_*$ is called a \emph{bifurcation instant} for \eqref{eq:bifeq}, and $(a_*,0)\in \mathcal B_{\triv}$ is called a \emph{bifurcation point} for \eqref{eq:bifeq}.
By the Implicit Function Theorem, a necessary (but not sufficient) condition for $a=a_*$ to be a bifurcation instant is that it is a \emph{degeneracy instant}, i.e., $\frac{\partial f}{\partial s}(a_*,0)=0$. The set of all bifurcation instants is denoted
\begin{equation}\label{eq:bf}
\mathfrak b(f):=\big\{ a_* \in (0,+\infty) : a_* \text{ is a bifurcation instant for } f(a,s)=0\big\}.
\end{equation}
Given $a_*\in\mathfrak b(f)$, the connected component of the closure of $f^{-1}(0)\setminus\mathcal B_{\triv}$ containing $(a_*,0)$ is called the \emph{bifurcation branch} issuing from $(a_*,0)$, and denoted by $\mathcal B_{a_*}$. Moreover, we let $\mathcal B_{a_*}^\pm := \{(a,s)\in\mathcal B_{a_*}:\pm s>0\}$. Note that $\mathcal B^\pm_{a_*}$ need not be connected, since $\mathcal B_{a_*}\cap \mathcal B_\triv$ may contain other bifurcation points besides $(a_*,0)$.

\subsubsection{Local bifurcation}
A sufficient condition for the existence of a bifurcating branch $\mathcal B_{a_*}$ issuing from $(a_*,0)\in \mathcal B_{\triv}$, and a description of its local structure near $(a_*,0)$, are
given by the following celebrated result of Crandall and Rabinowitz \cite[Thm 1.7]{crandall-rabinowitz}, see also \cite[Sec.~I.5]{kielhofer} or \cite[Thm~8.3.1]{toland}.

\begin{theorem}[Crandall--Rabinowitz]\label{thm:CR}
If the instant $a=a_*$ is such that
\begin{enumerate}[\rm (i)]
\item $\dfrac{\partial f}{\partial s}(a_*,0)=0$,\smallskip

\item $\dfrac{\partial^2 f}{\partial a\partial s}(a_*,0)\neq 0$,
\end{enumerate}
then $a_*\in\mathfrak b(f)$, i.e., $a_*$ is a bifurcation instant for $f(a,s)=0$. More precisely, there is an open neighborhood $U$ of $(a_*,0)$ in $(0,+\infty)\times\mathcal I$, and a real analytic curve $(-\varepsilon,\varepsilon)\ni t\mapsto \big(a(t),s(t)\big)\in U$, with $\big(a(0),s(0)\big)=(a_*,0)$ and $s'(0)>0$, such that
\begin{equation}\label{eq:loc_bif}
f^{-1}(0)\cap U=\big\{(a,0)\in U\big\}\cup \big\{ \big(a(t),s(t)\big) : t\in (-\varepsilon,\varepsilon) \big\}.
\end{equation}
\end{theorem}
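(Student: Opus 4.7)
\medskip

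\noindent \textbf{Proof proposal.} In this two-dimensional setting, the theorem is essentially a direct consequence of the real analytic Implicit Function Theorem, once one removes the trivial branch by a single factorization. The plan is as follows.

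First, I would factor out the trivial branch. Since $f$ is real analytic on $(0,+\infty)\times \mathcal I$ and $f(a,0)=0$ for every $a$, for each $a_0>0$ there is a neighborhood on which the power series expansion in $s$ has no constant term:
\begin{equation*}
f(a,s)=\sum_{k\geq 1} c_k(a)\, s^k,
\end{equation*}
with coefficients $c_k$ real analytic in $a$. Setting
\begin{equation*}
g(a,s):=\sum_{k\geq 1} c_k(a)\, s^{k-1}=\int_0^1 \frac{\partial f}{\partial s}(a,ts)\,\dd t,
\end{equation*}
one obtains a real analytic function $g\colon (0,+\infty)\times \mathcal I\to \R$ with the factorization $f(a,s)=s\,g(a,s)$. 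The nontrivial zeros of $f$ near $(a_*,0)$ are exactly the zeros of $g$, and by construction
\begin{equation*}
g(a,0)=\frac{\partial f}{\partial s}(a,0), \qquad \frac{\partial g}{\partial a}(a,0)=\frac{\partial^2 f}{\partial a\partial s}(a,0).
\end{equation*}

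Next, I would apply the analytic Implicit Function Theorem to $g$ at the point $(a_*,0)$. By hypothesis~(i), $g(a_*,0)=0$, and by hypothesis~(ii), $\partial_a g(a_*,0)\neq 0$. Hence there exists $\varepsilon>0$ and a real analytic function $A\colon (-\varepsilon,\varepsilon)\to (0,+\infty)$ with $A(0)=a_*$ such that, in a sufficiently small open neighborhood $U$ of $(a_*,0)$, the zero set of $g$ coincides with the graph $\{(A(t),t) : t\in (-\varepsilon,\varepsilon)\}$. Defining $a(t):=A(t)$ and $s(t):=t$ gives the required real analytic parametrization, with $s'(0)=1>0$. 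Combining this with the trivial branch $\{(a,0)\}\cap U$ yields \eqref{eq:loc_bif}. Since the graph of $A$ is not contained in $\mathcal B_\triv$, the point $(a_*,0)$ lies in the closure of $f^{-1}(0)\setminus \mathcal B_\triv$, so $a_*\in \mathfrak b(f)$.

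There is no truly hard step in this finite-dimensional version: the main subtlety, and the only point where I would be careful, is justifying that the quotient $g=f/s$ is indeed real analytic (and not merely smooth) at points with $s=0$. This is handled either by the integral representation above together with the fact that uniform limits of real analytic functions in a complex neighborhood are real analytic, or more elementarily by working directly with the locally convergent power series expansion of $f$ in $s$ and checking that the term-by-term division yields a series with the same radius of convergence. Once this is settled, the remainder is a one-line application of the IFT, and the sign $s'(0)>0$ is built into the parametrization $s(t)=t$.
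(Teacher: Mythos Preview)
Your proof is correct. Note, however, that the paper does not supply its own proof of this statement: it is quoted as the classical Crandall--Rabinowitz theorem, with references to \cite{crandall-rabinowitz}, \cite[Sec.~I.5]{kielhofer}, and \cite[Thm~8.3.1]{toland}, and no argument is given in the paper itself. So there is nothing to compare against.

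That said, your argument is exactly the right one in this two-dimensional real analytic setup, and it is worth recording that the general Crandall--Rabinowitz theorem is considerably more delicate: it is formulated for maps between Banach spaces whose linearization is Fredholm of index zero with one-dimensional kernel, and the proof requires a Lyapunov--Schmidt reduction before one can apply the Implicit Function Theorem. Your factorization $f(a,s)=s\,g(a,s)$ is precisely what that reduction collapses to when both the domain variable $s$ and the target are already one-dimensional, so the ``hard'' part of the classical proof disappears. Your handling of the real analyticity of $g$ via the integral representation is also the cleanest route; the power series argument you mention as an alternative works too, but the integral formula makes the computation of $g(a,0)$ and $\partial_a g(a,0)$ immediate.
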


\begin{remark}\label{rem:s-param}
The first set on the right-hand side of \eqref{eq:loc_bif} is $\mathcal B_\triv\cap U$, and the second is $\mathcal B_{a_*}\cap U$.
Although the original statement of the Crandall--Rabinowitz Theorem only gives $s'(0)\neq 0$, since $\mathcal I$ is $1$-dimensional in our case, we may (and will) impose $s'(0)>0$, which corresponds to orienting the parametrization of $\mathcal B_{a_*}\cap U$ so that $\big(a(t),s(t)\big)\in \mathcal B^+_{a_*}$ if $t\in (0,\varepsilon)$, and $\big(a(t),s(t)\big)\in \mathcal B^-_{a_*}$ if $t\in (-\varepsilon,0)$. Furthermore, by the Implicit Function Theorem, up to replacing $U$ with a smaller open neighborhood of $(a_*,0)$, we may even assume $s(t)=t$ for all $t\in(-\varepsilon,\varepsilon)$ and reparametrize the above real analytic curve in terms of $s$, obtaining 
\begin{equation}\label{eq:loc_bifs}
\mathcal B_{a_*}\cap U=\big\{(a(s),s) : s\in (-\varepsilon,\varepsilon) \big\}.
\end{equation}
\end{remark}

\begin{remark}\label{rem:zeros-evenodd}
If $\mathcal I=(-S,S)$ is symmetric around $0\in\mathcal I$, and, for each $a>0$, the function $\mathcal I\ni s\mapsto f(a,s)$ is either \emph{even} or \emph{odd}, then the set $f^{-1}(0)\subset (0,+\infty)\times \mathcal I$ is invariant under the reflection $(a,s)\mapsto (a,-s)$. In this case, if $a_*$ satisfies the hypotheses of Theorem~\ref{thm:CR}, 
then $\mathcal B_{a_*}\cap U$ is also invariant under this reflection,
since it maps the sets $\mathcal B_{a_*}^\pm$ to one another. In other words, if $\mathcal B_{a_*}\cap U$ is parametrized as in \eqref{eq:loc_bifs}, then $s\mapsto a(s)$ is even, i.e., $a(-s)=a(s)$ for all $s\in (-\varepsilon,\varepsilon)$.
\end{remark}

\subsubsection{Global bifurcation}
Under a suitable properness assumption, local bifurcation branches of the form \eqref{eq:loc_bif} are subject to a dichotomy: they either reattach to the trivial branch $\mathcal B_{\triv}$, or else connect to the boundary of $(0,+\infty)\times\mathcal I$. This is a celebrated global bifurcation result of Rabinowitz \cite[Thm 1.3]{rabinowitz}, see also \cite[Sec.~II.5.2]{kielhofer}. A more detailed statement on the geometry of these branches is given in \cite[Thm~9.1.1]{toland}, which, in our $2$-dimensional setup, yields the following:

\begin{theorem}[Rabinowitz]\label{thm:rabinowitz}
Let $a_*\in\mathfrak b(f)$ be a bifurcation instant for $f(a,s)=0$ that satisfies the hypotheses {\rm (i)} and {\rm (ii)} of Theorem~\ref{thm:CR}. Denote by
\begin{equation}\label{eq:bif-curve}
(-\varepsilon,\varepsilon)\ni t\mapsto\big(a(t),s(t)\big)\in (0,+\infty)\times \mathcal I
\end{equation}
the real analytic curve parametrizing $\mathcal B_{a_*}$ near $(a_*,0)$,  as in \eqref{eq:loc_bif}, and assume that 
\begin{enumerate}[\rm (1)]
\item the map $t\mapsto a(t)$ is not constant,
\smallskip

\item the restriction of the projection $p\colon(0,+\infty)\times\mathcal I\to(0,+\infty)$ onto the first factor to $f^{-1}(0)\subset (0,+\infty)\times\mathcal I$ is a proper map.
\end{enumerate} 
Then, the map \eqref{eq:bif-curve} can be extended to a piecewise real analytic curve
\begin{equation}\label{eq:bif-curve-extended}
\phantom{(-\varepsilon,)}\R\ni t\mapsto \big(a(t),s(t)\big)\in(0,+\infty)\times\mathcal I,
\end{equation}
with $s(t)=0$ if and only if $t=0$, whose restrictions to $(-\infty,0)$ and to $(0,+\infty)$ take values in $\mathcal B^-_{a_*}$ and $\mathcal B^+_{a_*}$, respectively, 
such that, as $t\nearrow+\infty$, either:
\begin{enumerate}[\rm (I)]
\item\label{dich:reattach} the curve \eqref{eq:bif-curve-extended} reattaches to $\mathcal B_\triv$, i.e., $\lim\limits_{t\to+\infty}\big(a(t),s(t)\big)=(a_{**},0)$, where $a_{**}\in\mathfrak b(f)$ is a bifurcation instant distinct from $a_*$;
\smallskip

\item\label{dich:bdy} the curve  \eqref{eq:bif-curve-extended} approaches the boundary of $(0,+\infty)\times\mathcal I$ as $t\nearrow+\infty$, i.e., 
\begin{equation*}
\lim\limits_{t\to+\infty}\big(a(t),s(t)\big)\in(0,+\infty)\times\partial\mathcal I, \;\; \text{ or }
 \;\;  \lim\limits_{t\to+\infty}a(t)=0,  \;\;\text{ or }  \;\; \lim\limits_{t\to+\infty}a(t)=+\infty,
\end{equation*}
\end{enumerate}
and an analogous dichotomy holds as $t\searrow-\infty$.
\end{theorem}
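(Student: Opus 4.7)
The plan is to combine the Crandall--Rabinowitz local picture from Theorem \ref{thm:CR} with a real analytic path-following argument along the zero set $f^{-1}(0)\subset(0,+\infty)\times\mathcal{I}$, and to use the properness assumption (2) to extract limits at the endpoints of the maximal extension. This two-dimensional, real analytic version of the Rabinowitz alternative is worked out in detail in \cite[Ch.~9]{toland}, and the strategy below outlines how that general argument specializes to our setting.

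First, starting from the local parametrization $s\mapsto(a(s),s)$ of $\mathcal{B}_{a_*}\cap U$ supplied by Remark \ref{rem:s-param}, I would extend it maximally along $f^{-1}(0)$. At any point $(a_0,s_0)\in f^{-1}(0)$ with $\nabla f(a_0,s_0)\neq 0$, the Implicit Function Theorem yields a unique real analytic arc through $(a_0,s_0)$ inside the zero set. At the isolated singular points of $f^{-1}(0)$ (where $\nabla f=0$), the Weierstrass preparation theorem together with Puiseux expansions describes the zero set locally as a finite union of real analytic arcs meeting at that point, so one can prolong through each such point by consistently pairing an incoming with an outgoing branch, which is precisely what produces the piecewise real analytic structure of \eqref{eq:bif-curve-extended}. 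Assumption (1), that $t\mapsto a(t)$ is nonconstant near $t=0$, rules out the degenerate case in which $\mathcal{B}_{a_*}\cap U$ is a vertical segment $\{a_*\}\times(-\varepsilon,\varepsilon)$, guaranteeing that the continuation genuinely produces a $1$-dimensional curve rather than a higher-dimensional component of $f^{-1}(0)$.

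Next, I would analyze the limit behavior of the maximal extension $(a(t),s(t))$, parametrized monotonically so that the positive-$t$ portion traces out $\mathcal{B}^+_{a_*}$. If $\limsup_{t\to+\infty}a(t)=+\infty$ or $\liminf_{t\to+\infty}a(t)=0$, we are already in alternative \eqref{dich:bdy}. Otherwise, the $a$-coordinates stay in a compact set $K\subset(0,+\infty)$, so by properness of $p$ the intersection $p^{-1}(K)\cap f^{-1}(0)$ is compact, and the curve admits a subsequential limit $(a_\infty,s_\infty)\in f^{-1}(0)$. If $s_\infty\in\partial\mathcal{I}$, we again land in \eqref{dich:bdy}; in the remaining case $s_\infty\in\mathcal{I}$, the piecewise real analytic structure, together with the fact that the zeros of $t\mapsto s(t)$ are isolated, forces $s_\infty=0$ and identifies $a_\infty=:a_{**}\in\mathfrak{b}(f)$ as a bifurcation instant distinct from $a_*$, giving alternative \eqref{dich:reattach}. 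The same reasoning applies verbatim to $t\searrow-\infty$.

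The hardest step is the continuation through singular points of $f^{-1}(0)$ encountered along $\mathcal{B}_{a_*}$: when $\nabla f$ vanishes at an interior point, several analytic arcs cross there, and one must match incoming with outgoing branches so that the extended curve neither retraces pieces already traversed nor spuriously reattaches to $\mathcal{B}_\triv$. The combinatorial/algebraic input required for this matching is the Puiseux-type local description of real analytic varieties, and it is this gluing procedure that necessitates the ``piecewise'' qualifier in the conclusion and makes the construction of \eqref{eq:bif-curve-extended} nontrivial.
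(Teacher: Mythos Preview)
The paper does not supply its own proof of this theorem; it is quoted as a background result attributed to Rabinowitz, with pointers to \cite{rabinowitz}, \cite[Sec.~II.5.2]{kielhofer}, and in particular \cite[Thm~9.1.1]{toland} for the precise two-dimensional real analytic version stated here. Your sketch follows exactly the analytic global continuation approach of \cite[Ch.~9]{toland} that the paper invokes, so there is no independent argument in the paper to compare against.

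One small point in your outline deserves tightening: in the limit analysis you assert that if the curve has a subsequential interior limit $(a_\infty,s_\infty)$ with $s_\infty\in\mathcal I$, then ``the piecewise real analytic structure, together with the fact that the zeros of $t\mapsto s(t)$ are isolated, forces $s_\infty=0$.'' The isolatedness of the zeros of $s(t)$ is not by itself the reason; rather, if $s_\infty\neq 0$ then the local structure of $f^{-1}(0)$ at $(a_\infty,s_\infty)$ consists of finitely many analytic arcs, and since the curve visits every neighborhood of that point infinitely often it must retrace one of these arcs, contradicting the injectivity built into the maximal continuation (or forcing the curve to be a closed loop, which then returns to $\mathcal B_\triv$ by the parity argument in \cite[Thm~9.1.1]{toland}). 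This is precisely the delicate step handled in the reference you cite.
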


\begin{remark}\label{rem:zeros-evenodd2}
Consider the situation of Remark~\ref{rem:zeros-evenodd}, where the sets $\mathcal B_{a_*}^\pm$ are mapped to one another by $(a,s)\mapsto (a,-s)$. If $a_*\in\mathfrak b(f)$ satisfies the hypotheses of Theorem~\ref{thm:rabinowitz}, then \eqref{eq:bif-curve-extended} can be chosen so that $\big(a(-t),s(-t)\big)=\big(a(t),-s(t)\big)$ for all $t\in\R$; in particular, the \emph{same} among \eqref{dich:reattach} or \eqref{dich:bdy} holds as $t\nearrow+\infty$ \emph{and} as $t\searrow-\infty$.
\end{remark}

\subsubsection{Disjoint branches}
We now present an abstract sufficient condition to ensure that different bifurcation branches do not intersect. This approach is inspired by similar results for Yamabe-type PDEs in \cite{yanyan,petean}, where global branches are distinguished by the nodal properties of their solutions, see also \cite[Sec.~III.6]{kielhofer}. 

\begin{definition}\label{def:invariant}
A \emph{discrete-valued invariant} for the equation $f(a,s)=0$ 
is a locally constant function $Z\colon f^{-1}(0)\setminus\mathcal B_\triv \to \N_0$.
\end{definition}

In the above, and throughout, we denote by $\N_0=\N\cup\{0\}$ the set of nonnegative integers.
Clearly, if $Z$ is a discrete-valued invariant for $f(a,s)=0$ and $a_*\in\mathfrak b(f)$, then $Z$ is constant on each connected component of $\mathcal B_{a_*}\setminus\mathcal B_\triv$.
Moreover, if the hypotheses of Theorem~\ref{thm:CR} hold at $(a_*,0)$, then in light of the local form \eqref{eq:loc_bif}, there exists an open neighborhood $\mathcal O$ of $(a_*,0)$ in $(0,+\infty)\times\mathcal I$, such that $\mathcal O\cap \mathcal B^+_{a_*}$ and $\mathcal O\cap \mathcal B^-_{a_*}$ are connected, so $Z$ is constant on each of $\mathcal O\cap \mathcal B^\pm_{a_*}$, and we write:
\begin{equation}\label{eq:zpm}
z^\pm(a_*):=Z|_{\mathcal O\cap \mathcal B^\pm_{a_*}}.
\end{equation}

\begin{proposition}\label{prop:disjoint_noncompact}
Let $Z$ be a discrete-valued invariant for $f(a,s)=0$, and assume that 
the restriction to $f^{-1}(0)$ of the projection $p\colon(0,+\infty)\times\mathcal I\to(0,+\infty)$ onto the first factor is a proper map. Suppose the hypotheses of Theorem~\ref{thm:CR} and hypothesis {\rm (1)} of Theorem~\ref{thm:rabinowitz} hold at every $a_*\in\mathfrak b(f)$.
If for all $a_1,a_2\in \mathfrak b(f)$, we have that $z^+(a_1)\neq z^+(a_2)$, then 
the sets $\overline{\mathcal B_{a_*}^+}$, $a_*\in \mathfrak b(f)$, are noncompact and pairwise disjoint; and similarly for $\overline{\mathcal B_{a_*}^-}$, if $z^-(a_1)\neq z^-(a_2)$ for all $a_1,a_2\in\mathfrak b(f)$.
\end{proposition}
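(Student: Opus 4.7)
The plan is to combine the Rabinowitz dichotomy of \Cref{thm:rabinowitz} with the local constancy of $Z$ to rule out both compactness of the $\overline{\mathcal{B}_{a_*}^\pm}$ and meetings between different branches. The key preliminary observation is that, for each $a_* \in \mathfrak{b}(f)$, the extended real analytic curve $t\mapsto(a(t),s(t))$ produced by \Cref{thm:rabinowitz} has its positive half $\{(a(t),s(t)) : t>0\}$ contained in $\mathcal{B}_{a_*}^+$ and entirely within $f^{-1}(0)\setminus\mathcal{B}_\triv$, where $Z$ is locally constant. Since the curve is connected and passes through a neighborhood of $(a_*,0)$ on which $Z$ takes the value $z^+(a_*)$, the invariant $Z$ must equal $z^+(a_*)$ all along this positive half-branch (and analogously along the negative half-branch).

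For the noncompactness, I would argue by contradiction: assume $\overline{\mathcal{B}_{a_*}^+}$ is compact in $(0,+\infty)\times\mathcal{I}$. Then alternative \eqref{dich:bdy} of \Cref{thm:rabinowitz} cannot hold as $t\nearrow+\infty$, since any of the three sub-possibilities there forces the curve to escape every compact subset of $(0,+\infty)\times\mathcal{I}$. Hence alternative \eqref{dich:reattach} must hold, and the curve reattaches to $\mathcal{B}_\triv$ at some $(a_{**},0)$ with $a_{**}\neq a_*$. Applying the local form \eqref{eq:loc_bif} of \Cref{thm:CR} at $(a_{**},0)$, the portion of our curve near the reattachment point coincides with the local parametrization of $\mathcal{B}_{a_{**}}$; since $s(t)>0$ for $t$ large, it coincides with $\mathcal{B}_{a_{**}}^+$. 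Evaluating $Z$ on this common local portion then yields $z^+(a_*)=z^+(a_{**})$, contradicting the hypothesis.

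For the pairwise disjointness, suppose toward a contradiction that $(a,s)\in\overline{\mathcal{B}_{a_1}^+}\cap\overline{\mathcal{B}_{a_2}^+}$ for some distinct $a_1,a_2\in\mathfrak{b}(f)$. If $s>0$, then $(a,s)\in f^{-1}(0)\setminus\mathcal{B}_\triv$, and local constancy of $Z$ in a neighborhood $V$ of $(a,s)$ forces points of $\mathcal{B}_{a_i}^+\cap V$ (which exist, as $(a,s)$ lies in the closures) to share a common value of $Z$; but those values are $z^+(a_1)$ and $z^+(a_2)$ respectively, giving $z^+(a_1)=z^+(a_2)$, a contradiction. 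If instead $s=0$, then $(a,0)$ is a limit of points in $f^{-1}(0)\setminus\mathcal{B}_\triv$, so $a\in\mathfrak{b}(f)$; the local form \eqref{eq:loc_bif} at $(a,0)$ describes $f^{-1}(0)$ near $(a,0)$ as $\mathcal{B}_\triv$ together with a single analytic curve, so both $\mathcal{B}_{a_1}^+$ and $\mathcal{B}_{a_2}^+$ must coincide locally with $\mathcal{B}_a^+$, forcing $z^+(a_1)=z^+(a)=z^+(a_2)$, again a contradiction. The argument for $\overline{\mathcal{B}_{a_*}^-}$ is identical, replacing $z^+$ by $z^-$.

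I expect the main obstacle to be the careful identification of branches in the boundary case $s=0$ of the disjointness argument: one has to argue that a putative meeting point of two different global branches on $\mathcal{B}_\triv$ is itself a bifurcation instant and then invoke the local uniqueness of the bifurcating analytic curve from \Cref{thm:CR} to conclude that all involved invariants coincide. The noncompactness half, by comparison, is essentially an immediate consequence of Rabinowitz's dichotomy once $Z$ is known to be constant along the extended curve.
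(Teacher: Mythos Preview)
Your approach is correct and essentially the same as the paper's: both use that $Z$ is constant along each $\mathcal B_{a_*}^\pm$, combine this with injectivity of $z^\pm$ to rule out alternative \eqref{dich:reattach} in \Cref{thm:rabinowitz} and to separate the branches. The one place to tighten is your disjointness case $s>0$, where you assert that points of $\mathcal B_{a_i}^+\cap V$ carry the value $z^+(a_i)$; this uses that $Z$ is constant on \emph{all} of $\mathcal B_{a_i}^+$, not just on the Rabinowitz curve, which in turn needs $\mathcal B_{a_i}^+$ to be connected---the paper secures this upfront by observing that the local form \eqref{eq:loc_bif} at every bifurcation point forces $\mathcal B_{a_*}^\pm$ to be connected, and you should insert the same remark.
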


\begin{proof}
Since Theorem~\ref{thm:CR} holds at every $a_*\in\mathfrak b(f)$, the sets $\mathcal B_{a_*}^\pm$ are connected, and their closure is given by $\overline{\mathcal B_{a_*}^\pm}=\mathcal B_{a_*}^\pm \cup \big(\mathcal B_{a_*}\cap\mathcal B_\triv\big)$. In particular, $Z$ is constant and equal to $z^\pm(a_*)$ along each $\mathcal B_{a_*}^\pm$.

As $z^+\colon \mathfrak b(f)\to\N_0$ is assumed injective, it follows that the sets $\mathcal B^+_{a_*}$, $a_*\in\mathfrak b(f)$, are pairwise disjoint. Again from injectivity of $z^+$, we have $\overline{\mathcal B_{a_*}^+}=\mathcal B_{a_*}^\pm\cup\{(a_*,0)\}$, which implies that $\overline{\mathcal B_{a_*}^+}$, $a_*\in\mathfrak b(f)$, are pairwise disjoint. Furthermore, each of them is noncompact by Theorem~\ref{thm:rabinowitz}, as the occurrence of \eqref{dich:reattach} is ruled out by the fact that each $\overline{\mathcal B_{a_*}^+}$ contains only one bifurcation instant.
Similarly, if $z^-\colon \mathfrak b(f)\to\N_0$ is injective, then the same arguments above lead to analogous conclusions for $\overline{\mathcal B_{a_*}^-}$.
\end{proof}

\section{\texorpdfstring{Minimal $2$-spheres of revolution}{Minimal spheres of revolution}}
Let $E(a,b,c,d)$ be an ellipsoid as in \eqref{eq:ellipsoid}, such that at least two among $b, c, d$ are equal. Without loss of generality, to simplify the exposition, we henceforth assume $b=c$ in the remainder of the paper, and denote by $(S^3,\g_a)$ the ellipsoid $E(a,b,b,d)$ with the Riemannian metric induced by the Euclidean metric in $\R^4$.

\subsection{Symmetry reduction}
Consider the isometric action of $\G=\mathsf{SO}(2)$ on $(S^3,\g_a)$ by rotations on the $(x_2,x_3)$-plane, whose orbit through $x=(x_1,x_2,x_3,x_4)\in S^3$ is
\begin{equation}\label{g-orbits}
\G(x)=\{(x_1,\,x_2\cos\theta -x_3\sin\theta,\, x_2\sin\theta+ x_3\cos\theta,\, x_4)\in S^3 : \theta\in\R\}.
\end{equation}
Clearly, $x\in S^3$ is fixed by the action if and only if $x_2=x_3=0$, so the fixed point set $(S^3)^\G$ is an ellipse with semiaxes $a$ and $d$, which is a geodesic in $(S^3,\g_a)$.
All other points belong to principal orbits, i.e., $S^3_{\pr} = S^3\setminus (S^3)^\G$. The orbit space
\begin{equation}\label{eq:quotient}
S^3/\G = \left\{ (x_1,r,x_4)\in\R^3 : \frac{x_1^2}{a^2}+\frac{r^2}{b^2}+\frac{x_4^2}{d^2}=1, \, r\geq 0 \right\}
\end{equation}
has boundary $\partial(S^3/\G)=\{(x_1,0,x_4)\in S^3/\G\}=(S^3)^\G$, and the projection map $\Pi\colon S^3\to S^3/\G$ is given by $\Pi(x)=\big(x_1,\sqrt{x_2^2+x_3^2},\,x_4\big)$. Moreover, the quotient metric $\check{\g}_a$ on $S^3_{\pr}/\G=\{(x_1,r,x_4)\in S^3/\G:r>0\}$ such that $\Pi\colon S^3_{\pr}\to S^3_{\pr}/\G$ is a Riemannian submersion is the metric induced on \eqref{eq:quotient} by the Euclidean metric in~$\R^3$, and the orbital volume function \eqref{eq:vol} is given by
\begin{equation}\label{eq:vol-orbits}
V\colon S^3/\G\longrightarrow\R, \quad V(x_1,r,x_4)=2\pi r=2\pi b\sqrt{1-\frac{x_1^2}{a^2}-\frac{x_4^2}{d^2}}.
\end{equation}

Consider the (open) Riemannian $2$-disk endowed with the conformal metric
\begin{equation*}
\Omega_a:=\big(S^3_{\pr}/\G,V^2\, \check{\g}_a\big),
\end{equation*}
and identify $\partial \Omega_a=\partial(S^3/\G)$.
Observe that the length of a curve $\gamma$ in $\Omega_a$ is equal to the area of $\Pi^{-1}(\gamma)$ in $(S^3,\g_a)$. 
The following is a direct consequence of Theorem~\ref{thm:HL}.

\begin{proposition}\label{prop:reduction}
A $\G$-invariant surface $\Sigma$ in $(S^3,\g_a)$ is minimal if and only if the curve $\Sigma_{\pr}/\G=\Pi(\Sigma\cap S^3_{\pr})$ is a geodesic in $\Omega_a$. 
\end{proposition}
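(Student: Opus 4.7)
The plan is to obtain this as an immediate specialization of the Hsiang--Lawson reduction (\Cref{thm:HL}), once the relevant dimension count and conformal factor are identified.

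First, I would verify that any $\G$-invariant smooth surface $\Sigma\subset(S^3,\g_a)$ has cohomogeneity $k=1$. By \eqref{g-orbits}, principal $\G$-orbits are circles of dimension one; since $\Sigma$ is two-dimensional and cannot be contained in the one-dimensional fixed-point set $(S^3)^\G$, its intersection with $S^3_{\pr}$ is open and dense in $\Sigma$, and the principal $\G$-orbits in $\Sigma$ have codimension $k=\dim\Sigma-1=1$.

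Next, I would apply \Cref{thm:HL} with this value of $k$. The conformal factor appearing in the Hsiang--Lawson metric is $V^{2/k}=V^2$, and the induced Riemannian manifold $(S^3_{\pr}/\G,V^2\,\check{\g}_a)$ is, by definition, precisely $\Omega_a$. The theorem thus asserts that $\Sigma$ is minimal in $(S^3,\g_a)$ if and only if $\Sigma_{\pr}/\G=\Pi(\Sigma\cap S^3_{\pr})$ is a minimal submanifold of $\Omega_a$.

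Finally, since $\Sigma_{\pr}/\G$ is a one-dimensional submanifold of the two-dimensional Riemannian manifold $\Omega_a$, being minimal is equivalent to being a geodesic, which completes the proof. I do not anticipate a genuine obstacle here: the statement is essentially a transcription of the general cohomogeneity-one reduction theorem into the notation established for the $\O(2)$-action on the ellipsoid $E(a,b,b,d)$, with the only substantive input being the identification of the conformal factor $V^{2/k}$ with $V^2$.
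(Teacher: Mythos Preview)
Your proposal is correct and matches the paper's approach exactly: the paper simply states that this proposition is a direct consequence of \Cref{thm:HL}, and your argument spells out precisely that specialization (cohomogeneity $k=1$, conformal factor $V^{2/k}=V^2$, and the identification of one-dimensional minimal submanifolds with geodesics).
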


Therefore, $\Sigma$ is a minimal \emph{$2$-sphere} of revolution in $(S^3,\g_a)$ if and only if $\Sigma_{\pr}/\G$ is a geodesic with (limiting) endpoints in $\partial\Omega_a$. 

\begin{definition}\label{def:transv_orth}
A curve $\gamma\colon (t_0,t_1)\to \Omega_a$ with $\lim_{t\searrow t_0} \gamma(t)=x\in \partial \Omega_a$ is called \emph{transverse}, respectively \emph{orthogonal}, to $\partial\Omega_a$ at the 
endpoint $x$ if the limit as $t\searrow t_0$  of $\gamma'(t)/\|\gamma'(t)\|_{\check{\g}_a}$ exists and is transverse, respectively $\check{\g}_a$-orthogonal, to $T_x \partial\Omega_a$.
\end{definition}

\begin{remark}\label{rem:reduction}
A minimal $2$-sphere of revolution $\Sigma$ in $(S^3,\g_a)$ is smooth if and only if the corresponding geodesic is orthogonal to $\partial\Omega_a$ at both endpoints; we call such curves \emph{free boundary geodesics} in $\Omega_a$. Moreover, $\Sigma$ is embedded if and only if the corresponding free boundary geodesic has no self-intersections.
\end{remark}

\begin{definition}\label{def:taus}
The reflections $\tau_1$ and $\tau_4$ of $\R^4$, about the hyperplanes $x_1=0$ and $x_4=0$, are isometries of $(S^3,\g_a)$ that commute with the $\G$-action and hence descend to isometries of $\Omega_a$, that we call $\tau_\ver$ and $\tau_\hor$, respectively. The fixed point sets of $\tau_1$ and $\tau_4$ in $(S^3,\g_a)$ are the totally geodesic planar $2$-spheres
$\Sigma_1(a)$ and $\Sigma_4(a)$,
that project to the fixed point sets of $\tau_\ver$ and $\tau_\hor$ in $\Omega_a$; these are free boundary geodesics in $\Omega_a$ that we denote by $\gamma_\ver$ and $\gamma_\hor$, respectively. For convenience, we indiscriminately use the symbols $\gamma_\ver$ and $\gamma_\hor$ for the maps into $\Omega_a$ and their image. The intersection point of $\gamma_\ver$ and $\gamma_\hor$ is denoted $O:=(0,b,0)\in\Omega_a$. 
\end{definition}

\begin{remark}\label{rem:onlyplanars}
If $a\neq d$, then $\gamma_\ver$ and $\gamma_\hor$ are the only free boundary geodesics in $\Omega_a$ that correspond to \emph{planar} minimal $2$-spheres of revolution in $(S^3,\g_a)$. 
If $a=d$, rotations in the $(x_1,x_4)$-plane induce isometries on $\Omega_a$, so there is a continuous family of free boundary geodesics on $\Omega_a$ that correspond to (pairwise congruent) planar minimal $2$-spheres.
\end{remark}

\subsection{\texorpdfstring{Orthogonal geodesics}{Orthogonal geodesics}}
Since the Riemannian metric of $\Omega_a$ degenerates on $\partial\Omega_a$, existence and uniqueness of geodesics starting orthogonal to $\partial\Omega_a$ need to be properly justified. Recall that $\partial\Omega_a=\partial (S^3/\G)$, and define
\begin{equation}\label{eq:beta}
\beta\colon \R\longrightarrow\partial\Omega_a, \quad \beta(s)=(a\cos s,0,d\sin s).
\end{equation}
Let $\vec v_{a,s}=(0,1,0)\in T_{\beta(s)} (S^3/\G)$ be the orthogonal direction to $\partial \Omega_a$ at $\beta(s)$.

The free boundary geodesics $\gamma_\ver$ and $\gamma_\hor$ are trivial solutions to the initial value problem for orthogonal geodesics starting at $\beta\left(\frac{k\pi}{2}\right)$, $k\in\Z$, see Figure~\ref{fig:verhor}. We now use well-known facts about the Plateau problem to construct orthogonal geodesics starting at any $\beta(s)$, $s\in\R$, following an approach inspired by \cite[Lemma~4.1]{hnr}.
\begin{center}
\vspace{-.3cm}
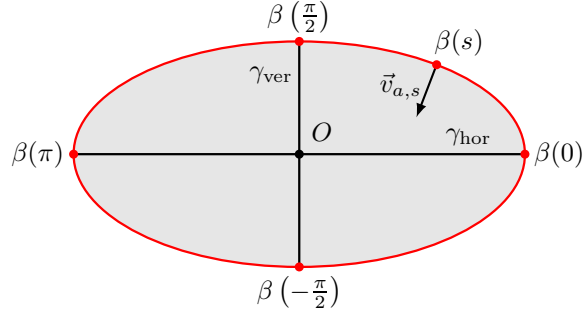
\begin{figure}[!ht]
\begin{tikzpicture}[scale=1.5]
    \filldraw[red,fill=gray!20!white,line width=0.3mm] (0,0) ellipse (2cm and 1cm);
    \draw[->,thick,-latex] (1.21752,0.793353) -- (1.03842, 0.326532);
    \draw[black, line width=0.3mm] (-2,0) -- (2,0);
    \draw[black, line width=0.3mm] (0,-1) -- (0,1);
    \draw (1.5,.15) node {${\gamma_{\hor}}$};
    \draw (-.25,.7) node {${\gamma_{\ver}}$};
    \draw (1.21752+.2,0.793353+.2) node {$\beta(s)$};
    \draw (.9,.6) node {$\vec v_{a,s}$};
    \draw (.2,.2) node {$O$};
    \draw (2,0) node [right] {$\beta(0)$};
    \draw (0,1) node [above] {$\beta\left(\frac{\pi}{2}\right)$};
    \draw (-2,0) node [left] {$\beta(\pi)$};
    \draw (0,-1) node [below] {$\beta\left(-\frac{\pi}{2}\right)$};
    \fill[color=red] (1.21752,0.793353) circle (0.04);
    \fill[color=red] (2,0) circle (0.04);
    \fill[color=red] (0,1) circle (0.04);
    \fill[color=red] (-2,0) circle (0.04);
    \fill[color=red] (0,-1) circle (0.04);
    \fill[color=black] (0,0) circle (0.04);
\end{tikzpicture} 
\caption{Schematic depiction of $\Omega_a$, 
with boundary in red, parametrized by \eqref{eq:beta}, and free boundary geodesics $\gamma_\ver$ and $\gamma_\hor$, with endpoints $\beta\left(\frac{\pi}{2}\right)$ and $\beta\left(-\frac{\pi}{2}\right)$, respectively $\beta(0)$ and $\beta(\pi)$. }\label{fig:verhor}
\end{figure}
\vspace{-.65cm}
\end{center}
\begin{theorem}\label{thm:existence_gammas}
For all $s\in\R$, there is a unique (up to reparametrization) maximal geodesic $\gamma_{a,s}$ in $\Omega_a$ starting transverse to $\partial\Omega_a$ at $\beta(s)$. Moreover, $\gamma_{a,s}$ is orthogonal to $\partial\Omega_a$ at $\beta(s)$, i.e., tangent to $\vec v_{a,s}$, and its dependence on $(a,s)$ is real analytic.
\end{theorem}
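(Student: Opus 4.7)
The plan is to circumvent the degeneracy of the conformal metric $V^2\check{\g}_a$ along $\partial\Omega_a$ by lifting the problem to $(S^3,\g_a)$, where everything is smooth and nondegenerate, via the correspondence of \Cref{prop:reduction}. Concretely, I will construct $\gamma_{a,s}$ as the image under $\Pi$ of a small $\G$-invariant, smoothly embedded minimal disk in $(S^3,\g_a)$ passing through the isolated fixed point $\beta(s)\in (S^3)^\G$, produced by solving a Plateau problem on shrinking $\G$-orbits, in the spirit of \cite[Lemma~4.1]{hnr}.

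For the existence step, I fix $(a,s)$ and, for each small $\epsilon>0$, take the $\G$-orbit $C_\epsilon\subset S^3$ through the point at $\g_a$-distance $\epsilon$ from $\beta(s)$ in a direction $\g_a$-orthogonal to $(S^3)^\G$. For $\epsilon$ sufficiently small, $C_\epsilon$ is contained in a strictly convex geodesic ball around $\beta(s)$, so classical results (e.g., Meeks--Yau) provide a unique smooth, embedded, area-minimizing disk $D_\epsilon\subset S^3$ with $\partial D_\epsilon=C_\epsilon$. By the $\G$-equivariance of the Plateau problem and the uniqueness of $D_\epsilon$, the disk $D_\epsilon$ is $\G$-invariant. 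The induced $\mathsf{SO}(2)$-action on the topological $2$-disk $D_\epsilon$ is free on $\partial D_\epsilon=C_\epsilon$, so a standard Euler-characteristic computation ($\chi(D_\epsilon)$ equals the Euler characteristic of the fixed-point set) forces the existence of exactly one interior fixed point, which must lie on $(S^3)^\G$ and, for $\epsilon$ small, must be $\beta(s)$ itself. Smoothness of $D_\epsilon$ at $\beta(s)$ then forces $T_{\beta(s)}D_\epsilon$ to coincide with the unique $\G$-invariant $2$-plane in $T_{\beta(s)}S^3$, namely the one $\g_a$-orthogonal to the axis $(S^3)^\G$. By \Cref{prop:reduction} and \Cref{rem:reduction}, the projection $\Pi(D_\epsilon\setminus\{\beta(s)\})$ is thus a geodesic segment in $\Omega_a$ emanating from $\beta(s)$ with tangent direction $\vec v_{a,s}$, which can be extended to the desired maximal geodesic $\gamma_{a,s}$ by standard ODE theory in the smooth manifold $\Omega_a\setminus\partial\Omega_a$.

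For uniqueness among transverse starting geodesics (and for the orthogonality claim), suppose $\tilde\gamma$ is another maximal geodesic at $\beta(s)$ transverse to $\partial\Omega_a$. Its lift $\Pi^{-1}(\tilde\gamma)\cap S^3_\pr$ is a smooth $\G$-invariant minimal surface in $S^3_\pr$ whose closure passes through $\beta(s)$. Allard-type regularity at an interior point together with the $\G$-invariant tangent plane argument above forces this closure to be smooth at $\beta(s)$ and tangent to the plane $\g_a$-orthogonal to $(S^3)^\G$. Local uniqueness of Plateau solutions near $\beta(s)$ then identifies this closure with $D_\epsilon$ in a neighborhood of $\beta(s)$, so $\tilde\gamma$ agrees with $\gamma_{a,s}$ up to reparametrization near $\beta(s)$, and hence everywhere on their common interval of definition. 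In particular, every transverse starting geodesic is automatically orthogonal.

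Finally, real analytic dependence on $(a,s)$ reduces to the analogous dependence for $D_\epsilon=D_\epsilon(a,s)$: since the ambient metrics $\g_a$ and the boundary curves $C_\epsilon(a,s)$ depend real analytically on $(a,s)$, so do the Plateau solutions, by standard elliptic theory for the minimal surface equation with real analytic data. Once $\gamma_{a,s}(t)$ has entered the interior of $\Omega_a$, analyticity is preserved by the geodesic flow of the real analytic metric $V^2\check{\g}_a$. I expect the principal obstacle to be precisely the degeneracy $V=0$ on $\partial\Omega_a$, which makes the geodesic ODE in $\Omega_a$ singular at $\beta(s)$ and obstructs a direct ODE proof; the device of lifting to the smooth ambient problem in $(S^3,\g_a)$ is what resolves this, by converting the singular ODE problem into a nonsingular elliptic boundary value problem on $D_\epsilon$.
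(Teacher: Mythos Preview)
Your approach is essentially the same as the paper's: lift to $(S^3,\g_a)$, solve a Plateau problem with a $\G$-orbit as boundary, use uniqueness of the area-minimizing disk to get $\G$-invariance, locate the interior fixed point, read off orthogonality from the isotropy representation, and handle uniqueness by extending the lifted punctured disk across the fixed point and comparing tangent planes.

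There is one small gap. Your claim that the interior fixed point of $D_\epsilon$ ``for $\epsilon$ small, must be $\beta(s)$ itself'' is not justified: $(S^3)^\G$ is a curve, so every small ball around $\beta(s)$ contains an arc of fixed points, and there is no symmetry of the Plateau data forcing the fixed point to land exactly at $\beta(s)$. The paper avoids this by running the construction in the opposite direction: it first chooses a point $p\in\Omega_a$ near $\partial\Omega_a$, solves Plateau for the orbit $\Pi^{-1}(p)$, and lets the resulting fixed point \emph{define} $s$; surjectivity in $s$ is then obtained by letting $p$ range over a collar neighborhood of $\partial\Omega_a$. Your argument is easily repaired the same way. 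For the uniqueness step the paper invokes the removable singularity result of Choi--Schoen \cite[Prop.~1]{choi-schoen} and the maximum principle (two minimal disks tangent at an interior point coincide), and for real analytic dependence it cites White~\cite{white87,white2}; your appeals to ``Allard-type regularity,'' ``local uniqueness of Plateau solutions,'' and ``standard elliptic theory'' point in the right direction but are less precise than what is actually needed here.
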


\begin{proof}
Let $p\in \Omega_a$ be sufficiently close to $\partial \Omega_a$, so that the $\G$-orbit $\Pi^{-1}(p)$ is an \emph{extremal} curve in $(S^3,\g_a)$, i.e., lies in the boundary of a convex set. Recall that $\Pi^{-1}(p)$ is a circle, which is real analytic and null-homotopic. Thus,
the (area-minimizing) disk solution $D_p$ to the Plateau problem in $(S^3,\g_a)$ with contour $\partial D_p=\Pi^{-1}(p)$ exists, is embedded and smooth up to the boundary, and unique, see e.g.~\cite{white-notes}. In particular, $D_p$ is $\G$-invariant, and hence must contain a fixed point $x\in D_p\cap (S^3)^\G$. Let $s\in \R$ be so that $\Pi(x)=\beta(s)$, and $\gamma_{a,s}\colon (0,\varepsilon)\to\Omega_a$ parametrize the curve $\Pi(D_p\setminus\{x\})\subset\Omega_a$, with $\lim_{t\searrow0}\gamma_{a,s}(t)=\beta(s)$. By Proposition~\ref{prop:reduction}, $\gamma_{a,s}$ is a geodesic. Moreover, the isotropy $\G$-representation on $T_x S^3$ splits as a direct sum of irreducibles $T_x D_p$ and $T_x (S^3)^\G$, which are hence $\g_a$-orthogonal. 
The $\G$-orbit space of the unit sphere in $T_x D_p=(T_x (S^3)^\G)^\perp$ is the $\check{\g}_a$-normal direction to $(S^3)^\G=\partial\Omega_a$ at $\beta(s)$, and hence $\gamma_{a,s}$ is orthogonal to $\partial\Omega_a$.

The preimage under $\Pi$ of any geodesic in $\Omega_a$ that starts transverse to $\partial\Omega_a$ at $\beta(s)$ is a properly embedded minimal punctured disk in $(S^3\setminus\{x\},\g_a)$. By standard removable singularity results, see e.g.~\cite[Prop~1]{choi-schoen}, this minimal punctured disk extends to a smooth minimal disk tangent to $D_p$ at $x$, which must hence agree with $D_p$ by the maximum principle. Therefore, up to reparametrization, the maximal extension of $\gamma_{a,s}$ is the unique geodesic in $\Omega_a$ with the above properties, where we identify $\gamma_{a,s}$ and $\gamma_{a,s+2\pi k}$ for all $k\in\Z$, since \eqref{eq:beta} is $2\pi$-periodic.

The real analytic dependence of $\gamma_{a,s}$ on $(a,s)$ is a consequence of real analytic dependence of the area-minimizing disk $D_p$ on the contour $\Pi^{-1}(p)$, which follows from results of White~\cite[Thm.~3.1]{white87} and \cite{white2}. The nondegeneracy assumption in \cite[Thm.~3.1]{white87}, i.e., the absence of nontrivial Jacobi fields vanishing on the boundary of the minimal disk, is satisfied if $p$ is sufficiently close to $\partial\Omega_a$, which corresponds to $\Pi^{-1}(p)$ being sufficiently small. Thus, moving $p$ around a collar neighborhood of $\partial\Omega_a$, we see this construction covers all points $\beta(s)$, $s\in\R$.
\end{proof}

\begin{remark}
Existence of orthogonal geodesics on certain $2$-disks $\Omega=M_\pr/\G$ with a Riemannian metric $V^{2/k}\check \g$, as in Theorem~\ref{thm:HL}, is proven in Hsiang--Hsiang~\cite{hsiang-hsiang} and Hsiang~\cite{hsiang2,hsiang1}, among others, by reducing the geodesic problem from an ODE system consisting of two coupled equations to a single ODE. However, such a reduction is not possible in Theorem~\ref{thm:existence_gammas}, because of two key differences.
Namely, the $2$-disks $M_\pr/\G$ in the above references have a nontrivial Killing field, since $\G$ has dimension strictly smaller than that of the full isometry group of $(M,\g)$,  and
 $V\colon\Omega\to\R$ is constant on levelsets of the distance function to $\partial \Omega$. Neither of these conditions hold in the setting of Theorem~\ref{thm:existence_gammas}, except for the special case $a=d$.
\end{remark}

\begin{remark}\label{rem:symm}
In addition to the $2\pi$-periodicity, other symmetries arise from the uniqueness statement in Theorem~\ref{thm:existence_gammas} via the reflections $\tau_\ver$ and $\tau_\hor$ across $\gamma_\ver$ and $\gamma_\hor$. Namely, up to reparametrization, $\gamma_{a,s}=\gamma_{a,{-s}}$ and $\gamma_{a,s}=\gamma_{a,{\pi-s}}$. Moreover, $\gamma_{a,\pm\frac{\pi}{2}}$ clearly coincide with $\gamma_\ver$, while $\gamma_{a,0}$ and $\gamma_{a,\pi}$ coincide with $\gamma_\hor$.
\end{remark}

Due to the above symmetries, we shall only consider $\gamma_{a,s}$ where $s$ is in the interval
\begin{equation*}
\mathcal I:=\left(-\tfrac{\pi}{2},\tfrac{\pi}{2}\right).
\end{equation*}
It is convenient to parametrize $\gamma_{a,s}$ by arclength with respect to the quotient metric $\check{\g}_a$, instead of the (conformal) metric $V^2\, \check{\g}_a$ of $\Omega_a$. We denote by $\rho$ this $\check{\g}_a$-arclength parameter, and by $(0,\ell_{a,s})$ the maximal domain of definition for $\gamma_{a,s}(\rho)$.

\begin{proposition}\label{prop:arrival_time}
All geodesics $\gamma_{a,s}$, with $s\in\mathcal I$, intersect $\gamma_\ver$ transversely. In particular, there exists a real analytic function $T_a\colon\mathcal I\to (0,\ell_{a,s})$ such that  the first intersection point is $\gamma_{a,s}(T_a(s))\in \gamma_\ver$, and $\gamma_{a,s}'(T_a(s))$ is transverse to~$\gamma_\ver$.
\end{proposition}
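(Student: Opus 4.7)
The plan is, in order: (i) show transversality of any intersection of $\gamma_{a,s}$ with $\gamma_\ver$; (ii) show such an intersection exists in finite time; and (iii) deduce real analyticity of the first-intersection time $T_a$. Parts (i) and (iii) will be short. For (i), any tangential meeting of $\gamma_{a,s}$ and $\gamma_\ver$ at a common point with a common tangent direction would, by the uniqueness in \Cref{thm:existence_gammas}, force $\gamma_{a,s}\equiv\gamma_\ver$, contradicting the distinct starting boundary points $\beta(s)\neq\beta(\pm\pi/2)$ for $s\in\mathcal I$. For (iii), with transversality in hand, \Cref{thm:existence_gammas} supplies the real-analytic dependence of $\gamma_{a,s}(\rho)$ on $(a,s,\rho)$, and the real-analytic implicit function theorem applied to the equation $x_1(\gamma_{a,s}(\rho))=0$ yields real analyticity of $T_a$.

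The substance lies in part (ii). The case $s=0$ is trivial since $\gamma_{a,0}=\gamma_\hor$ meets $\gamma_\ver$ transversely at $O$. For $s\in\mathcal I\setminus\{0\}$, we exploit the symmetry $\gamma_{a,s}=\gamma_{a,-s}$ from \Cref{rem:symm}: matching starting points of one curve with terminal limits of the other yields $\lim_{\rho\nearrow\ell_{a,s}}\gamma_{a,s}(\rho)=\beta(-s)$, so $\gamma_{a,s}$ is a finite, $\tau_\hor$-invariant geodesic arc from $\beta(s)$ to $\beta(-s)$. Suppose toward contradiction that this arc avoids $\gamma_\ver$. By part (i) it then lies in the closed half-disk $\overline{\Omega_a^+}$, where $\Omega_a^\pm:=\Omega_a\cap\{\pm x_1>0\}$, with $x_1>0$ strictly along it; its $\Pi$-preimage, completed with the two $\G$-fixed points $\beta(\pm s)$, is a smooth embedded closed minimal $2$-sphere $\Sigma\subset E(a,b,b,d)\cap\{x_1\geq0\}$ on which $x_1>0$.

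We apply the maximum principle. Writing $F(x)=x_1^2/a^2+x_2^2/b^2+x_3^2/b^2+x_4^2/d^2$ so that $E(a,b,b,d)=F^{-1}(1)$, strict convexity forces the second fundamental form $\mathrm{II}_E$ of $E(a,b,b,d)\hookrightarrow\R^4$ to be positive definite, so for the minimal surface $\Sigma\subset(E(a,b,b,d),\g_a)$ the $\R^4$-mean curvature vector is $\vec H_\Sigma^{\R^4}=h(x)\,\nu_E(x)$ with outward unit normal $\nu_E=\nabla F/|\nabla F|$ and $h(x)=\mathrm{tr}_{T\Sigma}\mathrm{II}_E>0$. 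Using $\Delta_\Sigma x_1=2\langle\vec H_\Sigma^{\R^4},e_1\rangle$ then yields
\begin{equation*}
\Delta_\Sigma x_1 \,=\, c(x)\,x_1, \qquad c(x)>0 \text{ on } \Sigma.
\end{equation*}
Integrating over the closed $\Sigma$ gives $0=\int_\Sigma\Delta_\Sigma x_1\,dA=\int_\Sigma c\cdot x_1\,dA>0$, a contradiction. The main obstacle is thus part (ii): the crucial inputs are the symmetry $\gamma_{a,s}=\gamma_{a,-s}$ (ensuring $\gamma_{a,s}$ is a finite arc and bypassing any trapped-in-interior scenarios) together with the maximum-principle computation on strictly convex ellipsoids (to rule out closed minimal $2$-spheres strictly contained in one hemi-ellipsoid).
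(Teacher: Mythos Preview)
Your argument has a genuine gap in part (ii). The symmetry you invoke from \Cref{rem:symm} does \emph{not} say that $\gamma_{a,s}$ and $\gamma_{a,-s}$ coincide as sets; what the reflection $\tau_\hor$ actually gives is $\tau_\hor\circ\gamma_{a,s}=\gamma_{a,-s}$ as parametrized curves. These two geodesics need not have the same image, so you cannot ``match the starting point of one with the terminal limit of the other'' to conclude that $\gamma_{a,s}$ ends at $\beta(-s)$. A concrete counterexample: when $a=d$, rotations in the $(x_1,x_4)$-plane are isometries of $\Omega_a$, and $\gamma_{a,s}$ is the rotated image of $\gamma_\hor$, an arc from $\beta(s)$ through $O$ to $\beta(s+\pi)$. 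For generic $s\in\mathcal I$ this is \emph{not} $\tau_\hor$-invariant, and its terminal point $\beta(s+\pi)$ lies in $\{x_1<0\}$, not at $\beta(-s)\in\{x_1>0\}$.

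More fundamentally, your maximum-principle step needs $\Sigma=\Pi^{-1}(\gamma_{a,s})$ (completed with the fixed points) to be a \emph{closed} minimal surface, which requires knowing that $\gamma_{a,s}$ actually terminates at some boundary point as $\rho\nearrow\ell_{a,s}$. Since the metric $V^2\check\g_a$ degenerates on $\partial\Omega_a$, this is not automatic and is nowhere established before \Cref{prop:arrival_time}. The paper's own Remark immediately following the proposition flags exactly this: a Frankel-type argument (of which your maximum-principle computation is a variant) would go through \emph{if} one assumed both that the terminal limit exists and that $\gamma_{a,s}$ has no self-intersections; neither is available at this point. The paper sidesteps both issues with an open--closed connectedness argument on the set of $s$ for which a transverse intersection with $\gamma_\ver$ occurs, using only continuity of $s\mapsto\gamma_{a,s}$ and uniqueness of geodesics to rule out tangential limits.
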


\begin{proof}
Let $F\subset \mathcal I$ be a closed subinterval with $0\in F$, and let $E$ be the subset of $s\in F$ such that $\gamma_{a,s}$ intersects $\gamma_\ver$ transversely. Clearly, $E$ is nonempty as $0\in E$, and open in $F$ by continuity of $s\mapsto \gamma_{a,s}$, see Theorem~\ref{thm:existence_gammas}. Moreover, we claim that $E$ is closed in $F$, and hence $E=F$. Indeed, if $s_n\in E$ is a sequence with $s_n\to s_\infty\in F$ but $s_\infty\notin E$, then $\gamma_{s_\infty,a}$ intersects $\gamma_\ver$ but not transversely, hence tangentially. Such a tangential intersection cannot occur in the interior of $\Omega_a$, as it would contradict uniqueness of geodesics with the same initial condition, and nor at $\partial\Omega_a$, as it would contradict the uniqueness statement in Theorem~\ref{thm:existence_gammas}. Thus $s_\infty\in E$, proving the claim.
Since $F$ is an arbitrary closed subinterval of $\mathcal I$, every $\gamma_{a,s}$, $s\in\mathcal I$, intersects $\gamma_\ver$ transversely. The existence and regularity of the function $T_a\colon\mathcal I\to (0,\ell_{a,s})$ as stated now follows from the Implicit Function Theorem.
\end{proof}

\begin{remark}
If we assume that $\gamma_{a,s}\colon (0,\ell_{a,s})\to\Omega_a$ has no self-intersections and the limit of $\gamma_{a,s}(\rho)$ as $\rho\nearrow\ell_{a,s}$ exists, then the claim that $\gamma_{a,s}$ intersects $\gamma_\ver$ transversely follows from Frankel's Theorem. Indeed, by maximality of $\ell_{a,s}$, the geodesic $\gamma_{a,s}(\rho)$ converges to a point of $\partial\Omega_a$ as $\rho\nearrow\ell_{a,s}$, and arrives to $\partial\Omega_a$ transversely, hence orthogonally by Theorem~\ref{thm:existence_gammas}.
By Remark~\ref{rem:reduction}, the preimage under $\Pi$ of any two free boundary geodesics without self-intersections (such as $\gamma_\ver$, or $\gamma_\hor$, and this putative $\gamma_{a,s}$) are embedded minimal $2$-spheres in the positively curved manifold $(S^3,\g_a)$, which hence intersect. They do so along principal $\G$-orbits and transversely (by the maximum principle), so their images also intersect transversely in $\Omega_a$.
\end{remark}

\begin{proposition}\label{prop:embedded}
For all $s\in\mathcal I$, the restriction of $\gamma_{a,s}\colon (0,\ell_{a,s})\to\Omega_a$ to the interval $(0,T_a(s)]\subset (0,\ell_{a,s})$ has no self-intersections.
\end{proposition}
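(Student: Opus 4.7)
The plan is to argue by a connectedness argument on $s$, in the same spirit as the proof of \Cref{prop:arrival_time}. Define
\begin{equation*}
S_a := \bigl\{s \in \mathcal I : \gamma_{a,s}|_{(0, T_a(s)]} \text{ has no self-intersections}\bigr\},
\end{equation*}
and show that $S_a$ is nonempty, open, and closed in $\mathcal I$, hence equal to $\mathcal I$. Nonemptiness is immediate: $0 \in S_a$, since by \Cref{rem:symm} the geodesic $\gamma_{a,0}$ parametrizes the arc of $\gamma_\hor$ from $\beta(0)$ to the transverse crossing point $O = \gamma_\hor \cap \gamma_\ver$, which is manifestly embedded.

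For openness at $s_0 \in S_a$, I would split $(0, T_a(s_0)]$ into an initial segment $(0, \varepsilon]$ and a compact segment $[\varepsilon, T_a(s_0)]$, for some small fixed $\varepsilon > 0$. On the initial segment, the Plateau construction from the proof of \Cref{thm:existence_gammas} realizes $\Pi^{-1}(\gamma_{a,s}|_{(0, \varepsilon]})$ as a subset of an embedded minimal disk in $(S^3, \g_a)$ for every $s$ near $s_0$, provided $\varepsilon$ is chosen uniformly small, so $\gamma_{a,s}|_{(0, \varepsilon]}$ is injective. On the compact segment, embeddedness of $\gamma_{a,s_0}|_{[\varepsilon, T_a(s_0)]}$ yields a positive lower bound on the $\check{\g}_a$-distance between images of pairs with parameter difference at least a fixed $\delta > 0$, and this bound persists under the real-analytic dependence of $\gamma_{a,s}$ on $s$ from \Cref{thm:existence_gammas} and the continuity of $T_a$ from \Cref{prop:arrival_time}; pairs with smaller parameter difference are controlled by the $\check{\g}_a$-arclength parametrization. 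The two segments remain mutually disjoint for $s$ near $s_0$ by the same compactness argument.

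For closedness, let $s_n \in S_a$ converge to $s_\infty \in \mathcal I$ and assume, toward a contradiction, that $\gamma_{a,s_\infty}(\rho_1) = \gamma_{a,s_\infty}(\rho_2) =: p$ for some $0 < \rho_1 < \rho_2 \leq T_a(s_\infty)$. The equality $\rho_2 = T_a(s_\infty)$ would place $p \in \gamma_\ver$ and yield an earlier hit time $\rho_1$, contradicting the minimality of $T_a(s_\infty)$; hence $\rho_2 < T_a(s_\infty)$. The self-intersection at $p$ must then be transverse: if $\gamma_{a,s_\infty}'(\rho_1) = \gamma_{a,s_\infty}'(\rho_2)$, geodesic ODE uniqueness would render $\gamma_{a,s_\infty}$ periodic with period $\rho_2 - \rho_1 \in (0, \ell_{a,s_\infty})$ and force $\gamma_{a,s_\infty}(\rho_2 - \rho_1) = \beta(s_\infty) \in \partial\Omega_a$, contradicting the fact that $\gamma_{a,s_\infty}$ takes values in $\Omega_a = S^3_{\pr}/\G$ throughout $(0, \ell_{a,s_\infty})$; and if $\gamma_{a,s_\infty}'(\rho_1) = -\gamma_{a,s_\infty}'(\rho_2)$, the same uniqueness principle yields $\gamma_{a,s_\infty}(\rho_1 + t) = \gamma_{a,s_\infty}(\rho_2 - t)$, whose differentiation at $t = (\rho_2-\rho_1)/2$ gives $\gamma_{a,s_\infty}'((\rho_1+\rho_2)/2) = 0$, incompatible with an arclength parametrization. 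Transverse self-intersections being $C^1$-stable, for $n$ large the curve $\gamma_{a,s_n}$ must have a transverse self-intersection at times near $(\rho_1, \rho_2)$, which remain in $(0, T_a(s_n)]$ by continuity of $T_a$, contradicting $s_n \in S_a$. The principal subtlety I anticipate is the clean dispatch of the tangential intersection cases; beyond the elementary velocity argument, this relies on the fact, built into the definition of $\ell_{a,s_\infty}$, that $\gamma_{a,s_\infty}$ stays in the interior of $\Omega_a$ throughout its maximal domain.
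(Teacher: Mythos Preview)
Your proof is correct and follows the same connectedness strategy, but with the roles reversed: you work with the ``good'' set $S_a$ of parameters giving embedded arcs, whereas the paper works with its complement $S$, the set of $s$ for which $\gamma_{a,s}|_{(0,T_a(s)]}$ \emph{does} self-intersect, and shows $S$ is open, closed, and misses $0$. This swap interchanges which of openness and closedness is the delicate direction. Your closedness argument for $S_a$ matches the paper's openness of $S$ essentially verbatim (transverse self-intersections are stable under perturbation). Your openness argument for $S_a$, however, is considerably more laborious than the paper's corresponding closedness of $S$: the paper simply extracts limiting intersection times $\rho_n<\rho_n'\to\rho_\infty\le\rho_\infty'$ and rules out $\rho_\infty=\rho_\infty'$ by observing that equality at a positive value would force arbitrarily short geodesic loops in the Riemannian $2$-disk $\Omega_a$, which cannot exist. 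That one-line injectivity-radius argument replaces your entire Plateau-disk-plus-compactness decomposition of $(0,T_a(s_0)]$ into initial and compact segments. Both routes are valid; the paper's choice of complement buys a substantially shorter proof.
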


\begin{proof}
Let $S$ be the subset of $s\in \mathcal I$ such that the restriction of $\gamma_{a,s}$ to $(0,T_a(s)]$ \emph{has} a self-intersection. 
Any such self-intersections must occur inside $\Omega_a$, for otherwise $\gamma_{a,s}$ would have returned to $\partial\Omega_a$ before intersecting $\gamma_\ver$, contradicting maximality of $\gamma_{a,s}$ and Proposition~\ref{prop:arrival_time}. Thus, these self-intersections are transverse, and hence stable under small perturbations, so $S\subset \mathcal I$ is open by Theorem~\ref{thm:existence_gammas}. Moreover, we claim $S\subset\mathcal I$ is closed. Indeed, suppose $s_n\in S$ is a sequence with $s_n\to s_\infty$, and let $\rho_n<\rho'_n$ be sequences such that $\gamma_{a,s_n}(\rho_n)=\gamma_{a,s_n}(\rho'_n)$. Up to reparametrizing $\gamma_{a,s_n}$, assume that $\rho_n\to \rho_\infty$ and $\rho'_n\to \rho'_\infty$.
One cannot have $\rho_\infty=\rho'_\infty=0$, since any self-intersections must occur inside $\Omega_a$; nor $\rho_\infty=\rho'_\infty>0$, as that would imply the existence of arbitrarily short geodesic loops in the Riemannian $2$-disk $\Omega_a$. Thus, $\rho_\infty < \rho'_\infty$, i.e., $s_\infty\in S$, proving the above claim. Therefore, $S\subset\mathcal I$ is open and closed, so either $S=\mathcal I$ or $S=\emptyset$. The geodesic $\gamma_{a,0}=\gamma_\hor$ has no self-intersections in $(0,T_a(0)]$, hence $0\notin S$, so $S=\emptyset$.
\end{proof}

\subsection{Even and odd geodesics}\label{subsec:even-odd}
For $a>0$, $s\in\mathcal I$, and $z\in [0,1)$, let
\begin{equation}\label{eq:sigmas}
\begin{aligned}
\sigma(a,s,z) &:=\phantom{:} \gamma_{a,s}\big(t_a(z)\,T_a(s)\big), \\
\sigma(a,s,z) &\phantom{:}=\phantom{:} \big(\sigma_{x_1}(a,s,z),\, \sigma_r(a,s,z),\, \sigma_{x_4}(a,s,z)\big)\in \Omega_a,
\end{aligned}
\end{equation}
be the reparametrization of $\gamma_{a,s}\colon (0,\ell_{a,s})\to\Omega_a$, where $T_a\colon \mathcal I\to (0,\ell_{a,s})$ is given by Proposition~\ref{prop:arrival_time}, and $t_a\colon [0,1)\to (0,1]$ is the decreasing real analytic function so that $\gamma_\hor$, parametrized as $z\mapsto\gamma_{a,0}(t_a(z)T_a(0))$, has $x_1$-coordinate affine in $z$.

In other words, the above choices defining \eqref{eq:sigmas} are such that 
$\sigma(a,s,0)\in\gamma_\ver$ for all $a>0$ and $s\in\mathcal I$, and 
$\sigma_{x_1}(a,0,z)=az$ for all $a>0$ and $z\in [0,1)$. Note that, from Theorem~\ref{thm:existence_gammas} and Proposition~\ref{prop:arrival_time}, the map \eqref{eq:sigmas} is real analytic.

\begin{definition}\label{def:fevenfodd}
Using the notation above, define the real analytic functions
\begin{equation*}
\begin{aligned}
&f_{\even}\colon (0,+\infty)\times \mathcal I \longrightarrow\R,
& \qquad &f_{\odd}\colon (0,+\infty)\times \mathcal I \longrightarrow\R, \\
&f_\even(a,s) = \tfrac{\partial }{\partial z}\sigma_{x_4}(a,s,z)\big|_{z=0}, & 
 &f_\odd(a,s) = \sigma_{x_4}(a,s,0).
\end{aligned}
\end{equation*}
The geodesic $\gamma_{a,s}\colon (0,\ell_{a,s})\to\Omega_a$ is called \emph{even}
if $f_\even(a,s)=0$, and it is called \emph{odd} if $f_\odd(a,s)=0$.
\end{definition}

Note that the geodesic $\gamma_{a,s}$ is even if and only if it intersects $\gamma_\ver$ orthogonally, and odd if and only if it intersects $\gamma_\ver$ at the point $O$, see Figure~\ref{fig:even-odd}.

\begin{remark}\label{rem:feven-even_fodd-odd}
It follows from Remark~\ref{rem:symm} that, for fixed $a>0$, the functions $f_\even$ and $f_\odd$ are even and odd, respectively, as functions of $s\in\mathcal I$, i.e.,
\begin{equation}\label{eq:evenfeven-oddfodd}
f_\even(a,-s)=f_\even(a,s) \quad\text{ and }\quad f_\odd(a,-s)=-f_\odd(a,s).
\end{equation}
In particular, $f^{-1}(0)\subset (0,+\infty)\times \mathcal I$, for both $f=f_\even$ and $f_\odd$, are invariant under the reflection $(a,s)\mapsto(a,-s)$, see also Remarks~\ref{rem:zeros-evenodd} and \ref{rem:zeros-evenodd2}.
\end{remark}

\begin{proposition}\label{prop:even_odd}
If the geodesic $\gamma_{a,s}\colon (0,\ell_{a,s})\to\Omega_a$, $s\in\mathcal I$, is either even or odd, 
then $\Pi^{-1}(\gamma_{a,s})$ is a smooth embedded minimal $2$-sphere in $(S^3,\g_a)$. If $a\neq d$, then this minimal $2$-sphere is planar if and only if $s=0$.
\end{proposition}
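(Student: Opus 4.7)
The plan is to extend the maximal geodesic $\gamma_{a,s}$ via the reflective symmetries of $\Omega_a$ into a complete free boundary geodesic $\tilde\gamma_{a,s}$, and then invoke Remark~\ref{rem:reduction}. Set $\alpha := \gamma_{a,s}|_{[0,T_a(s)]}$, the half-arc from $\beta(s)\in\partial\Omega_a$ to the first crossing point $Q := \gamma_{a,s}(T_a(s)) \in \gamma_\ver$ provided by Proposition~\ref{prop:arrival_time}.

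In the even case, $\alpha$ meets $\gamma_\ver$ orthogonally at $Q$. Since $\tau_\ver$ is an isometry pointwise fixing $\gamma_\ver$, and its differential at $Q$ acts as $-\Id$ on the normal direction to $\gamma_\ver$ inside $T_Q\Omega_a$ (which is precisely the direction of $\alpha'(T_a(s))$, by orthogonality), the geodesic arc $\tau_\ver(\alpha)$ approaches $Q$ with velocity opposite to that of $\alpha$ at $Q$. Uniqueness of geodesics then identifies the maximal extension of $\gamma_{a,s}$ past $Q$ with the reverse of $\tau_\ver(\alpha)$, yielding a smooth geodesic $\tilde\gamma_{a,s}$ from $\beta(s)$ to $\tau_\ver(\beta(s)) = \beta(\pi-s)$, orthogonal to $\partial\Omega_a$ at both endpoints by isometric covariance of the orthogonality condition. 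In the odd case, $Q$ coincides with $O = \gamma_\ver\cap\gamma_\hor$, and I would instead use the isometry $R := \tau_\hor\circ\tau_\ver$, whose differential at $O$ restricts to $-\Id$ on all of $T_O\Omega_a$; the same velocity-matching argument produces a smooth geodesic $\tilde\gamma_{a,s}$ from $\beta(s)$ to $R(\beta(s)) = \beta(s-\pi)$, orthogonal at both endpoints. This is consistent with Remark~\ref{rem:symm}.

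For embeddedness, Proposition~\ref{prop:arrival_time} gives $\alpha\cap\gamma_\ver = \{Q\}$; since $x_1(\beta(s)) = a\cos s$ has constant nonzero sign on $\mathcal I$, the arc $\alpha$ is contained in one closed half of $\Omega_a$ cut out by $\gamma_\ver = \{x_1=0\}$, touching $\gamma_\ver$ only at $Q$. Its image $\tau_\ver(\alpha)$ (respectively $R(\alpha)$) lies in the complementary half, again touching $\gamma_\ver$ only at $Q$, so the two halves of $\tilde\gamma_{a,s}$ intersect solely at $Q$. Combined with embeddedness of $\alpha$ from Proposition~\ref{prop:embedded}, this shows $\tilde\gamma_{a,s}$ is embedded, and Remark~\ref{rem:reduction} then yields that $\Pi^{-1}(\tilde\gamma_{a,s})$ is a smooth embedded minimal $2$-sphere in $(S^3,\g_a)$.

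For the last assertion, suppose $a\neq d$. By Remark~\ref{rem:onlyplanars}, the only planar minimal $2$-spheres of revolution correspond to $\gamma_\ver$ or $\gamma_\hor$, and by uniqueness in Theorem~\ref{thm:existence_gammas}, $\gamma_{a,s}$ coincides with one of these precisely when $\beta(s)$ lies on it, which on $\mathcal I$ forces $s=0$ (giving $\gamma_\hor$ and hence $\Sigma_4(a)$). I expect the most delicate point in the plan to be the $C^\infty$ matching of the two halves at the gluing point $Q$: the $-\Id$ identification of velocities is crisp, but it hinges on orthogonality at $Q$ (even case) or on $Q=O$ (odd case) holding \emph{exactly} — which are precisely the vanishing conditions encoded by $f_\even$ and $f_\odd$ in Definition~\ref{def:fevenfodd}.
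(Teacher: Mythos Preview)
Your proof is correct and follows essentially the same approach as the paper: use the reflection $\tau_\ver$ (even case) or the half-turn $\tau_\ver\circ\tau_\hor$ (odd case) to extend the half-arc to a free boundary geodesic, invoke \Cref{prop:embedded} for embeddedness, and appeal to \Cref{rem:reduction,rem:onlyplanars} for the conclusions. Your write-up is more explicit than the paper's on two points the paper leaves implicit---the velocity matching at $Q$ and the fact that the two reflected halves lie in opposite $\{x_1\gtrless 0\}$ regions and hence meet only at $Q$---but the underlying argument is the same.
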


\begin{proof}
If $\gamma_{a,s}$ is even, then it intersects $\gamma_\ver$ orthogonally, hence is mapped to itself by the reflection $\tau_\ver$. Similarly, if $\gamma_{a,s}$ is odd, then it intersects $\gamma_\ver$ at $O$, and hence is mapped to itself by the isometry $\tau_\ver\circ\tau_\hor$.
In both cases, $\gamma_{a,s}$ is a free boundary geodesic in $\Omega_a$, i.e., meets $\partial\Omega_a$ orthogonally at both endpoints, and has no self-intersections by Proposition~\ref{prop:embedded}. The stated conclusions now follow from Proposition~\ref{prop:reduction}, see also Remarks~\ref{rem:reduction} and \ref{rem:onlyplanars}.
\end{proof}

\vspace{-.2cm}
\begin{center}
\begin{figure}[!ht]
\begin{tikzpicture}[scale=1.5]
    \filldraw[red,fill=gray!20!white,line width=0.3mm] (0,0) ellipse (2cm and 1cm);
    \draw[black, line width=0.3mm] (-2,0) -- (2,0);
    \draw[black, line width=0.3mm] (0,-1) -- (0,1);
    \draw (-.25,.7) node {${\gamma_{\ver}}$};
    \draw[-,black!40!green,thick] (0,0) to[out=-45,in=-111] (1.21752,0.793353);
    \draw[-,black!40!green,thick] (0,0) to[out=135,in=69] (-1.21752,-0.793353);
    \draw[-,blue,thick] (0,-.4) to[out=0,in=-99.51] (0.635961, 0.948097);
    \draw[-,blue,thick] (0,-.4) to[out=180,in=-80.48] (-0.635961, 0.948097);
    \draw (.2,.2) node {$O$};
    \fill[color=black] (0,0) circle (0.04);
    \draw[red,line width=0.3mm] (0,0) ellipse (2cm and 1cm);
\end{tikzpicture}
\caption{Schematic representation of an even geodesic (blue), and an odd geodesic (green) in $\Omega_a$.}\label{fig:even-odd}
\end{figure}
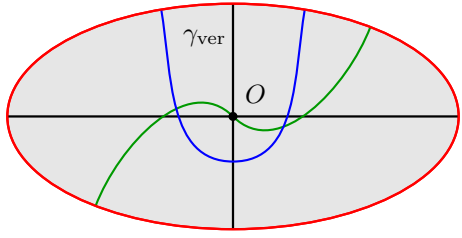
\end{center}
\vspace{-.7cm}

\begin{remark}\label{rem:freebdy}
Let $(S^3_+,\g_a)$ be the ellipsoidal hemisphere consisting of the points of $(S^3,\g_a)$ with $x_1\geq 0$. Clearly, $(S^3_+,\g_a)$ has totally geodesic boundary $\Sigma_1(a)$. Since an even geodesic $\gamma_{a,s}$ intersects $\gamma_\ver$ orthogonally at some $p\in\Omega_a$, the corresponding minimal $2$-sphere $\Pi^{-1}(\gamma_{a,s})$ also intersects $\Pi^{-1}(\gamma_\ver)=\Sigma_1(a)$ \emph{orthogonally} along the $\G$-orbit of $\Pi^{-1}(p)$ in $(S^3,\g_a)$. Thus, $\Pi^{-1}(\gamma_{a,s})\cap S^3_+$ is a \emph{free boundary minimal $2$-disk} in the ellipsoidal hemisphere $(S^3_+,\g_a)$.
\end{remark}

Since $\sigma(a,0,z)$, $z\in[0,1)$, parametrizes the horizontal geodesic $\gamma_\hor$, we have that $\sigma_{x_4}(a,0,z)\equiv 0$. In other words, $\gamma_{a,0}$ is (the only geodesic $\gamma_{a,s}$, $s\in\mathcal I$, to be) simultaneously even and odd, i.e.,
\begin{equation}\label{eq:trivial_branch}
f_\even(a,0)=f_\odd(a,0)=0, \qquad \text{ for all } a>0.
\end{equation}
Thus, we have a trivial branch of solutions $\mathcal B_{\triv}$, exactly as in \eqref{eq:btriv}, to the equation $f(a,s)=0$, with $f=f_\even$ or $f_\odd$. 
By finding solutions $(a,s)\in (0,+\infty)\times \mathcal I$ that bifurcate from $\mathcal B_{\triv}$ as $a\nearrow+\infty$, we shall find even and odd geodesics $\gamma_{a,s}$, $s\neq0$, and hence nonplanar minimal $2$-spheres in $(S^3,\g_a)$ by Proposition~\ref{prop:even_odd}.

\section{Jacobi equation of the trivial (planar) solution}\label{sec:jacobi}

In this section, we compute the Jacobi equation 
of the planar $2$-sphere $\Sigma_4(a)\subset E(a,b,b,d)$, which corresponds to the 
geodesic $\gamma_\hor$ in $\Omega_a$, in the notation of the previous section, and write it as a Sturm--Liouville ODE. 
This is key to locate all degeneracies along the trivial branch \eqref{eq:trivial_branch}, since the linearizations of $f_\even$ and $f_\odd$ at $s=0$ are determined, respectively, by the endpoint values (corresponding to the intersection with $\gamma_\ver$) of a nontrivial solution to this ODE and its first derivative, see \eqref{eq:SL-boundary-0}.
The characterization of degeneracy instants along the trivial branch in terms of a Sturm--Liouville eigenvalue problem is given in Proposition~\ref{prop:char-degeneracies}.

\subsection{Jacobi fields}
Let $\gamma_\hor^+$ be the portion of the geodesic $\gamma_\hor$ joining $\beta(0)\in\partial\Omega_a$ to $O\in\gamma_\ver$, see Figure~\ref{fig:verhor}.
Consider the variation of $\gamma_\hor^+$ by the (reparametrized) geodesics
$[0,1)\ni z\mapsto \sigma(a,s,z)\in\Omega_a$, with $|s|<\varepsilon$, see \eqref{eq:sigmas}. Linearizing at $s=0$ and taking normal components, one obtains 
\begin{equation}\label{eq:JacobifieldJa}
J_a(z):=\tfrac{\partial}{\partial s}\sigma(a,s,z)\big|_{s=0}\in T_{\sigma(a,0,z)}\Omega_a,
\end{equation}
which is a Jacobi field along $\gamma_\hor^+$, with $x_4$-component in $T_{\sigma(a,0,z)}\Omega_a\subset \R^4$ given by
\begin{equation}\label{eq:v_a}
v_{a}\colon [0,1)\longrightarrow \R, \quad v_{a}(z):=\tfrac{\partial}{\partial s}\sigma_{x_4}(a,s,z)\big|_{s=0}.
\end{equation}

By Proposition~\ref{prop:reduction}, the preimage under $\Pi$ of this variation by geodesics gives rise to a variation by $\G$-invariant minimal surfaces in $(S^3,\g_a)$ of the planar hemisphere
\begin{equation}\label{eq:sigma_hor_+}
\Sigma_4^+(a) := \overline{\Pi^{-1}(\gamma_\hor^+)}= E(a,b,b,d) \cap \big\{0\leq x_1\leq a,\, x_4=0\big\}.
\end{equation}
The corresponding normal Jacobi field $\widetilde J_a$ along $\Sigma_4^+(a)$ projects via $\Pi$ to $J_a$. Thus, parametrizing 
$\Sigma_4^+(a)\setminus\{(a,0,0,0)\}$
with coordinates $(\rho,\theta) \in (0, T_a(0)] \times [0,2\pi]$ and denoting by $\vec n$ its unit normal in $(S^3,\g_a)$,
we have that the function $\psi_a\colon \Sigma_4^+(a)\to\R$, given by $\psi_a(\rho,\theta)=\g_a\big(\widetilde J_a,\vec n \big)$,
satisfies $\psi_a(t_a(z)\, T_a(0),\theta)=v_{a}(z)$ for all $z\in [0,1)$ and $\theta\in [0,2\pi]$, i.e., is a well-defined real analytic \emph{radial} function on  $\Sigma_4^+(a)$, and $\psi=\psi_a$
solves the Jacobi equation
\begin{equation}\label{eq:Jacobi_upstairs}
\Delta \psi - \Ric(\vec n)\,\psi=0,
\end{equation}
where $\Delta$ is the Laplacian on $\Sigma_4^+(a)$.
Since $\Sigma_4^+(a)$ 
is a portion of the totally geodesic $2$-sphere $\Sigma_4(a)$, 
the curvature term in \eqref{eq:Jacobi_upstairs} involves only the ambient Ricci curvature, because the second fundamental form vanishes identically, and $\Delta$ is the restriction to $\Sigma_4^+(a)$ of the (positive-semidefinite) Laplacian on $\Sigma_4(a)$.

\subsection{Sturm--Liouville equation}\label{sub:coeffSturmeqn}
In order to write the (radial part of the) Jacobi equation \eqref{eq:Jacobi_upstairs} as a Sturm--Liouville ODE, satisfied by $v_{a}(z)$, recall from \eqref{eq:sigma_hor_+} that $\Sigma_4^+(a)$ is the portion with $x_1\geq0$ of the $2$-dimensional ellipsoid of revolution
\begin{equation}\label{eq:sigma_hor}
\Sigma_4(a)= 
\left\{ (x_1,x_2,x_3,0)\in \R^4 :\frac{x_1^2}{a^2}+\frac{x_2^2}{b^2}+\frac{x_3^2}{b^2} =1 \right\}.
\end{equation}
The Ricci curvature of $(S^3,\g_a)$ at $(x_1,x_2,x_3,0)\in \Sigma_4(a)$ in the unit normal direction $\vec n=(0,0,0,1)$ can be computed with the Gauss equation and \eqref{eq:sigma_hor} to be
\begin{equation}\label{eq:ricci}
\Ric(\vec n) =
a^2\, \frac{a^2\left(1-\frac{x_1^2}{a^2}\right)+b^2\left(1+ \frac{x_1^2}{a^2}\right) }{d^2
\left(a^2\left(1-\frac{x_1^2}{a^2}\right) +b^2 \, \frac{x_1^2}{a^2} \right)^2 }.
\end{equation}

Moreover, the induced metric on $\Sigma_4(a)$, i.e., such that the inclusion \eqref{eq:sigma_hor}  into $(S^3,\g_a)$ is isometric, coincides with that induced by the parametrization
\begin{equation*}
x_1(\phi,\theta)=a\cos \phi, \quad 
x_2(\phi,\theta)=b\sin\phi \sin\theta, \quad
x_3(\phi,\theta)=b\sin\phi \cos\theta,
\end{equation*}
where $\phi\in [0,\pi]$ and $\theta\in [0,2\pi]$. Thus, it can be written in polar coordinates $(\rho,\theta)$ as $\dd \rho^2+\varphi(\rho)^2 \dd\theta^2$, where
\begin{equation*}
\dd \rho = \sqrt{a^2\sin^2\phi + b^2\cos^2\phi}\;\dd \phi, 
\quad \text{ and } \quad \varphi(\rho)=b\sin(\phi(\rho)). 
\end{equation*}
In particular, the Laplacian of a radial function $\psi\colon \Sigma_4(a)\to\R$ can be computed as
\begin{align}\label{eq:laplacian-rho-z}
\Delta \psi(\rho)  &= -\frac{1}{\varphi(\rho)}\frac{\dd}{\dd \rho}\left(\varphi(\rho)\frac{\dd}{\dd \rho} \psi(\rho)\right)\nonumber \\
&= -\frac{1}{\sin\phi \sqrt{a^2\sin^2\phi + b^2\cos^2\phi}} \frac{\dd}{\dd \phi}\left(\frac{\sin\phi}{\sqrt{a^2\sin^2\phi + b^2\cos^2\phi}} \frac{\dd}{\dd \phi} \psi(\rho(\phi))\!\right)\\
&= -\frac{1}{\sqrt{a^2(1-z^2)+b^2z^2}} \frac{\dd}{\dd z}\left(\frac{1-z^2}{\sqrt{a^2(1-z^2)+b^2z^2}} \frac{\dd}{\dd z}\psi\big(\rho(\phi(z))\big)\!\right),\nonumber
\end{align}
since $z=\frac{x_1}{a}=\cos\phi$, 
and so $\rho(\phi(z))=\int_0^{\arccos z} \sqrt{a^2\sin^2 \xi + b^2\cos^2\xi}\;\dd \xi$. Thus, we arrive at the desired characterization of radial solutions to the Jacobi equation:

\begin{proposition}\label{prop:SL-characterization}
A radial function $\psi\colon \Sigma_4^+(a)\to\R$, $\psi(\rho,\theta)=\psi(\rho)$, solves the Jacobi equation~\eqref{eq:Jacobi_upstairs} if and only if $v(z)=\psi(\rho(\phi(z)))$ solves the ODE
\begin{equation}\label{eq:SL}
-\frac{\dd}{\dd z}\left(p_a(z)\frac{\dd}{\dd z}v(z) \right)+q_a(z)\, v(z)=0, \quad z\in [0,1),
\end{equation}
where
\begin{equation}\label{eq:pa-qa}
p_a(z) :=\dfrac{ 1-z^2 }{\sqrt{a^2(1-z^2)+b^2z^2}}, \quad
q_a(z) :=-a^2\dfrac{a^2(1-z^2) +b^2(1+z^2)}{d^2 \big(a^2(1-z^2)+b^2z^2\big)^{3/2}}.
\end{equation}
In particular, this is the case for $\psi_a=\g_a\big(\widetilde J_a,\vec n \big)$, and 
$v_{a}(z):=\psi_{a}(\rho(\phi(z)))$. 
\end{proposition}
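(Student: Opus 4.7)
The proof is essentially a direct computation: the Jacobi equation $\Delta\psi - \mathrm{Ric}(\vec n)\psi = 0$ for a radial function is converted from the variable $\rho$ (arclength on $\Sigma_4^+(a)$) to the variable $z = x_1/a$, and the coefficients are read off.

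My plan is as follows. First, I would invoke the computation already carried out in \eqref{eq:laplacian-rho-z}, which expresses the radial Laplacian of $\psi(\rho)$ in terms of $z$. Setting $v(z):=\psi(\rho(\phi(z)))$, that identity reads
\begin{equation*}
\Delta \psi = -\frac{1}{\sqrt{a^2(1-z^2)+b^2 z^2}}\,\frac{\dd}{\dd z}\!\left(p_a(z)\,\frac{\dd v}{\dd z}\right),
\end{equation*}
where $p_a(z)$ is exactly the coefficient in \eqref{eq:pa-qa}. Second, I would substitute $x_1=az$ into the Ricci curvature formula \eqref{eq:ricci}, obtaining
\begin{equation*}
\Ric(\vec n) = \frac{a^2\bigl(a^2(1-z^2) + b^2(1+z^2)\bigr)}{d^2 \bigl(a^2(1-z^2) + b^2 z^2\bigr)^2}.
\end{equation*}

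Third, I would plug both expressions into $\Delta\psi - \Ric(\vec n)\psi = 0$ and multiply the resulting equation through by $\sqrt{a^2(1-z^2) + b^2 z^2}$. The leading term becomes $-\tfrac{\dd}{\dd z}(p_a(z)\, v'(z))$, while the zero-order term becomes $-\sqrt{a^2(1-z^2)+b^2 z^2}\cdot \Ric(\vec n)\cdot v(z)$, which simplifies to $q_a(z)\, v(z)$ with $q_a$ as defined in \eqref{eq:pa-qa}. The equivalence in both directions is immediate, since the factor $\sqrt{a^2(1-z^2)+b^2z^2}$ is strictly positive on $[0,1)$.

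For the ``In particular'' clause, I would recall that $\psi_a=\g_a(\widetilde J_a,\vec n)$ is, by construction, the normal component of the Jacobi field associated to a variation by $\G$-invariant minimal surfaces, hence satisfies \eqref{eq:Jacobi_upstairs}; radiality was already verified in the text preceding the proposition, and the identification $v_a(z)=\psi_a(\rho(\phi(z)))$ follows from the definition \eqref{eq:v_a} together with the parametrization chosen in \eqref{eq:sigmas}. There is no substantive obstacle here: the whole statement is a change-of-variable identity, so the only risk is bookkeeping errors in the algebra relating $\phi$, $\rho$, and $z$, which I would double-check by confirming that $p_a(z)/\sqrt{a^2(1-z^2)+b^2 z^2}$ matches the coefficient $(1-z^2)/(a^2(1-z^2)+b^2 z^2)$ appearing inside the derivative in \eqref{eq:laplacian-rho-z}.
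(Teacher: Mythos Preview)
Your proposal is correct and follows exactly the approach of the paper: the proposition is stated there as the direct consequence of the Laplacian computation \eqref{eq:laplacian-rho-z} and the Ricci formula \eqref{eq:ricci}, combined and cleared of the common factor $\sqrt{a^2(1-z^2)+b^2z^2}$, precisely as you outline. There is no separate argument in the paper beyond what you have written.
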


Clearly, both $p_a(z)$ and $q_a(z)$ are real analytic functions on $[0,1]$, and admit an even real analytic extension to $[-1,1]$, given by the same expressions above, and denoted by the same symbols. Straightforward computations show that
\begin{equation}\label{eq:minmaxpaqa}
\begin{aligned}
0&\leq p_a(z)\leq \tfrac{1}{a}, \\
\min\left\{-\tfrac{a^2+b^2}{a d^2},-\tfrac{2a^2}{b d^2} \right\} &\leq q_a(z)\leq \max\left\{-\tfrac{a^2+b^2}{a d^2},-\tfrac{2a^2}{b d^2} \right\},
\end{aligned}
\quad \text{ for all } 0\leq z\leq 1,
\end{equation}
and $p_a(z)$ has a simple zero at $z=1$, so \eqref{eq:SL} has a \emph{regular singular endpoint}; this is discussed further in Section~\ref{sec:singularSL}.
Moreover, from \eqref{eq:pa-qa}, the expansions of $p_a(z)$ and $q_a(z)$ as power series centered at $z=1$,
\begin{equation*}
p_a(z)=\sum\limits_{n=1}^\infty \hat p_{a}^{(n)}(z-1)^n, \quad\text{ and }\quad q_a(z)=\sum\limits_{n=0}^\infty \hat q_{a}^{(n)}(z-1)^n,
\end{equation*}
have radius of convergence equal to
\begin{equation}\label{eq:Ra}
R_a:=\begin{cases}\dfrac{\max\{a,b\}}{\sqrt{\vert a^2-b^2\vert}}-1,& \text{if } a\ne b,\\[.3cm]
+\infty,& \text{if } a=b.
\end{cases}
\end{equation}
It is easy to see that, for all $r\in(0,R_a)$ and  $n\in\mathds N$, the maps $(0,+\infty)\ni a\mapsto r^n\vert \hat p_{a}^{(n)}\vert$ and
$(0,+\infty)\ni a\mapsto r^n\vert \hat q_{a}^{(n)}\vert$ are real analytic, and locally bounded uniformly in $n$.

Finally, let us observe that $p_a$ and $q_a$ depend monotonically on $a$, since
\begin{equation}\label{eq:dpaqanegative}
\frac{\partial}{\partial a}p_a(z) < 0\quad\text{and}\quad \frac{\partial}{\partial a}q_a(z)<0,\quad\text{for all}\ z\in (0,1).
\end{equation}
By \eqref{eq:minmaxpaqa}, these functions converge uniformly on $z\in [0,1]$ as $a\nearrow+\infty$ as follows:
\begin{equation}\label{eq:paqalimit}
\lim_{a\to+\infty}p_a(z)=0,\quad\text{and}\quad\lim_{a\to+\infty}q_a(z)=-\infty.
\end{equation}

\subsection{Boundary conditions}
Let us now consider the behavior of the solution $v_a(z)$ to the Sturm--Liouville ODE \eqref{eq:SL} at the endpoints $z=0$ and $z=1$, which correspond to where $\gamma_\hor^+$ meets $O$ and $\partial\Omega_a$, respectively.

First, by Definition~\ref{def:fevenfodd}, the boundary conditions satisfied by $v_a(z)$ at $z=0$ are:
\begin{equation}\label{eq:SL-boundary-0}
\begin{aligned}
v_a(0)&=\tfrac{\partial}{\partial s}\sigma_{x_4}(a,s,0)\big|_{s=0}=\tfrac{\partial}{\partial s}  f_\odd(a,s)\big|_{s=0},\\[4pt]
v'_a(0)&=\tfrac{\partial}{\partial z}\tfrac{\partial}{\partial s}\sigma_{x_4}(a,s,z)\big|_{z=0,s=0}=\tfrac{\partial}{\partial s}  f_\even(a,s)\big|_{s=0}.
\end{aligned}
\end{equation}

Second, let us analyze the (limiting) boundary conditions satisfied by $v_{a}(z)$ at $z=1$.
Since $\lim_{z\nearrow1}\sigma(a,s,z)=\beta(s)$ for all $s\in\mathcal I$, recall \eqref{eq:beta}, the linearization of $\sigma_{x_4}$ at $s=0$
satisfies $\lim_{z\nearrow1} v_{a}(z)= d$.
Moreover, for all $s\in \mathcal I$, the (reparametrized) geodesic $\sigma(a,s,z)$ meets $\partial\Omega_a$ orthogonally at $\beta(s)$ as $z\nearrow1$. So, 
\begin{align*}
0&= \check{\g}_a\!\left(\lim_{z\nearrow1} \tfrac{\partial}{\partial z}\sigma(a,s,z) , \,\beta'(s)\!\right)\\
&= -a \sin s \,\lim_{z\nearrow1} \tfrac{\partial}{\partial z}\sigma_{x_1}(a,s,z)  + d \cos s \, \lim_{z\nearrow1}\tfrac{\partial}{\partial z}\sigma_{x_4}(a,s,z).
\end{align*}
Linearizing the above at $s=0$, since $\sigma_{x_1}(a,0,z)=az$ for all $z\in[0,1)$, we see that
\begin{equation*}
\tfrac{\partial}{\partial s}\Big(\lim_{z\nearrow1} \tfrac{\partial}{\partial z}\sigma_{x_4}(a,s,z) \Big)\Big|_{s=0}=\tfrac{a}{d}\lim_{z\nearrow1} \tfrac{\partial}{\partial z} \sigma_{x_1}(a,0,z)=\tfrac{a^2}{d},
\end{equation*}
hence $\lim_{z\nearrow1} v_{a}'(z)=\frac{a^2}{d}$.
Thus, altogether, the boundary conditions at $z=1$ are:
\begin{equation}\label{eq:SL-boundary}
\lim_{z\nearrow1} \; v_{a}(z)=d, \qquad \text{ and }\qquad \lim_{z\nearrow1} \; v'_{a}(z)=\tfrac{a^2}{d}.
\end{equation}

\begin{remark}
A radial function $\psi(\rho,\theta)=\psi(\rho)$ as in Proposition~\ref{prop:SL-characterization} is of class $C^1$ at $\rho=0$ if and only if $\psi'(0)=0$. This \emph{intrinsic} smoothness condition automatically holds whenever $\psi(\rho)$ is defined by means of a $C^1$ function $v(z)$ 
as $\psi(\rho(\phi(z)))=v(z)$, independently of \eqref{eq:SL-boundary}. Indeed, differentiating both sides in $z$ yields:
\begin{equation*}
\psi'\big(\rho(\phi(z))\big) = - \frac{ \sqrt{1-z^2}}{ \sqrt{a^2 (1-z^2) + b^2 z^2 } }\, v'(z),
\end{equation*}
which converges to zero as $z\nearrow 1$, i.e., as $\rho\searrow0$.
Similar considerations can be made
regarding \eqref{eq:SL-boundary-0} and the above as $z\searrow0$, 
related to the existence of even/odd $C^1$ extensions of $\psi \colon \Sigma_4^+(a)\to\R$ to all of $\Sigma_4(a)$.
\end{remark}

\section{Singular Sturm--Liouville eigenvalue problems}\label{sec:singularSL}

Motivated by the Sturm--Liouville equation \eqref{eq:SL} in Proposition~\ref{prop:SL-characterization}, consider the differential operator  $\mathcal L_a$ defined on smooth functions $v\colon [0,1)\to\R$ by
\begin{equation}\label{eq:La}
\mathcal L_a(v)=\frac{1}{p_a}\big(-(p_a\,v')'+q_a\,v\big),
\end{equation}
where $p_a$ and $q_a$ are the functions defined in \eqref{eq:pa-qa}.
An application of the classical \emph{Frobenius method} (see, e.g., \cite[Sec.~4.2]{teschl})
yields the following existence result: 

\begin{proposition}\label{prop:r-asolution}
For all $a>0$ and $\lambda\in\R$, there exists a unique real analytic function $u_{a,\lambda}\colon [0,1]\to\R$ such that $\mathcal L_a(u_{a,\lambda})=\lambda\, u_{a,\lambda}$,
$u_{a,\lambda}(1)>0$ and 
\begin{equation}\label{eq:ualambdanorm}
\int_0^1 u_{a,\lambda}(z)^2\,p_a(z)\dd z=1.
\end{equation}
Furthermore, $a^2 u_{a,\lambda}(1)-d^2 u'_{a,\lambda}(1)=0$, and the map $(a,\lambda,z)\mapsto u_{a,\lambda}(z)$ is real analytic.
Any solution to $\mathcal L_a(v)=\lambda \, v$ which is not a multiple of $u_{a,\lambda}(z)$ is of the form $v(z)=C\,u_{a,\lambda}(z)\log(1-z)+w(z)$, with $C\neq0$ and $w\colon[0,1]\to\R$ real analytic.
\end{proposition}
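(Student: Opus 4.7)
The plan is to apply the Frobenius method at the regular singular endpoint $z=1$, extend the resulting local analytic solution to all of $[0,1]$ by the standard theory of analytic linear ODEs on the regular part, and then normalize. Analytic dependence on $(a,\lambda)$ will follow once the Frobenius coefficients are controlled uniformly on compact subsets of the parameter space.

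First, I would write $p_a(z)=(1-z)g_a(z)$ with $g_a(z)=(1+z)/\sqrt{a^2(1-z^2)+b^2 z^2}$, which is real analytic near $z=1$ with $g_a(1)=2/b>0$, and substitute $t=1-z$. Multiplying $\mathcal L_a v=\lambda v$ by $-t/g_a(1-t)$ casts it into canonical Fuchsian form
\begin{equation*}
t^2\,\tilde v''(t)+t\,P_a(t)\,\tilde v'(t)+Q_{a,\lambda}(t)\,\tilde v(t)=0,
\end{equation*}
with $P_a,Q_{a,\lambda}$ real analytic near $t=0$, $P_a(0)=1$, $Q_{a,\lambda}(0)=0$, and with their Taylor coefficients at $t=0$ depending real analytically on $(a,\lambda)$ through the expansions of $p_a$ and $q_a$ recalled after \eqref{eq:Ra}. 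The indicial equation is $r(r-1)+r=r^2=0$, with a double root at $r=0$.

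Next, I would construct the analytic solution via Frobenius. Plugging $\tilde v(t)=\sum_{n\geq 0}c_n t^n$ with $c_0=1$ and matching coefficients yields a recursion $n^2 c_n=F_n(c_0,\dots,c_{n-1};a,\lambda)$, in which $F_n$ is polynomial in the Taylor coefficients of $P_a,Q_{a,\lambda}$. Since $n^2\neq 0$ for $n\geq 1$, each $c_n$ is uniquely determined and real analytic in $(a,\lambda)$. A standard majorant argument, exploiting that $r^n|\hat p_a^{(n)}|$ and $r^n|\hat q_a^{(n)}|$ are locally bounded uniformly in $n$ for every $r\in(0,R_a)$, shows that the series converges on $|t|<R_a$ uniformly on compacta in $(a,\lambda)$, giving a jointly real analytic $\tilde v_1(t,a,\lambda)$. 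Because $p_a,q_a$ are real analytic on $[0,1)$ with $p_a>0$ there, analytic dependence of linear ODE solutions on parameters extends $\tilde v_1$ to a real analytic function $v_1(z,a,\lambda)$ on $[0,1]$ by transporting initial data across the regular interval while preserving joint analyticity. Since $v_1(1,a,\lambda)=c_0=1>0$, the integral $I(a,\lambda):=\int_0^1 v_1(z,a,\lambda)^2 p_a(z)\,\dd z$ is positive, finite, and real analytic in $(a,\lambda)$, so $u_{a,\lambda}:=v_1/\sqrt{I(a,\lambda)}$ meets all the requirements.

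The boundary identity is immediate from evaluating $-(p_a u'_{a,\lambda})'+q_a u_{a,\lambda}=\lambda p_a u_{a,\lambda}$ at $z=1$: since $p_a(1)=0$, smoothness of $u_{a,\lambda}$ reduces it to $-p_a'(1)u'_{a,\lambda}(1)+q_a(1)u_{a,\lambda}(1)=0$, and substituting $p_a'(1)=-g_a(1)=-2/b$ and $q_a(1)=-2a^2/(bd^2)$ yields $a^2u_{a,\lambda}(1)-d^2u'_{a,\lambda}(1)=0$. For the second solution, standard Frobenius theory in the double-root case produces a linearly independent local solution $\tilde v_2(t)=\tilde v_1(t)\log t+\sum_{n\geq 1}b_n t^n$ with the $b_n$ determined by an analogous recursion; extending across the regular interval and reverting to $z$ gives a solution of the form $u_{a,\lambda}(z)\log(1-z)+w_0(z)$ with $w_0\colon [0,1]\to\R$ real analytic. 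Since $\{u_{a,\lambda},\tilde v_2\}$ spans the solution space, any solution not proportional to $u_{a,\lambda}$ has nonzero log-coefficient $C$, yielding the stated form. The main obstacle I anticipate is the uniform-in-parameter control needed to produce a jointly real analytic extension: one must choose a transition point inside $(0,1)$, independent of $(a,\lambda)$ on compacta, at which the Frobenius representation and the analytic-dependence estimates for the ODE flow on the regular part both hold, so that their concatenation yields joint real analyticity in $(a,\lambda,z)$.
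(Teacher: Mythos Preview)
Your proposal is correct and follows essentially the same approach as the paper: both invoke the Frobenius method at the regular singular endpoint $z=1$, identify the double indicial root $r=0$ to produce a unique (up to scalar) local analytic solution and a second logarithmic one, extend analytically across the regular interval $[0,1)$, normalize, and derive the boundary relation by evaluating the equation at $z=1$. The paper's proof is terser but contains the same ingredients, including the uniform control on the Taylor coefficients of $p_a$ and $q_a$ that you flag as the main technical point for joint real analyticity in $(a,\lambda,z)$.
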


\begin{proof}
The indicial equation for the singularity $z=1$ of the ODE 
$\mathcal L_a(v)=\lambda\,v$ has $r=0$ as a double root, which implies the existence of a unique (up to constant factors) power series solution centered at $z=1$, whose radius of convergence is equal to $R_a$, see \eqref{eq:Ra}. Since $p_a$ and $q_a$ are real analytic on $[0,1]$, any power series solution of this ODE centered at $z=1$ can be uniquely extended to a real analytic solution on all of $[0,1]$. Thus, $u_{a,\lambda}\colon [0,1]\to\R$ is uniquely defined by the above normalizations. The claim characterizing unbounded solutions is standard~\cite{teschl}.

Evaluating both sides of $\mathcal L_a(v)=\lambda\,v$ at $z=1$ shows that 
a necessary condition for $v\colon [0,1]\to\R$ to be a real analytic solution is that
$q_a(1) v(1)-p_a'(1)v'(1)=0$, since $p_a(1)=0$.
Thus, as $q_a(1)=-\frac{2a^2}{bd^2}$ and $p'_a(1)=-\frac{2}{b}$, 
any such solution; in particular $v=u_{a,\lambda}$, satisfies $a^2v(1)-d^2v'(1)=0$.
Regarding analytic dependence, after the value $u_{a,\lambda}(1)$ is chosen, all the coefficients of the power series $u_{a,\lambda}(z)=\sum_{n\ge0}\hat u_{a,\lambda}^{(n)}(z-1)^n$ can be determined inductively from those of $p_a$ and $q_a$ using this initial condition, and they are easily seen to be real analytic functions of $a$ and $\lambda$. 
Since the same holds for the coefficients of $p_a$ and $q_a$, it follows that, for all $r\in(0,R_a)$ and $n\in\mathds N$, the maps $(a,\lambda)\mapsto r^n\big\vert \hat u_{a,\lambda}^{(n)}\big\vert$ are locally bounded uniformly in $n$, which implies that a solution with fixed value at $z=1$ depends in a real analytic way on the pair $(a,\lambda)$. Clearly, the same holds under the normalization \eqref{eq:ualambdanorm}.
\end{proof}

\begin{remark}
If $a=b=c=d$, the function $u_{a,0}(z)$ is explicitly computed in \eqref{eq:ua0round}.
\end{remark}

\subsection{Eigenvalue problems and spectra}\label{subsec:evalprobs}
As unbounded solutions to $\mathcal L_a(v)=0$ have a logarithmic singularity at $z=1$ by Proposition~\ref{prop:r-asolution}, it follows that the regular singular endpoint $z=1$ is of \emph{limit circle non-oscillating} type, see e.g.~\cite{weidmann,zettl}.
In particular, by Weyl--Titchmarsh--Kodaira spectral theory, the operator $\mathcal L_a$ is (essentially) self-adjoint in the following spaces (see~\cite[\S 4]{weidmann} or \cite[\S 6]{egnt}):
\begin{equation}\label{eq:vevenvodd}
\begin{aligned}
V_\even &:= \Big\{ v\in C^\infty([0,1),\R) : v'(0)=0, \;  \lim_{z\nearrow1 } p_a(z)v'(z)=0 \Big\}, \\
V_\odd &:= \Big\{ v\in C^\infty([0,1),\R) : \, v(0)=0, \; \lim_{z\nearrow1 } p_a(z)v'(z)=0\Big\}.
\end{aligned}
\end{equation}

We denote by $(\mathcal P_a)_\even$ and $(\mathcal P_a)_\odd$
the Sturm--Liouville eigenvalue problem $\mathcal L_a(v)=\lambda \, v$ on the spaces $V_\even$ and $V_\odd$, respectively.
Despite being \emph{singular}, these problems enjoy virtually all the usual spectral properties of \emph{regular} Sturm--Liouville problems.
For the reader's convenience, we discuss some of these results:

\begin{lemma}\label{lem:bounded-eigenfunctions}
If $v\in V_\even$ or $v\in V_\odd$ is an eigenfunction of $\mathcal L_a$, i.e., $\mathcal L_a(v)=\lambda\,v$, $\lambda\in\R$, then $v(z)$ admits a smooth extension to $z=1$ and $a^2v(1)-d^2v'(1)=0$.
\end{lemma}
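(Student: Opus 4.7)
The plan is to invoke Proposition~\ref{prop:r-asolution} and show that any eigenfunction in $V_\even$ or $V_\odd$ must be a constant multiple of the real analytic solution $u_{a,\lambda}$. Both spaces in \eqref{eq:vevenvodd} impose the same endpoint condition $\lim_{z\nearrow 1} p_a(z)\,v'(z)=0$, so the whole task reduces to ruling out the logarithmically singular second solution near $z=1$; once that is done, the stated boundary identity transfers directly from Proposition~\ref{prop:r-asolution}.

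First I would invoke the dichotomy in Proposition~\ref{prop:r-asolution}: given an eigenfunction $v$ with $\mathcal L_a(v)=\lambda v$, either $v=c\,u_{a,\lambda}$ for some constant $c$, or else $v(z)=C\,u_{a,\lambda}(z)\log(1-z)+w(z)$ with $C\neq 0$ and $w\colon[0,1]\to\R$ real analytic. In the first case the conclusion is immediate: $v$ extends real analytically (hence smoothly) to $z=1$, and the identity $a^2 u_{a,\lambda}(1)-d^2 u'_{a,\lambda}(1)=0$ from Proposition~\ref{prop:r-asolution} scales to $a^2 v(1)-d^2 v'(1)=0$.

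The substance of the proof is therefore to exclude the second case. Differentiating yields
\[
v'(z)=C\,u'_{a,\lambda}(z)\log(1-z)-\frac{C\,u_{a,\lambda}(z)}{1-z}+w'(z),
\]
and from the explicit formula \eqref{eq:pa-qa} one reads off the asymptotic $p_a(z)=\tfrac{2}{b}(1-z)+O\!\left((1-z)^2\right)$ as $z\nearrow 1$. Multiplying out, the term involving $u'_{a,\lambda}\log(1-z)$ vanishes in the limit because $(1-z)\log(1-z)\to 0$, the term $p_a\,w'$ vanishes because $p_a(1)=0$ and $w'$ is bounded, while the singular middle term survives:
\[
\lim_{z\nearrow 1}\, p_a(z)\cdot\frac{-C\,u_{a,\lambda}(z)}{1-z}=-\frac{2\,C\,u_{a,\lambda}(1)}{b},
\]
which is nonzero since $u_{a,\lambda}(1)>0$ by Proposition~\ref{prop:r-asolution} and $C\neq 0$ by assumption. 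This contradicts the endpoint condition defining $V_\even$ and $V_\odd$, so this case cannot occur.

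The main (and only) technical point is this asymptotic computation; it works out cleanly because the simple zero of $p_a$ at $z=1$ exactly compensates the $(1-z)^{-1}$ singularity produced by differentiating $\log(1-z)$, leaving a nonzero residue obstructed by the self-adjoint boundary condition. Note that the conditions $v'(0)=0$ or $v(0)=0$ imposed at the \emph{regular} endpoint play no role in the argument, which is consistent with the conclusion being identical in $V_\even$ and $V_\odd$.
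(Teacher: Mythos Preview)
Your proof is correct and follows essentially the same approach as the paper: invoke the dichotomy from Proposition~\ref{prop:r-asolution} and rule out the logarithmic solution by showing it violates the condition $\lim_{z\nearrow 1}p_a(z)v'(z)=0$. The paper simply asserts this last fact without computation, whereas you supply the explicit asymptotic calculation; your version is more detailed but not substantively different.
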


\begin{proof}
By Proposition~\ref{prop:r-asolution}, either $v(z)$ is real analytic at $z=1$, so $v(z)=C\, u_{a,\lambda}(z)$, $C\in\R$, and  the stated conclusions hold, or else $v(z)=C\,u_{a,\lambda}(z) \log(1-z)+w(z)$, where $w\colon[0,1]\to\R$ is real analytic and $C\neq 0$. 
The latter functions $v(z)$ do not satisfy $\lim_{z\nearrow1}p_a(z)v'(z)=0$, and hence do not belong to $V_\even$ nor $V_\odd$.
\end{proof}

\begin{remark}
Furthermore, note that eigenfunctions $v\colon[0,1)\to\R$ of $(\mathcal P_a)_\even$ and $(\mathcal P_a)_\odd$ also extend across $z=0$ to functions $v\colon [-1,1]\to\R$ satisfying $\mathcal L_a(v)=\lambda\,v$ that are even and odd, respectively, since $p_a$ and $q_a$ are both even functions.
\end{remark}

The resolvents of $\mathcal L_a$ on both $V_\even$ and $V_\odd$ are compact (Hilbert--Schmidt) operators~\cite[Thm.~7.1]{egnt}. Thus, applying the Spectral Theorem and recalling standard oscillation results for singular Sturm--Liouville operators, we arrive at the following statement, see also~\cite[Thm.~10.12.1, (3), (4), p.\ 208--209]{zettl}.

\begin{proposition}\label{thm:spectralthsingular}
For all $a>0$, the spectra of $(\mathcal P_a)_\even$ and $(\mathcal P_a)_\odd$ 
are discrete, bounded from below, unbounded from above, and every eigenvalue is simple:
\begin{equation*}
\begin{aligned}
&\text{Spectrum of } (\mathcal P_a)_\even: & \lambda_0^\even(a)<\lambda_1^\even(a)<\dots<\lambda_n^\even(a)<\dots\nearrow+\infty, \\
&\text{Spectrum of } (\mathcal P_a)_\odd: & \lambda_0^\odd(a)<\lambda_1^\odd(a)<\dots<\lambda_n^\odd(a)<\dots\nearrow+\infty.
\end{aligned}
\end{equation*}
The eigenfunctions of $(\mathcal P_a)_\even$ as well as those of $(\mathcal P_a)_\odd$ form orthogonal bases of the Hilbert space $L^2([0,1],p_a\dd z)$. 
In particular, the number of negative eigenvalues of $(\mathcal P_a)_\even$ and $(\mathcal P_a)_\odd$ is equal to the dimension of the largest subspace in $V_\even$ and $V_\odd$, respectively, on which the following quadratic form is negative-definite:
\begin{equation}\label{eq:defQa}
Q_a(v):=\langle \mathcal L_a(v),v\rangle 
= \int_0^1 p_a(z)v'(z)^2+q_a(z) v(z)^2  \;\dd z.
\end{equation}
Moreover, $n$-th eigenfunctions of $(\mathcal P_a)_\even$ and $(\mathcal P_a)_\odd$ have exactly $n$ zeros in~$(0,1)$.
\end{proposition}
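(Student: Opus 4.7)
The plan is to realize $(\mathcal P_a)_\even$ and $(\mathcal P_a)_\odd$ as self-adjoint realizations of $\mathcal L_a$ on $L^2([0,1],p_a\dd z)$ within the Weyl--Titchmarsh--Kodaira framework for singular Sturm--Liouville problems, and then invoke the classical spectral and oscillation theorems for such operators, as presented in \cite{weidmann,zettl,egnt}. The bulk of the work is not the spectral theory itself (which is standard once the right hypotheses are in place), but rather the verification that $\mathcal L_a$, together with the conditions in \eqref{eq:vevenvodd}, fits that framework; the rest follows by citation.

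First, I would analyze the endpoints. The endpoint $z=0$ is regular, so the Dirichlet and Neumann conditions $v(0)=0$ and $v'(0)=0$ are genuine self-adjoint separated boundary conditions. At $z=1$, $p_a$ has a simple zero and $q_a$ is real analytic, so this is a regular singular point; \Cref{prop:r-asolution} supplies a pair of linearly independent solutions to $\mathcal L_a(v)=\lambda v$, one analytic up to $z=1$ and one of the form $C\,u_{a,\lambda}(z)\log(1-z)+w(z)$ with $w$ analytic. Using $p_a(z)\sim\tfrac{2}{b}(1-z)$, both solutions lie in $L^2([0,1],p_a\dd z)$, so $z=1$ is of \emph{limit-circle} type. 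The condition $\lim_{z\nearrow1}p_a(z)v'(z)=0$ eliminates the logarithmic branch (for which $p_a v'\to-2C/b\ne0$) while keeping the analytic one, hence defines a bona fide self-adjoint extension of $\mathcal L_a$. The absence of oscillating solutions near $z=1$ (\emph{non-oscillating} case) ensures the spectrum is bounded below.

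With these boundary conditions in place, I would invoke \cite[Thm.~7.1]{egnt} to conclude that the resolvent of $\mathcal L_a$ is Hilbert--Schmidt, and then apply the spectral theorem for compact self-adjoint operators to obtain a discrete real spectrum $\{\lambda_n\}$ accumulating only at $+\infty$, together with an orthogonal basis of eigenfunctions in $L^2([0,1],p_a\dd z)$. Simplicity is automatic: the eigenvalue equation is a second-order linear ODE, and each of the two separated boundary conditions imposes an independent one-dimensional constraint, so eigenspaces are at most one-dimensional. The min-max characterization
\begin{equation*}
\lambda_n=\min_{\dim W=n+1}\,\max_{v\in W\setminus\{0\}}\frac{Q_a(v)}{\int_0^1 v^2\,p_a\dd z}
\end{equation*}
then yields the identification of the number of negative eigenvalues with the index of $Q_a$, by a standard argument after verifying \eqref{eq:defQa} via integration by parts, using the vanishing of $p_a v v'$ at both endpoints enforced by \eqref{eq:vevenvodd}. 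Finally, the zero-counting statement for the $n$-th eigenfunction is the Sturmian oscillation theorem in the singular limit-circle setting, which in this non-oscillating case can be cited directly from \cite[Thm.~10.12.1, (3), (4)]{zettl}; alternatively, one proves it by a Prüfer angle argument, noting that the limit-circle boundary condition at $z=1$ translates to a well-defined limiting Prüfer angle.

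The main obstacle is conceptual rather than computational: since the problem is \emph{singular}, one has to be careful that the self-adjoint extension determined by the boundary condition $\lim_{z\nearrow1}p_a v'=0$ really is the one whose eigenfunctions are precisely the bounded solutions identified in \Cref{lem:bounded-eigenfunctions}, and that the Prüfer/oscillation theory goes through despite $p_a(1)=0$. Once the limit-circle, non-oscillating classification at $z=1$ is established, every assertion in the proposition reduces to a textbook consequence.
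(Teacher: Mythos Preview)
Your proposal is correct and follows essentially the same route as the paper: classify $z=1$ as limit-circle non-oscillating, invoke \cite[Thm.~7.1]{egnt} for compactness of the resolvent, apply the Spectral Theorem, and cite \cite[Thm.~10.12.1]{zettl} for the oscillation count. The paper's treatment is terser (it leaves the endpoint analysis and the min-max/index identification implicit), but your more explicit verification of the limit-circle classification and of the boundary condition selecting the analytic branch is exactly the content that the paper's citations encapsulate.
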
 

Clearly, the right-hand side of \eqref{eq:defQa} also defines a continuous quadratic form on the Sobolev space $W^{1,2}([0,1],\R)$. 
In order to estimate the index of $Q_a$ in the spaces \eqref{eq:vevenvodd}, we shall use test functions in $W^{1,2}([0,1],\R)$ and standard density arguments.

\subsection{Degeneracy instants}
Analyzing the spectra of $(\mathcal P_a)_\even$ and $(\mathcal P_a)_\odd$, we may determine whether the linearization of $f_\even$ and $f_\odd$ at $(a,0)$ vanishes.
Namely, by Proposition~\ref{prop:SL-characterization}, Lemma~\ref{lem:bounded-eigenfunctions}, and \eqref{eq:SL-boundary-0} and \eqref{eq:SL-boundary}, we have the following:

\begin{proposition}\label{prop:char-degeneracies}
The instant $a=a_*$ is a degeneracy instant for $f(a,s)=0$, i.e., $\frac{\partial f}{\partial s}(a_*,0)=0$, where $f=f_\even$ or $f_\odd$, if and only if $\lambda=0$ is an eigenvalue of $(\mathcal P_{a_*})_\even$ or $(\mathcal P_{a_*})_\odd$, respectively. In this case, $\ker \mathcal L_{a_*}$ is spanned by $u_{a_*,0}(z)$, which is a constant multiple of the function $v_{a_*}(z)$ defined in \eqref{eq:v_a}. 
\end{proposition}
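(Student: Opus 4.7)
The plan is to chase definitions: identify the Jacobi solution $v_a$ from \eqref{eq:v_a} with the distinguished analytic solution $u_{a,0}$ from \Cref{prop:r-asolution}, then read off the linearizations $\tfrac{\partial f_\even}{\partial s}(a,0)$ and $\tfrac{\partial f_\odd}{\partial s}(a,0)$ via \eqref{eq:SL-boundary-0} as (nonzero multiples of) $u'_{a,0}(0)$ and $u_{a,0}(0)$, and finally match those with the boundary conditions at $z=0$ built into $V_\even$ and $V_\odd$.

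First I would observe that $v_a$ satisfies $\mathcal L_a(v_a)=0$ by \Cref{prop:SL-characterization}, while \eqref{eq:SL-boundary} gives the finite limits $v_a(1)=d$ and $v'_a(1)=a^2/d$; in particular $v_a$ is bounded up to $z=1$. Since the alternative branch of solutions described in \Cref{prop:r-asolution} carries a $\log(1-z)$ singularity, $v_a$ must extend real analytically to $[0,1]$ and hence $v_a=c\,u_{a,0}$ for $c=d/u_{a,0}(1)\neq 0$. Substituting into \eqref{eq:SL-boundary-0} yields
\[
\tfrac{\partial f_\odd}{\partial s}(a,0)=c\,u_{a,0}(0),\qquad \tfrac{\partial f_\even}{\partial s}(a,0)=c\,u'_{a,0}(0),
\]
so $a$ is a degeneracy instant for $f_\odd$ (resp.\ $f_\even$) precisely when $u_{a,0}(0)=0$ (resp.\ $u'_{a,0}(0)=0$).

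Next I would connect this to the eigenvalue problems $(\mathcal P_a)_\even$ and $(\mathcal P_a)_\odd$. By \Cref{lem:bounded-eigenfunctions}, any eigenfunction with $\lambda=0$ extends smoothly to $z=1$, hence by \Cref{prop:r-asolution} it is a nonzero multiple of $u_{a,0}$. For $u_{a,0}$, the endpoint condition $\lim_{z\nearrow 1}p_a(z)u'_{a,0}(z)=0$ in \eqref{eq:vevenvodd} is automatic, since $p_a(1)=0$ and $u'_{a,0}(1)$ is finite; therefore $u_{a,0}\in V_\even$ iff $u'_{a,0}(0)=0$, while $u_{a,0}\in V_\odd$ iff $u_{a,0}(0)=0$. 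Combining this with the previous step gives the equivalences in the statement. Finally, when $a=a_*$ is a degeneracy instant, simplicity of eigenvalues in \Cref{thm:spectralthsingular} forces $\ker\mathcal L_{a_*}$ (in the relevant space) to be one-dimensional and spanned by $u_{a_*,0}$, which in turn is a nonzero multiple of $v_{a_*}$ by the identification established above.

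No serious obstacle is anticipated: the entire argument is bookkeeping on top of \Cref{prop:SL-characterization}, \Cref{prop:r-asolution}, and \Cref{lem:bounded-eigenfunctions}. The only points requiring care are verifying that boundedness at $z=1$ rules out the logarithmic branch (so that $v_a$ is automatically real analytic there and proportional to $u_{a,0}$), and noting that the limit boundary condition at $z=1$ appearing in the definition of $V_\even$ and $V_\odd$ is automatic for any smooth solution of $\mathcal L_a(v)=0$.
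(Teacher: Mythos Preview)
Your proposal is correct and follows exactly the approach the paper indicates: the paper does not give a separate proof, but states the proposition as an immediate consequence of \Cref{prop:SL-characterization}, \Cref{lem:bounded-eigenfunctions}, and equations \eqref{eq:SL-boundary-0} and \eqref{eq:SL-boundary}. Your write-up simply unpacks these references in the natural way, with the only additional ingredient being the explicit invocation of \Cref{prop:r-asolution} to rule out the logarithmic branch and identify $v_a$ with a multiple of $u_{a,0}$, which is implicit in the paper's citation of \Cref{lem:bounded-eigenfunctions}.
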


Note also that the number of negative eigenvalues of $(\mathcal P_a)_\even$ or $(\mathcal P_a)_\odd$ is the $\mathsf{SO}(2)\times\Z_2$-equivariant Morse index of $\Sigma_4(a)\subset E(a,b,b,d)$ as a minimal surface, where $\mathsf{SO}(2)$ acts with orbits \eqref{g-orbits}, and $\Z_2\cong\{\pm 1\}$ acts as $-1\cdot (x_1,x_2,x_3,x_4)\mapsto (-x_1,x_2,x_3,\pm x_4)$ where $\pm$ is $+$ in the even case, and $-$ in the odd case.

\subsection{Spectral analysis}
The spectra of $(\mathcal P_{a})_\even$ and $(\mathcal P_{a})_\odd$ depend not only on  $a$, but also on $b=c$ and $d$, which are omitted to simplify notation. Nevertheless, 
$\lambda_n^\even(a)$ and $\lambda_n^\odd(a)$ satisfy important monotonicity properties on both $a$ and $d$:

\begin{proposition}\label{prop:negdereigenv}
For all $n\geq0$, the $n$-th eigenvalues $\lambda_n^\even$ and $\lambda_n^\odd$ are strictly decreasing real analytic functions  of $a\in(0,+\infty)$, and strictly increasing real analytic functions of $d\in(0,+\infty)$.
\end{proposition}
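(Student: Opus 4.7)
The plan is to prove Proposition \ref{prop:negdereigenv} in two steps: real analyticity, and then strict monotonicity in each parameter.

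For real analyticity, I would invoke the Kato--Rellich theory of analytic perturbations of self-adjoint operators with compact resolvent. The coefficients $p_a(z)$ and $q_a(z)$, together with the weight $p_a$ of the Hilbert space $L^2([0,1],p_a\,\dd z)$ on which $\mathcal L_a$ acts, depend real analytically on $a$ and $d$ by inspection of \eqref{eq:pa-qa}. Since every eigenvalue is simple by \Cref{thm:spectralthsingular}, the classical perturbation theory gives real analytic dependence of each $\lambda_n^\even(a)$ and $\lambda_n^\odd(a)$ on both parameters. Alternatively, one can work directly from \Cref{prop:r-asolution}: eigenvalues of $(\mathcal P_a)_\even$ (respectively, $(\mathcal P_a)_\odd$) are the zeros in $\lambda$ of the real analytic function $u'_{a,\lambda}(0)$ (respectively, $u_{a,\lambda}(0)$), and simplicity of the eigenvalue translates, via a Wronskian computation against the logarithmic-type linearly independent solution supplied by \Cref{prop:r-asolution}, into simplicity of the zero; real analyticity in $a$ and $d$ then follows from the Implicit Function Theorem.

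For strict monotonicity in $a$, I would use the Feynman--Hellmann formula. Choosing a real analytic family of eigenfunctions $v_n(\cdot;a)$ normalized by $\int_0^1 p_a v_n^2\,\dd z=1$, differentiating the identity $\lambda_n(a)=\int_0^1[p_a(v_n')^2+q_a v_n^2]\,\dd z$ in $a$, and using self-adjointness of $\mathcal L_a$ to cancel the terms involving $\dot v_n$ against the derivative of the normalization, yields
$$
\dot\lambda_n(a)=\int_0^1\dot p_a\,(v_n')^2\,\dd z+\int_0^1\dot q_a\,v_n^2\,\dd z-\lambda_n(a)\int_0^1\dot p_a\, v_n^2\,\dd z.
$$
The boundary contributions vanish because $\dot p_a(z)\sim(1-z)^2$ near $z=1$ and because $v_n$ or $v_n'$ vanishes at $z=0$ in the even/odd cases. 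The first two integrals are strictly negative by \eqref{eq:dpaqanegative}. Monotonicity in $d$ is the easier half: since $p_a$ is independent of $d$ and $q_a$ depends on $d$ only through the prefactor $1/d^2$, the analogous computation gives $\partial_d\lambda_n=-\frac{2}{d}\int_0^1 q_a v_n^2\,\dd z>0$, since $q_a<0$.

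The delicate step, and the one I expect to be the main obstacle, is handling the third integral in the formula for $\dot\lambda_n(a)$: its sign is opposite to that of $\lambda_n$, so for positive eigenvalues one cannot simply add contributions term-by-term. My plan to dispose of it is to exploit the pointwise factorization $\dot p_a(z)=-\frac{a(1-z^2)}{h(z,a)}\, p_a(z)$, where $h(z,a)=a^2(1-z^2)+b^2z^2$, and to combine the three integrals into a single expression of the form $-\int_0^1 \alpha(z)\,p_a\bigl[(v_n')^2-\lambda_n v_n^2\bigr]\dd z + \int_0^1\dot q_a v_n^2\dd z$, with $\alpha(z)>0$ on $(0,1)$. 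The weighted Rayleigh-quotient identity $\int_0^1 p_a[(v_n')^2-\lambda_n v_n^2]\,\dd z=-\int_0^1 q_a v_n^2\,\dd z$, together with the explicit algebraic form of $p_a$ and $q_a$ from Section \ref{sec:jacobi}, should then be enough to reduce the sign analysis to a pointwise comparison and conclude $\dot\lambda_n(a)<0$.
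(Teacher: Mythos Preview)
Your approach is essentially the paper's: implicit-function-theorem for real analyticity (the paper does exactly this, via the zeros of $u_{a,\lambda}'(0)$ and $u_{a,\lambda}(0)$), and a Hellmann--Feynman computation for monotonicity.

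You are in fact \emph{more} careful than the paper on one point. The paper differentiates \eqref{eq:lambda=int} in $a$ and records
\[
\frac{\partial\lambda}{\partial a}=\int_0^1 \frac{\partial p_a}{\partial a}(u')^2+\frac{\partial q_a}{\partial a}\,u^2 \;\dd z,
\]
declaring it negative by \eqref{eq:dpaqanegative}. But the computation you do shows the correct formula carries the additional term $-\lambda\int_0^1(\partial_a p_a)\,u^2\,\dd z$, coming from the $a$-dependence of the weight in the normalization \eqref{eq:ualambdanorm}; the paper's displayed formula is only valid at $\lambda=0$. Fortunately this is all that is used downstream (\Cref{prop:degsequences}, \Cref{cor:firstdegen}, \Cref{prop:mainoscresult} only need that each $\lambda_n$ crosses zero with negative $a$-derivative, and there the extra term vanishes). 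Monotonicity in $d$ is unaffected since $p_a$ does not depend on $d$.

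That said, your plan to dispose of the extra term and prove the \emph{full} statement has a gap. Writing $\dot p_a=-\alpha\,p_a$ with $\alpha(z)=a(1-z^2)/h$ as you propose, and eliminating $(u')^2$ via the pointwise identity $p_a(u')^2-\lambda p_a u^2=(p_a u'u)'-q_a u^2$, one arrives after an integration by parts at
\[
\dot\lambda=\int_0^1\Big[-\tfrac{1}{2}(\alpha'p_a)'+\alpha\,q_a+\dot q_a\Big]u^2\,\dd z.
\]
One computes $\alpha\,q_a+\dot q_a=-2a(h^2+b^4z^2)/(d^2h^{5/2})<0$, but $-\tfrac{1}{2}(\alpha'p_a)'$ equals $b^2/a^4>0$ at $z=0$, and for $a^4<b^2d^2/2$ this dominates, so the bracket is \emph{positive} near $z=0$. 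The hoped-for pointwise comparison therefore fails, and the ``Rayleigh-quotient identity plus explicit algebra'' sketch does not close as stated. If you want strict monotonicity for all $a$ (rather than just at the zero-crossings, which suffices for the paper's purposes), a genuinely different argument is needed.
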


\begin{proof}
Let $\overline\lambda\in\R$ be an eigenvalue of either $(\mathcal P_{\overline a})_\even$ or $(\mathcal P_{\overline a})_\odd$. First, we focus on the dependence on the parameter $a$.
We shall apply the Implicit Function Theorem to the equation $u_{a,\lambda}'(0)=0$ near $(\overline a,\overline\lambda)$ in the even case, and $u_{a,\lambda}(0)=0$ in the odd case, where $u_{a,\lambda}\colon [0,1]\to\R$ is the real analytic function defined in Proposition~\ref{prop:r-asolution}.
Multiplying both sides of $\mathcal L_a(u_{a,\lambda})=\lambda\,u_{a,\lambda}$ by $p_a\,u_{a,\lambda}$ and integrating by parts on $[0,1]$, together with \eqref{eq:ualambdanorm} and \eqref{eq:defQa}, yields: 
\begin{equation}\label{eq:lambda=int}
\lambda= p_a(0)u_{a,\lambda}'(0)u_{a,\lambda}(0)+ Q_a(u_{a,\lambda}).
\end{equation}
Differentiating the above with respect to $\lambda$ gives
\begin{multline}\label{eq:1=int}
1=p_a(0)\,\tfrac{\partial u_{a,\lambda}'}{\partial\lambda}(0)\,u_{a,\lambda}(0)+p_a(0)u_{a,\lambda}'(0)\,\tfrac{\partial u_{a,\lambda}}{\partial\lambda}(0) \\
+2\int_0^1  p_a\, u_{a,\lambda}'\left(\tfrac{\partial u_{a,\lambda}}{\partial\lambda}\right)'
+q_a\,u_{a,\lambda}\,\tfrac{\partial u_{a,\lambda}}{\partial\lambda} \; \dd z;
\end{multline}
while, differentiating \eqref{eq:ualambdanorm} with respect to $\lambda$, we have 
$\int_0^1 u_{a,\lambda}\,\tfrac{\partial u_{a,\lambda}}{\partial\lambda}\;p_a\dd z=0$.
Using the above and integration by parts in \eqref{eq:1=int}, recalling that $p_a(1)=0$, we have:
\begin{equation}\label{eq:lastpartialintegr}
\textstyle 1=-p_{a}(0)\,u_{a,\lambda}'(0)\,\frac{\partial u_{a,\lambda}}{\partial\lambda}(0)+p_{a}(0)\,\frac{\partial u_{a,\lambda}'}{\partial\lambda}(0)\,u_{a,\lambda}(0).
\end{equation}
Thus, if $u_{a,\lambda}(0)=0$, then $\frac{\partial u_{a,\lambda}}{\partial\lambda}(0)\ne0$; while if $u'_{a,\lambda}(0)=0$, then $\frac{\partial u_{a,\lambda}'}{\partial\lambda}(0)\ne0$. Therefore, the Implicit Function Theorem applies in both even and odd cases, giving $\varepsilon>0$ and a real analytic function $\lambda\colon(\overline a-\varepsilon,\overline a+\varepsilon)\to\R$ such that $\lambda(a)$ is an eigenvalue of
the corresponding problem $(\mathcal P_{a})$ for all $a\in (\overline a-\varepsilon,\overline a+\varepsilon)$, and $\lambda(\overline a)=\overline\lambda$. 
The derivative of $\lambda(a)$ is computed analogously, by differentiating \eqref{eq:lambda=int} with respect to $a$. Namely, taking \eqref{eq:ualambdanorm} into account, integration by parts yields:
\begin{equation}\label{eq:derlambdaa}
\frac{\partial\lambda}{\partial a}=\int_0^1 \frac{\partial p_a}{\partial a}\big(u_{a,\lambda(a)}'\big)^2+\frac{\partial q_a}{\partial a}\,u_{a,\lambda(a)}^2 \;\mathrm dz <0,
\end{equation}
both in the even and odd cases, and the last inequality follows from \eqref{eq:dpaqanegative}.
Furthermore, each of the functions $\lambda$, i.e., $\lambda^\even_n$ and $\lambda^\odd_n$,
are locally bounded near any $a\in(0,+\infty)$ by Proposition~\ref{thm:spectralthsingular}, so, by the above, they are globally defined strictly decreasing real analytic functions of $a\in(0,+\infty)$.

Similarly, the above arguments can be easily adapted to show that $\lambda^\even_n$ and $\lambda^\odd_n$ are globally defined real analytic functions of the parameter $d\in (0,+\infty)$. 
From \eqref{eq:lambda=int} and \eqref{eq:pa-qa}, we compute their derivative in $d$ analogously to \eqref{eq:derlambdaa}, obtaining:
\begin{equation*}
\frac{\partial\lambda}{\partial d}=\int_0^1\frac{\partial q_a}{\partial d}\,u_{a,\lambda}^2 \;\dd z=-\frac2d\int_0^1q_a\,u_{a,\lambda}^2 \;\dd z >0.\qedhere
\end{equation*}
\end{proof}

\begin{proposition}\label{prop:degsequences}
The set of instants $a>0$ such that $\lambda=0$ is an eigenvalue of problem $(\mathcal P_a)_\even$, respectively $(\mathcal P_a)_\odd$, cf.~Proposition~\ref{prop:char-degeneracies}, is an unbounded strictly increasing sequence, that we denote $(a_n^\even)_{n\geq1}$, respectively $(a_n^\odd)_{n\geq0}$.
\end{proposition}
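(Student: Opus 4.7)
The plan is to combine the strict monotonicity of the eigenvalues $\lambda_n^\even(a)$ and $\lambda_n^\odd(a)$ in $a$, established in \Cref{prop:negdereigenv}, with the divergence of the spectra at each fixed $a$ from \Cref{thm:spectralthsingular}, and the uniform asymptotics $p_a\to 0$, $q_a\to-\infty$ from \eqref{eq:paqalimit}.

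First I would observe that, since each $\lambda_n^\even$ and $\lambda_n^\odd$ is real analytic and strictly decreasing in $a\in(0,+\infty)$, every eigenvalue function has at most one zero. Hence the set of degeneracy instants is a disjoint union over $n\geq 0$ of at most one point per $n$. To deduce local finiteness (and thus discreteness in $(0,+\infty)$), I would fix a compact interval $[a_1,a_2]\subset(0,+\infty)$ and note that $\lambda_n^\even$ can have a zero in $[a_1,a_2]$ only if $\lambda_n^\even(a_1)\geq 0\geq\lambda_n^\even(a_2)$; the first condition fails for all but finitely many $n$, since $\lambda_n^\even(a_1)\nearrow+\infty$ by \Cref{thm:spectralthsingular}. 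The same reasoning applies in the odd case.

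For unboundedness, the core idea is to show that the Morse index $\#\{n:\lambda_n^\even(a)<0\}$ (and its odd counterpart) diverges as $a\nearrow+\infty$. Given $N\in\N$, I would partition $[\tfrac14,\tfrac34]$ into $N$ disjoint closed subintervals and pick a smooth bump function $\varphi_k$ supported in the interior of each. Extended by zero to $[0,1]$, these $\varphi_k$ lie in both $V_\even$ and $V_\odd$, are mutually orthogonal in $L^2([0,1],p_a\,\dd z)$, and by \eqref{eq:paqalimit} satisfy $Q_a(\varphi_k)\to-\infty$ as $a\to+\infty$, since $p_a\to 0$ and $q_a\to-\infty$ uniformly on the compact set $[\tfrac14,\tfrac34]$ supporting the $\varphi_k$. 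Hence $Q_a$ is negative-definite on their $N$-dimensional span for $a$ large, and by \Cref{thm:spectralthsingular} this forces $\lambda_{N-1}^\even(a)<0$ and $\lambda_{N-1}^\odd(a)<0$ for all sufficiently large $a$.

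Finally, given any $A>0$, I would choose $N$ large enough that $\lambda_{N-1}^\even(A)>0$, which is possible because $\lambda_n^\even(A)\nearrow+\infty$; combining with the previous step and the Intermediate Value Theorem, $\lambda_{N-1}^\even$ must have a (unique) zero in $(A,+\infty)$, and analogously for $\lambda_{N-1}^\odd$. Since $A$ is arbitrary, each degeneracy set contains arbitrarily large elements, and ordering them produces the claimed strictly increasing unbounded sequences. The main technical point will be confirming the uniform convergence $p_a\to 0$ and $q_a\to-\infty$ on $[\tfrac14,\tfrac34]$, but this is essentially immediate from the explicit formulas \eqref{eq:pa-qa} together with \eqref{eq:minmaxpaqa}. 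The indexing conventions ($n\geq 1$ in the even case versus $n\geq 0$ in the odd case) presumably reflect which of the individual $\lambda_n$ actually acquire a zero versus remaining of one sign throughout $(0,+\infty)$, but identifying this distinction precisely is not needed for the existence and unboundedness assertion itself.
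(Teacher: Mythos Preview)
Your approach is essentially the same as the paper's: both arguments establish local finiteness via monotonicity of the eigenvalue curves combined with the divergence of the spectrum at fixed $a$, and both establish unboundedness by building large-dimensional negative subspaces for $Q_a$ out of test functions with disjoint supports (you use smooth bumps on $[\tfrac14,\tfrac34]$, the paper uses piecewise affine tents and a density argument). The only cosmetic difference is that the paper phrases discreteness as the zero set of the nonconstant real analytic function $a\mapsto u_{a,0}'(0)$ (resp.\ $u_{a,0}(0)$), whereas you argue directly from the strict monotonicity of each $\lambda_n$.

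One small slip to fix: in your local finiteness step you write that ``the first condition fails for all but finitely many $n$, since $\lambda_n^\even(a_1)\nearrow+\infty$.'' In fact the first condition $\lambda_n^\even(a_1)\geq 0$ \emph{holds} for all but finitely many $n$; it is the \emph{second} condition $\lambda_n^\even(a_2)\leq 0$ that fails for all but finitely many $n$, since $\lambda_n^\even(a_2)\nearrow+\infty$. The conclusion is unaffected.
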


\begin{proof}
By Proposition~\ref{prop:r-asolution}, the sets in question are the zero sets of the real analytic functions $(0,+\infty)\ni a\mapsto u_{a,0}'(0)$, respectively $(0,+\infty)\ni a\mapsto u_{a,0}(0)$. These are closed discrete subsets of $(0,+\infty)$, since the above functions are nonconstant by Proposition~\ref{prop:negdereigenv}.
Furthermore, we claim that for all $\overline a>0$, each of these sets contains only finitely many points in the interval $(0,\overline a)$. If not, the corresponding problem $(\mathcal P_{\overline a})$ would have infinitely many negative eigenvalues, by the monotonicity property in $a$ from Proposition~\ref{prop:negdereigenv}. However, this contradicts the fact that its spectrum is closed, discrete and bounded from below (Proposition~\ref{thm:spectralthsingular}).

It only remains to show that the sets in question are unbounded.
Using again the monotonicity of the eigenvalues in $a$ from Proposition~\ref{prop:negdereigenv}, it suffices to show that the corresponding problem $(\mathcal P_a)$ has arbitrarily many negative eigenvalues
as $a\nearrow+\infty$. This follows by exhibiting subspaces of $V_\even$ and $V_\odd$ on which the quadratic form $Q_a$ defined in \eqref{eq:defQa} is negative-definite, whose dimension can be made arbitrarily large if $a$ is sufficiently large. (This was also observed by Haslhofer--Ketover~\cite[Prop.~9.3]{hk}.)
Namely, for $\alpha,\delta,\varepsilon>0$ with $\alpha+2\delta+\varepsilon<1$,\linebreak let $\xi_{\alpha,\delta,\varepsilon}\colon[0,1]\to[0,1]$ denote the piecewise affine function that is equal to $1$ on $[\alpha+\delta,\alpha+\delta+\varepsilon]$ and vanishes on $[0,\alpha]\cup[\alpha+2\delta+\varepsilon,1]$.
By adjusting $\alpha$, $\delta$ and $\varepsilon$ judiciously, one can construct an arbitrarily large number of $\xi_{\alpha,\delta,\varepsilon}$ with pairwise disjoint supports. 
Using either \eqref{eq:minmaxpaqa} or \eqref{eq:paqalimit}, it is easy to see that
$Q_a(\xi_{\alpha,\delta,\varepsilon})\leq -1< 0$, for all $\alpha,\delta,\varepsilon$ as above and $a$ sufficiently large, thus $Q_a$ is negative-definite on the subspace of $W^{1,2}([0,1],\R)$ spanned by these functions. 
Thus, by standard density arguments, $Q_a$ is also negative-definite on subspaces of $V_\even \cap V_\odd$ with arbitrarily large dimension, provided that $a$ is sufficiently large.
\end{proof}

In the sequel, we will also need the following result similar to Proposition~\ref{prop:negdereigenv}:

\begin{lemma}\label{lemma:deranonzero}
For all $\lambda\in\mathds R$, the zeros of $a\mapsto u_{a,\lambda}(0)$ and $a\mapsto u'_{a,\lambda}(0)$ are simple. 
\end{lemma}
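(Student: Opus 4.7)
The plan is to derive this lemma as a direct consequence of two ingredients already present in the proof of \Cref{prop:negdereigenv}: the nondegeneracy in $\lambda$ encoded in \eqref{eq:lastpartialintegr}, and the strict monotonicity of the eigenvalue branches in $a$. The whole argument is a two-variable Implicit Function Theorem chase.

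Fix $\lambda_0\in\R$ and suppose $u_{\bar a,\lambda_0}(0)=0$; the case $u'_{\bar a,\lambda_0}(0)=0$ is handled analogously at the end. Since $u_{\bar a,\lambda_0}$ is a nontrivial real analytic solution of $\mathcal L_{\bar a}(v)=\lambda_0\,v$ that vanishes at $z=0$ and extends smoothly across the singular endpoint $z=1$, it lies in $V_\odd$, so $\lambda_0$ belongs to the spectrum of $(\mathcal P_{\bar a})_\odd$ from \Cref{thm:spectralthsingular}; by simplicity and ordering, $\lambda_0=\lambda_k^\odd(\bar a)$ for a uniquely determined $k\geq 0$. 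Moreover, by uniqueness of ODE solutions at the regular point $z=0$, one cannot have $u_{\bar a,\lambda_0}(0)=u'_{\bar a,\lambda_0}(0)=0$; hence \eqref{eq:lastpartialintegr} evaluated at $(\bar a,\lambda_0)$ forces $\partial_\lambda u_{a,\lambda}(0)|_{(\bar a,\lambda_0)}\neq 0$.

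The real analytic Implicit Function Theorem then produces a unique real analytic function $\mu(a)$ defined near $\bar a$, with $\mu(\bar a)=\lambda_0$ and $u_{a,\mu(a)}(0)\equiv 0$, which locally parametrizes the zero set of $(a,\lambda)\mapsto u_{a,\lambda}(0)$ in a neighborhood of $(\bar a,\lambda_0)$. By construction $\mu(a)$ is an eigenvalue of $(\mathcal P_a)_\odd$, and continuity combined with the simplicity and ordering of the spectrum forces the identification $\mu(a)=\lambda_k^\odd(a)$ near $\bar a$. Applying \Cref{prop:negdereigenv} now gives $\mu'(\bar a)=(\lambda_k^\odd)'(\bar a)<0$, and the implicit function formula yields
\begin{equation*}
\frac{\partial}{\partial a}u_{a,\lambda_0}(0)\Big|_{a=\bar a} \,=\, -\mu'(\bar a)\,\partial_\lambda u_{a,\lambda}(0)\big|_{(\bar a,\lambda_0)}\,\neq\, 0,
\end{equation*}
which is exactly the simplicity of the zero $\bar a$. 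The case of $a\mapsto u'_{a,\lambda_0}(0)$ is entirely parallel, using $V_\even$ and the branches $\lambda_n^\even(a)$ in place of $V_\odd$ and $\lambda_k^\odd(a)$, together with the companion half of \eqref{eq:lastpartialintegr} asserting that $\partial_\lambda u'_{a,\lambda}(0)\neq 0$ whenever $u'_{a,\lambda}(0)=0$.

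There is no genuine obstacle here: the lemma is essentially a corollary of \Cref{prop:negdereigenv}, and no new estimate is required. The only step needing minor care is the identification of the local IFT graph $\mu(a)$ with a specific eigenvalue branch, but this follows from uniqueness in the IFT combined with the simplicity and orderedness of the spectra guaranteed by \Cref{thm:spectralthsingular}.
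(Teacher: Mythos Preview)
Your proof is correct. It differs from the paper's in organization rather than in substance: you combine the $\lambda$-Wronskian identity \eqref{eq:lastpartialintegr} with the strict monotonicity of the eigenvalue branches via an Implicit Function Theorem argument, whereas the paper repeats the same integration-by-parts computation that led to \eqref{eq:lastpartialintegr} but differentiates \eqref{eq:lambda=int} in $a$ instead of $\lambda$, obtaining directly
\[
p_a(0)\,u_{a,\lambda}(0)\,\tfrac{\partial}{\partial a}u'_{a,\lambda}(0)-p_a(0)\,u'_{a,\lambda}(0)\,\tfrac{\partial}{\partial a}u_{a,\lambda}(0)=-\int_0^1 \tfrac{\partial p_a}{\partial a}(u'_{a,\lambda})^2+\tfrac{\partial q_a}{\partial a}(u_{a,\lambda})^2\,\dd z>0,
\]
from which simplicity of the zeros is immediate. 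The paper's route is shorter and avoids the detour through identifying the IFT graph with a specific eigenvalue branch; your route has the virtue of making explicit that the lemma is really a corollary of \Cref{prop:negdereigenv}, since both the nonvanishing of $\partial_\lambda u_{a,\lambda}(0)$ and the strict monotonicity $\mu'(\bar a)<0$ were established there.
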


\begin{proof}
Arguing as in the proof of Proposition~\ref{prop:negdereigenv}, differentiating \eqref{eq:lambda=int}  with respect to $a$, and integrating by parts yields:
\begin{multline*}
p_a(0)u_{a,\lambda}(0) \tfrac\partial{\partial a}u'_{a,\lambda}(0) -p_a(0)u'_{a,\lambda}(0) \tfrac\partial{\partial a}u_{a,\lambda}(0)\\=-\int_0^1 \tfrac{\partial p_a}{\partial a}(u_{a,\lambda}')^2+\tfrac{\partial q_a}{\partial a}(u_{a,\lambda})^2\;\dd z >0,
\end{multline*}
where the inequality follows from \eqref{eq:dpaqanegative}.
The conclusion follows, as $p_a(0)=\frac{1}{a}$.
\end{proof}

Let us analyze two special situations in which further information on the spectra of $(\mathcal P_a)_\even$ and $(\mathcal P_a)_\odd$ may be inferred from the presence of additional symmetries. 
The first   is $a=b=c=d$, in which case $(S^3,\g_a)$ is a round $3$-sphere of radius $a$, centered at the origin in $\R^4$. In particular, $\Sigma_4(a)$ is an equator and isometric to a round $2$-sphere of radius $a$, so its Laplacian $\Delta$ has eigenvalues 
$\tfrac{1}{a^2} k(k+1)$, for $k\in\N_0$.
The Ricci curvature of $(S^3,\g_a)$ is constant and equal to $\frac{2}{a^2}$ in all unit directions, cf.~\eqref{eq:ricci}, so the Jacobi operator on $\Sigma_4(a)$ is $\Delta-\frac{2}{a^2}$, and its eigenvalues~are:
\begin{equation*}
\tfrac{1}{a^2} (k(k+1)-2), \text{ for all } k\in\N_0.
\end{equation*}
Setting $k=0$, we have the simple eigenvalue $-\frac{2}{a^2}$, whose eigenspace is spanned by constant functions, induced by translations in the $x_4$-direction. Setting $k=1$, we have the null eigenvalue, of multiplicity $3$, whose eigenspace is spanned by Jacobi fields induced by the Killing fields of $(S^3,\g_a)$ given by rotations preserving~$x_4=0$.

By the arguments leading to Proposition~\ref{prop:SL-characterization}, a \emph{radial} function 
$\psi\colon\Sigma_4^+(a)\to\R$, $\psi(\rho,\theta)=\psi(\rho)$, is an eigenfunction of the (restriction to $\Sigma_4^+(a)$ of the) Jacobi operator $\Delta-\frac{2}{a^2}$ with eigenvalue $\mu$ if and only if $v(z)=\psi(\rho(\phi(z)))$ solves
\begin{equation}\label{eq:Jacobi-ODE-geometric}
\textstyle -\frac{\dd}{\dd z}\!\left( \frac{1-z^2}{a}\frac{\dd}{\dd z} v(z) \right)- \frac2a\, v(z) =\mu \,a\, v(z), \quad z\in [0,1),
\end{equation}
where $\rho(\phi(z))=a\,\arccos z$, i.e., $z=\cos\frac{\rho}{a}$.
The left-hand side of the above ODE is precisely $p_a\mathcal L_a(v)$, since the functions in \eqref{eq:pa-qa} simplify to $p_a(z)=\frac{1-z^2}{a}$ and $q_a(z)=-\frac{2}{a}$, if one assumes $a=b=c=d$. Studying the above ODE leads us to:

\begin{proposition}\label{prop:firsteigenvalues}
If $a=b=c=d$, then $\lambda_0^\even(a)<0$, and $\lambda_0^\odd(a)=0$.
\end{proposition}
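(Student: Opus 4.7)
The plan is to exploit the dramatic simplification of $p_a$ and $q_a$ in the round case $a=b=c=d$: namely $p_a(z) = (1-z^2)/a$ and $q_a(z) = -2/a$, so the equation $\mathcal L_a(v)=0$, i.e., $-(p_a v')'+q_a v=0$, reduces after clearing the constant $1/a$ to the classical Legendre equation $(1-z^2)v'' - 2zv' + 2v = 0$ with parameter $n=1$. Its solutions are spanned by the polynomial $P_1(z) = z$ and the Legendre function of second kind $Q_1(z) = \tfrac{z}{2}\log\tfrac{1+z}{1-z}-1$, the latter logarithmically singular at $z=1$. By \Cref{prop:r-asolution}, $u_{a,0}(z)$ must therefore be a positive constant multiple of $z$.

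For the odd claim $\lambda_0^\odd(a) = 0$, I would observe that $u_{a,0}(0) = 0$ and $\lim_{z\nearrow 1} p_a(z) u_{a,0}'(z) = 0$, so $u_{a,0} \in V_\odd$ and hence $\lambda=0$ is an eigenvalue of $(\mathcal P_a)_\odd$. Since $u_{a,0}(z)=cz$ has no zeros in the open interval $(0,1)$, the oscillation statement in \Cref{thm:spectralthsingular} identifies it as the ground-state eigenfunction, giving $\lambda_0^\odd(a) = 0$. For the even claim $\lambda_0^\even(a) < 0$, I would test the quadratic form $Q_a$ against the constant function $v \equiv 1 \in V_\even$, which trivially satisfies $v'(0)=0$ and $p_a v' \equiv 0$. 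A direct computation gives
\begin{equation*}
Q_a(1)=\int_0^1 q_a(z)\,\dd z=-\tfrac{2}{a}<0,
\end{equation*}
exhibiting a one-dimensional subspace of $V_\even$ on which $Q_a$ is negative-definite; by the variational characterization of the number of negative eigenvalues in \Cref{thm:spectralthsingular}, $(\mathcal P_a)_\even$ has at least one negative eigenvalue, hence $\lambda_0^\even(a) < 0$.

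No serious obstacle is anticipated: the Legendre reduction is the crux of the argument, and once it is in hand, both claims follow from the Sturm--Liouville toolkit already established in Section~\ref{sec:singularSL}. The only minor points to verify are that the candidate functions $v(z)=cz$ and $v\equiv 1$ lie in the appropriate spaces $V_\odd$ and $V_\even$, respectively, and that the strict inequality in the even case does not accidentally become equality through some cancellation; both are immediate from the explicit formulas.
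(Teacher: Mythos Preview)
Your proposal is correct and follows essentially the same approach as the paper: for the odd case, you both identify the solution $v(z)=cz$ (which you nicely recognize as the Legendre polynomial $P_1$) as lying in $V_\odd$ with no zeros in $(0,1)$, hence the ground state; for the even case, you both test the quadratic form $Q_a$ against the constant function $v\equiv 1\in V_\even$ to obtain $Q_a(1)<0$. The Legendre-equation framing is a pleasant addition but not a substantive departure.
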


\begin{proof}
The function $v(z)=a z$ belongs to $V_\odd$ and solves the ODE \eqref{eq:Jacobi-ODE-geometric} with $\mu=0$, which coincides with the ODE in $(\mathcal P_a)_\odd$ with $\lambda=0$.  
Since it has no zeros in $(0,1)$, by Proposition~\ref{thm:spectralthsingular}, this is the $0$-th odd eigenfunction, i.e., $\lambda_0^\odd(a)=0$.

The constant function $v(z)\equiv 1$ belongs to $V_\even$ and solves the ODE \eqref{eq:Jacobi-ODE-geometric} with $\mu=-\frac{2}{a^2}$. Although this ODE, which can be rewritten as $p_a\mathcal L_a(v)=-\frac{2}{a} v$, \emph{does~not} coincide with $\mathcal L_a(v)=\lambda\,v$, both ODEs have the same number of negative eigenvalues. More concretely, by \eqref{eq:minmaxpaqa}, the quadratic form $Q_a$ is negative-definite on the subspace spanned by $v(z)\equiv 1$, thus $\lambda_0^\even(a)<0$ by Proposition~\ref{thm:spectralthsingular}.
\end{proof}

\begin{corollary}\label{cor:even<odd}
The inequality $\lambda_0^\even(a)<\lambda_0^\odd(a)$ holds for all $a>0$.
\end{corollary}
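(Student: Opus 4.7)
My plan is to prove the inequality by a direct Wronskian computation comparing first eigenfunctions of the two problems, exploiting the fact that the endpoint conditions differ only at $z=0$: the even condition kills $\phi_\even'(0)$, while the odd condition kills $\phi_\odd(0)$. Let $\phi_\even\in V_\even$ and $\phi_\odd\in V_\odd$ be first eigenfunctions of $(\mathcal P_a)_\even$ and $(\mathcal P_a)_\odd$, with eigenvalues $\lambda_0^\even(a)$ and $\lambda_0^\odd(a)$, respectively. By the oscillation statement in \Cref{thm:spectralthsingular}, neither has any zeros in $(0,1)$, so I would normalize both to be positive on that interval. Uniqueness of solutions to the second-order linear ODE $\mathcal L_a v=\lambda\,v$ then forces $\phi_\even(0)>0$ (otherwise $\phi_\even(0)=\phi_\even'(0)=0$ would give $\phi_\even\equiv0$) and $\phi_\odd'(0)>0$ (otherwise $\phi_\odd(0)=\phi_\odd'(0)=0$ would give $\phi_\odd\equiv 0$).

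The main step is to introduce the Wronskian-type quantity
\[
W(z):=p_a(z)\bigl(\phi_\even(z)\,\phi_\odd'(z)-\phi_\even'(z)\,\phi_\odd(z)\bigr).
\]
Using $-(p_a\phi_\bullet')'+q_a\,\phi_\bullet=\lambda_0^\bullet(a)\,p_a\,\phi_\bullet$ for $\bullet\in\{\even,\odd\}$ and expanding $W'(z)$, the $q_a$ terms cancel and one gets the clean identity
\[
W'(z)=\bigl(\lambda_0^\even(a)-\lambda_0^\odd(a)\bigr)\,p_a(z)\,\phi_\even(z)\,\phi_\odd(z).
\]
I would then integrate this over $[0,1]$ and evaluate the boundary terms. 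At $z=1$, the singular endpoint causes no trouble: $p_a(1)=0$ gives $W(1)=0$. At $z=0$, the ``orthogonality'' of the two boundary conditions leaves only $W(0)=p_a(0)\,\phi_\even(0)\,\phi_\odd'(0)$, and this is strictly positive since $p_a(0)=1/a>0$ and the values $\phi_\even(0)$, $\phi_\odd'(0)$ were just shown to be positive.

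Putting this together,
\[
-p_a(0)\,\phi_\even(0)\,\phi_\odd'(0)\;=\;\bigl(\lambda_0^\even(a)-\lambda_0^\odd(a)\bigr)\int_0^1 p_a(z)\,\phi_\even(z)\,\phi_\odd(z)\,\dd z.
\]
The integrand on the right is strictly positive on $(0,1)$, so the right-hand side has the same sign as $\lambda_0^\even(a)-\lambda_0^\odd(a)$, while the left-hand side is strictly negative. This forces $\lambda_0^\even(a)<\lambda_0^\odd(a)$.

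I do not expect any serious obstacle: the argument is essentially a Sturm comparison between Neumann-like and Dirichlet-like conditions at $z=0$, and the only feature that could raise a concern—the regular singularity at $z=1$—is exactly what makes the boundary term there vanish for free. The sign determination at $z=0$ is dictated entirely by the structure of $V_\even$ versus $V_\odd$ together with ODE uniqueness for $\mathcal L_a$.
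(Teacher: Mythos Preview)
Your Wronskian argument is correct and complete. The integration-by-parts identity $W'=(\lambda_0^\even-\lambda_0^\odd)\,p_a\,\phi_\even\,\phi_\odd$ is right, the boundary term at $z=1$ vanishes because eigenfunctions extend smoothly there (\Cref{lem:bounded-eigenfunctions}) and $p_a(1)=0$, and your sign analysis at $z=0$ is exactly the standard Sturm comparison between a Neumann-type and a Dirichlet-type condition at a regular endpoint.

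This is, however, a genuinely different route from the paper's. The paper does not compare the two eigenfunctions directly. Instead, it first anchors the inequality at the round sphere $a=b=c=d$, where \Cref{prop:firsteigenvalues} computes $\lambda_0^\even<0=\lambda_0^\odd$ explicitly, and then argues by continuity in all parameters: if the inequality ever failed, the two eigenvalues would have to coincide for some parameter values, but that is impossible because a common eigenfunction would satisfy $v(0)=v'(0)=0$ and hence vanish identically. Both proofs ultimately rest on the same ODE-uniqueness fact at $z=0$; the paper uses it to rule out a crossing, while you use it to pin down the sign of $W(0)$. Your argument is more direct and self-contained---it works for every $a,b,d$ at once without needing the round-sphere computation as an anchor---whereas the paper's argument trades that computation for a softer continuity step and no integration.
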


\begin{proof}
The stated inequality holds if $a=b=c=d$ by Proposition~\ref{prop:firsteigenvalues}. 
Since the eigenvalues $\lambda_0^\even$ and $\lambda_0^\odd$ depend continuously on all parameters $a,b,c,d$, cf.~Proposition~\ref{prop:negdereigenv}, if this inequality were to fail for some $a,b,c,d$, then there would exist values of those parameters for which $\lambda_0^\even(a)=\lambda_0^\odd(a)=\lambda$ is simultaneously an eigenvalue for $(\mathcal P_a)_\even$ and $(\mathcal P_a)_\odd$. This is  impossible, because the only solution to $\mathcal L_a(v)=\lambda \, v$ with $v(0)=v'(0)=0$ is clearly $v\equiv 0$.
\end{proof}

\begin{remark}
From the proof of Proposition~\ref{prop:firsteigenvalues} and \eqref{eq:SL-boundary}, if $a=b=c=d$, then \eqref{eq:v_a} is given by $v_a(z)=a z$.
Since $\int_0^1 v_a(z)^2\;p_a(z)\dd z=\frac{2a}{15}$, the function $u_{a,0}(z)$ in Proposition~\ref{prop:r-asolution}, which is a constant multiple of $v_a(z)$ satisfying \eqref{eq:ualambdanorm}, is given by 
\begin{equation}\label{eq:ua0round}
u_{a,0}(z)=\sqrt{\tfrac{15a}{2}} \,z.
\end{equation}
Although all rotations preserving the hyperplane $x_4=0$ induce Jacobi fields on $\Sigma_4(a)$, 
only those in the $(x_2,x_3)$-plane induce Jacobi fields that are \emph{radial} functions on $\Sigma_4(a)$. This explains the drop in 
multiplicity from $3$ to $1$ when comparing the spectrum of the Jacobi operator $\Delta-\frac{2}{a^2}$ on $\Sigma_4(a)$ and the spectrum of $(\mathcal P_a)_\odd$.
\end{remark}

\begin{remark}\label{rem:unbounded-solutions}
It is easy to find \emph{all solutions} to \eqref{eq:Jacobi-ODE-geometric} for the above values of $\mu$:
\begin{equation*}
\begin{aligned}
\mu=0: \qquad u(z)&=C_1\, z + C_2\,\big(1-z \operatorname{arctanh}(z)\big), \\
\mu=-\tfrac{2}{a^2}: \qquad u(z)&=C_1\, + C_2\,\operatorname{arctanh}(z), 
\end{aligned}
\end{equation*}
where $C_1,C_2\in\R$, 
and $\operatorname{arctanh}(z)=\log\sqrt\frac{1+z}{1-z}$, cf.~Proposition~\ref{prop:r-asolution}.
The above are real analytic on $[0,1]$ if and only if they are bounded; i.e., if and only if $C_2=0$.
\end{remark}

The second special case we analyze is when
$a=d$ and $b=c$, but these two values need not coincide. In this situation, the same conclusion of Proposition~\ref{prop:firsteigenvalues} holds:

\begin{proposition}\label{prop:firsteigenvalues2}
If $a=d$ and $b=c$, then $\lambda_0^\even(a)<0$, and $\lambda_0^\odd(a)=0$.
\end{proposition}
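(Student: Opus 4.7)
The plan is to mimic the strategy of \Cref{prop:firsteigenvalues}, exploiting the additional rotational symmetry present when $a=d$. As noted in \Cref{rem:onlyplanars}, when $a=d$ the ellipsoid $E(a,b,b,a)$ admits an extra one-parameter group of isometries, namely rotations in the $(x_1,x_4)$-plane, generated by the Killing vector field $X=x_4\,\partial_{x_1}-x_1\,\partial_{x_4}$. Along the totally geodesic $2$-sphere $\Sigma_4(a)\subset(S^3,\g_a)$, one has $X|_{\Sigma_4(a)}=-x_1\,\partial_{x_4}$, which is purely normal, with inner product $\g_a(X,\vec n)=-x_1$ against the unit normal $\vec n=(0,0,0,1)$. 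This is a radial function on $\Sigma_4^+(a)$, and by \Cref{prop:SL-characterization} it should correspond to a solution $v(z)=-az$ of $\mathcal L_a(v)=0$, i.e., up to a nonzero constant, to $v(z)=z$.

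The verification is a short direct computation: with $v(z)=z$, so $v'(z)=1$, one finds
\begin{equation*}
-(p_a v')'+q_a\,v \;=\; \frac{z\,\bigl(a^2(1-z^2)+b^2(1+z^2)\bigr)}{\bigl(a^2(1-z^2)+b^2 z^2\bigr)^{3/2}}\left(1-\frac{a^2}{d^2}\right),
\end{equation*}
which vanishes precisely when $a=d$. The function $v(z)=z$ trivially satisfies $v(0)=0$ and $\lim_{z\nearrow 1}p_a(z)v'(z)=0$, so it lies in $V_\odd$, and it has no zeros in $(0,1)$. By the oscillation part of \Cref{thm:spectralthsingular}, the $n$-th odd eigenfunction has exactly $n$ interior zeros, so $v$ must be a scalar multiple of the $0$-th odd eigenfunction, yielding $\lambda_0^\odd(a)=0$.

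For the even statement, I would then invoke \Cref{cor:even<odd}, which gives $\lambda_0^\even(a)<\lambda_0^\odd(a)=0$ without further work. The only step requiring any computation is the verification that $z$ lies in $\ker\mathcal L_a$ when $a=d$; the real content of the proposition is the geometric identification of the symmetry-induced Jacobi field, and I would not expect any serious obstacle once that candidate is in hand.
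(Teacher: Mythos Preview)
Your proof is correct and follows the same overall strategy as the paper: identify the Jacobi field coming from the Killing field of rotations in the $(x_1,x_4)$-plane, show it gives a nowhere-vanishing solution in $V_\odd$, conclude $\lambda_0^\odd(a)=0$ via the oscillation theorem, and then invoke \Cref{cor:even<odd} for the even part.

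The one genuine difference is that you compute the eigenfunction explicitly as $v(z)=z$ and verify $\mathcal L_a(z)=0$ by direct calculation, whereas the paper argues purely geometrically that the Killing-induced Jacobi field yields a real analytic solution $v_a$ with $v_a(0)=0$ and $v_a(z)\neq 0$ on $(0,1]$, without writing it down. In fact, the paper's parenthetical comment suggests the authors believed the explicit form was available only under the additional hypothesis $a=b$ (as in \Cref{prop:firsteigenvalues}); your computation shows this is not so, since $v(z)=z$ works whenever $a=d$, for arbitrary $b$. This buys you a cleaner verification of the no-zeros property (trivial for $v(z)=z$), at the cost of a short calculation, while the paper's argument is computation-free but relies on the geometric picture to infer that the Jacobi field does not vanish on $(0,1]$.
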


\begin{proof}
If $a=d$, rotations in the $(x_1,x_4)$-plane induce isometries of $\Omega_a$ that fix $O$, and 
map $\gamma_\hor=\gamma_{a,0}$ to $\gamma_{a,s}$ for any $s\in\R$, cf.~Theorem~\ref{thm:existence_gammas}. The associated Killing field induces a Jacobi field along $\gamma_\hor$, which, by Proposition~\ref{prop:SL-characterization} and \eqref{eq:SL-boundary}, determines a real analytic solution $v_a\colon [0,1]\to\R$ to $\mathcal L_a(v)=0$ with $v_a(0)=0$, $v_a'(1)=v_a(1)=a$, and $v_a(z)\neq 0$ for all $0<z\leq 1$, cf.~also Proposition~\ref{prop:char-degeneracies}.
Thus, by Proposition~\ref{thm:spectralthsingular}, this is the $0$-th eigenfunction of $(\mathcal P_a)_\odd$, i.e., $\lambda_0^\odd(a)=0$. (Note that the same argument proves the corresponding claim in Proposition~\ref{prop:firsteigenvalues}, but the extra assumption $a=b$ made there allows to easily find the eigenfunction $v_a(z)$ explicitly.) The claim that $\lambda_0^\even(a)<0$ now follows from Corollary~\ref{cor:even<odd}.
\end{proof}

\begin{corollary}\label{cor:firstdegen}
For any fixed $b=c>0$ and $d>0$, the first degeneracy instant for $f_\odd(a,0)=0$ is $a_0^\odd=d$, cf.~Propositions~\ref{prop:char-degeneracies} and \ref{prop:degsequences}. This instant is trivially a bifurcation instant, since $f_\odd(d,s)=0$ for all $s\in \mathcal I$.
\end{corollary}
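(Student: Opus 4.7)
The plan is to prove the two assertions separately: first that the smallest degeneracy instant of $f_\odd$ equals $d$, and second that this particular instant is automatically a bifurcation instant because of an $a=d$ rotational symmetry that forces $f_\odd(d,\cdot)\equiv 0$.

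For the identification $a_0^\odd=d$, I would combine three previously established facts. By \Cref{prop:char-degeneracies}, the degeneracy instants of $f_\odd(a,0)=0$ are exactly the values of $a$ where $\lambda=0$ lies in the spectrum of $(\mathcal P_a)_\odd$, and by \Cref{prop:degsequences} these form an unbounded strictly increasing sequence $(a_n^\odd)_{n\geq 0}$. \Cref{prop:firsteigenvalues2} (applied with the ambient hypothesis $b=c$ of the paper) says $\lambda_0^\odd(d)=0$. So $d$ is a degeneracy instant. To see it is the \emph{first} one, I would invoke the strict monotonicity from \Cref{prop:negdereigenv}: for every $n\geq 0$, the function $a\mapsto\lambda_n^\odd(a)$ is strictly decreasing. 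Therefore, for any $a<d$, one has $\lambda_n^\odd(a)>\lambda_n^\odd(d)\geq\lambda_0^\odd(d)=0$ for all $n\geq 0$, so $0$ is not an eigenvalue of $(\mathcal P_a)_\odd$ and no degeneracy occurs in $(0,d)$.

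For the trivial bifurcation statement $f_\odd(d,s)=0$ for all $s\in\mathcal I$, I would exploit the additional symmetry arising precisely when $a=d$, as already noted in \Cref{rem:onlyplanars} and used in the proof of \Cref{prop:firsteigenvalues2}. When $a=d$, rotations in the $(x_1,x_4)$-plane are isometries of $(S^3,\g_a)$ that commute with the $\O(2)$-action and hence descend to isometries of $\Omega_d$ fixing the point $O$. These rotations send $\gamma_\hor=\gamma_{d,0}$ to a one-parameter family of geodesics that are still orthogonal to $\partial\Omega_d$ at one endpoint and that still pass through $O$. A rotation of angle $\theta$ sends $\beta(0)=(d,0,0)$ to $(d\cos\theta,0,d\sin\theta)=\beta(\theta)$, so by the uniqueness statement in \Cref{thm:existence_gammas} the rotated geodesic coincides (up to reparametrization) with $\gamma_{d,\theta}$. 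Consequently $\sigma(d,\theta,0)=O$ for every $\theta\in\mathcal I$, which by \Cref{def:fevenfodd} gives $f_\odd(d,\theta)=\sigma_{x_4}(d,\theta,0)=0$.

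The only subtle point is really the first part: one must ensure that no eigenvalue $\lambda_n^\odd(a)$ with $n\geq 1$ could cross zero before $\lambda_0^\odd$ does, but this is immediate from the ordering $\lambda_0^\odd(a)<\lambda_1^\odd(a)<\cdots$ of \Cref{thm:spectralthsingular} combined with the strict monotonicity in $a$. No further estimates are needed, so the argument is essentially a short assembly of the previously stated spectral results plus the rotational symmetry of $\Omega_d$.
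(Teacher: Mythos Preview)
Your proposal is correct and follows essentially the same approach as the paper's proof: both combine \Cref{prop:firsteigenvalues2} with the monotonicity in \Cref{prop:negdereigenv} to show no degeneracy occurs for $a<d$, and both invoke the rotational isometry of $\Omega_d$ in the $(x_1,x_4)$-plane (fixing $O$ and acting transitively on $\partial\Omega_d$) to conclude $f_\odd(d,\cdot)\equiv 0$. Your version simply spells out a few details more explicitly, such as the identification of the rotated geodesic with $\gamma_{d,\theta}$ via the uniqueness in \Cref{thm:existence_gammas}.
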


\begin{proof}
If $0<a<d$, then $\lambda_0^\odd(a)>0$ by Propositions~\ref{prop:negdereigenv} and \ref{prop:firsteigenvalues2}. Therefore, since $\lambda_0^\odd(d)=0$ by Proposition~\ref{prop:firsteigenvalues2}, it follows that $a_0^\odd=d$ is the first degeneracy instant for $f_\odd(a,0)=0$. As explained in the proof of Proposition~\ref{prop:firsteigenvalues2}, if $a=a_0^\odd=d$, then rotations in the $(x_1,x_4)$-plane induce \emph{isometries} of $\Omega_a$ that fix $O$ and act transitively on $\partial \Omega_a$; in particular, $f_\odd(d,s)=0$ for all $s\in\mathcal I$.
\end{proof}

The following result is the culmination of the spectral analysis above:

\begin{proposition}\label{prop:mainoscresult}
The spectra of $(\mathcal P_a)_\even$ and $(\mathcal P_a)_\odd$ 
are intertwined, that is, for all $a>0$, $b=c>0$, and $d>0$,
\begin{equation*}
\lambda_0^\even<\lambda_0^\odd<\lambda_1^\even<\lambda_1^\odd<\dots<\lambda_n^\even<\lambda_n^\odd<\lambda_{n+1}^\even<\lambda_{n+1}^\odd<\dots,
\end{equation*}
and each one of these eigenvalues is negative if $a>0$ is chosen sufficiently large. More precisely,
$\lambda_0^\even(a)<0$ for all $a>0$, and
\begin{enumerate}[\rm (i)]
\item if $n\in\N$, then $\lambda_n^\even(a_n^\even)=0$  and $\lambda_n^\even(a)<0$ for all $a>a_n^\even$;\smallskip
\item if $n\in\N_0$, then $\lambda_n^\odd(a_n^\odd)=0$  and $\lambda_n^\odd(a)<0$ for all $a>a_n^\odd$.
\end{enumerate}
In particular, the degeneracy instants 
$(a_n^\even)_{n\geq 1}$ and $(a_n^\odd)_{n\geq 0}$ are also intertwined:
\begin{equation*}
d=a_0^\odd < a_1^\even < a_1^\odd < a_2^\even < a_2^\odd< \dots < a_n^\even < a_n^\odd < a_{n+1}^\even < a_{n+1}^\odd <  \dots
\end{equation*}
\end{proposition}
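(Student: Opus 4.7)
The plan is to combine a variational bound for $\lambda_0^\even$, a reflection-symmetry argument yielding the interlacing of the two spectra, and the monotonicity of eigenvalues in $a$ from \Cref{prop:negdereigenv} together with the unboundedness of the degeneracy instants from \Cref{prop:degsequences} to pin down precisely when each eigenvalue becomes negative.

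First, I would prove $\lambda_0^\even(a)<0$ for all $a>0$ by testing the Rayleigh quotient against the constant function $v\equiv 1\in V_\even$: since $q_a<0$ everywhere by \eqref{eq:pa-qa}, the quadratic form \eqref{eq:defQa} satisfies $Q_a(1)=\int_0^1 q_a(z)\,\dd z<0$, hence $\lambda_0^\even(a)<0$ by \Cref{thm:spectralthsingular}.

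For the interlacing of the two spectra, I would exploit that $p_a$ and $q_a$ are even real analytic functions on $[-1,1]$, so $\mathcal L_a$ extends to a singular self-adjoint Sturm--Liouville operator on $L^2([-1,1],p_a\,\dd z)$ with limit-circle non-oscillating boundary conditions at both $z=\pm1$ that are symmetric under $z\mapsto -z$. Its eigenspaces split into even and odd parts under this reflection, which on $[0,1]$ are precisely the eigenspaces of $(\mathcal P_a)_\even$ and $(\mathcal P_a)_\odd$; hence the combined spectrum on $[-1,1]$ is $\{\lambda_n^\even\}_n\cup\{\lambda_n^\odd\}_n$. By the Sturm oscillation theorem for this doubled problem, its $k$-th eigenfunction has exactly $k$ zeros in $(-1,1)$: the even extension of an $n$-th even eigenfunction has $2n$ zeros in $(-1,1)$, while the odd extension of an $n$-th odd eigenfunction has $2n+1$ zeros (its $n$ zeros in $(0,1)$, their reflections, and the origin). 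This forces the ordering $\lambda_0^\even<\lambda_0^\odd<\lambda_1^\even<\lambda_1^\odd<\cdots$.

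Finally, to track which eigenvalues are negative, I would use strict monotonicity in $a$ (\Cref{prop:negdereigenv}), so each $\lambda_n^{\even/\odd}$ has at most one zero. Since $\lambda_0^\even<0$ everywhere, this zero does not exist for $n=0$ even. By the interlacing just proved, whenever any eigenvalue equals zero at some $a^*$, every eigenvalue of smaller rank in the merged spectrum is strictly negative at $a^*$ and hence has already crossed zero at some earlier $a$. Combined with the unboundedness of the degeneracy sets from \Cref{prop:degsequences}, this forces every $\lambda_n^\even$ with $n\ge1$ and every $\lambda_n^\odd$ with $n\ge0$ to have a unique zero in $a$, ordered according to the merged rank, yielding (i), (ii), and the full chain $a_0^\odd<a_1^\even<a_1^\odd<\cdots$; the identification $a_0^\odd=d$ is immediate from \Cref{prop:firsteigenvalues2}. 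The main obstacle I expect is justifying the Sturm oscillation theorem for the doubled problem on $[-1,1]$ with singular endpoints at \emph{both} ends, since \Cref{thm:spectralthsingular} is phrased for the one-singular-endpoint case. One resolves this either by invoking the general singular oscillation theory (e.g., via Zettl's treatise), or by avoiding doubling altogether via a Prüfer transformation on $[0,1]$ for $u_{a,\lambda}$: normalize $\theta_a(1;\lambda)=\pi/2$, show $\lambda\mapsto\theta_a(0;\lambda)$ is strictly monotone, and read off the even and odd eigenvalues as the values where $\theta_a(0;\lambda)$ equals $\tfrac\pi2+k\pi$ and $k\pi$ respectively, which interlace by construction.
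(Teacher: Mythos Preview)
Your proposal is correct, and in fact your \emph{alternative} route at the end---the Pr\"ufer transformation on $[0,1]$ normalized at $z=1$---is essentially what the paper does: it tracks the projective point $\Theta_a(\lambda)=[p_a(0)u'_{a,\lambda}(0):u_{a,\lambda}(0)]\in\R P^1$ and uses the Wronskian-type identity \eqref{eq:lastpartialintegr} (already established in the proof of \Cref{prop:negdereigenv}) to show that this line rotates monotonically in $\lambda$, so it alternates between the even condition $u'_{a,\lambda}(0)=0$ and the odd condition $u_{a,\lambda}(0)=0$; the starting order is fixed by \Cref{cor:even<odd}. Your \emph{primary} route, doubling to $[-1,1]$ and counting zeros of even/odd extensions, is a genuinely different and conceptually clean argument, but as you correctly flag, it requires the oscillation theorem with \emph{two} singular limit-circle endpoints, which the paper avoids entirely by staying on $[0,1]$ and exploiting the ready-made identity \eqref{eq:lastpartialintegr}. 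The remaining pieces---$\lambda_0^\even<0$ via the test function $v\equiv 1$, and the ordering of the degeneracy instants via \Cref{prop:negdereigenv,prop:degsequences,cor:firstdegen}---match the paper exactly.
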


\begin{center}
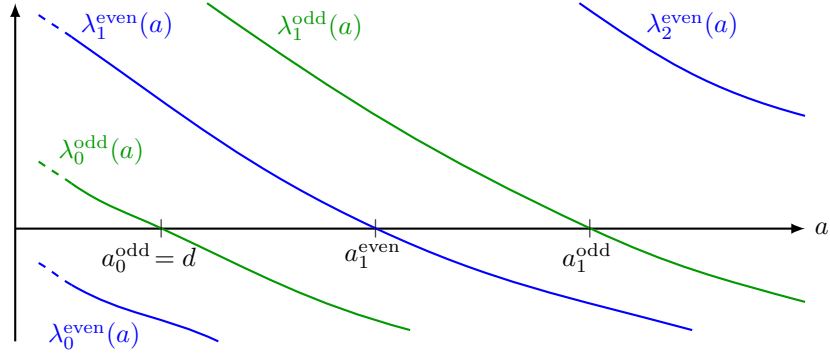
\begin{figure}[!ht]
\begin{tikzpicture}[scale=1.5]
    \draw [->,thick,-latex] (0,1) -- (7,1) node [right] {$a$};
    \draw [->,thick,-latex] (0,0) -- (0,3);
    \draw [-,thick,blue,dashed] (0.5,0.5) to[out=150,in=-30] (0.2,0.7);
    \draw [-,thick,blue] (0.5,0.5) to[out=-30,in=155] (1.8,0);
    \draw (0.7,0.05) node {\color{blue} $\lambda_0^\even(a)$};
    \draw (0.3,1.7) node [right] {\color{black!40!green} $\lambda_0^\odd(a)$};
    \draw [-,thick,black!40!green,dashed] (0.5,1.4) to[out=145,in=-35] (0.2,1.6);
    \draw [-,thick,black!40!green] (0.5,1.4) to[out=-35,in=155] (1.3,1);
    \draw [-,thick,black!40!green] (3.5,0.1) to[out=165,in=-25] (1.3,1);
    \draw (1.3,1) node {$_|$};
    \draw (1.3,0.95) node [below] {$a_0^{\odd}\!=d\quad$};
    \draw (0.5,2.8) node [right] {\color{blue} $\lambda_1^\even(a)$};
    \draw [-,thick,blue,dashed] (0.5,2.7) to[out=145,in=-35] (0.2,2.9);
    \draw [-,thick,blue] (0.5,2.7) to[out=-35,in=155] (3.2,1);
    \draw [-,thick,blue] (6,0.1) to[out=165,in=-25] (3.2,1);
    \draw (3.2,1) node {$_|$};
    \draw (3.2,0.95) node [below] {$a_1^{\even}\;$};
    \draw (2.7,2.8) node {\color{black!40!green} $\lambda_1^\odd(a)$};
    \draw [-,thick,black!40!green] (1.7,3) to[out=-35,in=155] (5.1,1);
    \draw [-,thick,black!40!green] (7,0.35) to[out=165,in=-25] (5.1,1);
    \draw [-,thick,blue] (5,3) to[out=-35,in=165] (7,2);
    \draw (5.1,1) node {$_|$};
    \draw (5.1,0.95) node [below] {$a_1^{\odd}\;$};
    \draw (6,2.8) node {\color{blue} $\lambda_2^\even(a)$};
\end{tikzpicture}
\caption{Schematic representation of eigenvalues of $(\mathcal P_a)_\even$ and $(\mathcal P_a)_\odd$ as $a>0$ varies, for fixed $b=c>0$ and $d>0$.}\label{fig:eigenvalues}
\end{figure}
\end{center}
\vspace{-0.5cm}

\begin{proof}
Fix $a>0$, and consider the following family of curves in the projective line:
\begin{equation}\label{eq:new-theta}
\phantom{(z,\lambda) [0,1)}
\Theta_{a}(\lambda)=\big[p_a(0)u_{a,\lambda}'(0) : u_{a,\lambda}(0)\big]\in\R P^1, \;\quad \text{for all }\lambda \in \R,
\end{equation}
where $u_{a,\lambda}(z)$ is defined in Proposition~\ref{prop:r-asolution}, and $[x_0 : x_1]$ are the usual homogeneous projective coordinates on $\R P^1$.
Consider the points
\begin{equation*}
\binfty=[0 : 1]\in \R P^1, \qquad \text{and} \qquad \mathbf 0=[1 : 0]\in \R P^1,
\end{equation*}
and note that $\lambda\in\R$ is equal to $\lambda_n^\even(a)$ or $\lambda_n^\odd(a)$ for some $n\in\N_0$ if and only if $\Theta_{a}(\lambda)$ is equal to $\binfty$ or $\mathbf 0$, respectively.
From \eqref{eq:lastpartialintegr}, we see that 
\begin{equation*}
\left(\tfrac{\mathrm d}{\mathrm d\lambda}\left(p_a(0)u_{a,\lambda}'(0)\right)\right)u_{a,\lambda}(0)-p_a(0)u_{a,\lambda}'(0)\left(\tfrac{\mathrm d}{\mathrm d\lambda}u_{a,\lambda}(0)\right)=1,
\end{equation*}
hence the curve \eqref{eq:new-theta} rotates clockwise, i.e., 
the line $\mathds R\cdot\big(p_a(0)u_{a,\lambda}'(0) , u_{a,\lambda}(0)\big)$ rotates clockwise in $\R^2$ as $\lambda$ increases. 
Therefore, \eqref{eq:new-theta} does not pass consecutively through one of 
$\binfty$ or $\mathbf 0$ without passing through the other, so the eigenvalues of $(\mathcal P_a)_\even$ and $(\mathcal P_a)_\odd$ are intertwined.
From Corollary~\ref{cor:even<odd}, the lowest among all eigenvalues of $(\mathcal P_a)_\even$ and $(\mathcal P_a)_\odd$ is $\lambda_0^\even(a)$, so the intertwining is as claimed.

From \eqref{eq:minmaxpaqa}, the quadratic form $Q_a$ is negative-definite on the subspace of $V_\even$ spanned by the constant function $v\equiv 1$. Thus, by Proposition~\ref{thm:spectralthsingular}, the eigenvalue $\lambda_0^\even(a)$ is negative for all $a>0$.
Meanwhile, $\lambda_0^\odd(a)$ crosses zero at $a=a_0^\odd=d$, by Corollary~\ref{cor:firstdegen}.
This implies claims (i) and (ii), as the sequences $a_n^\even$ and $a_n^\odd$  are unbounded by Proposition~\ref{prop:degsequences}, and all $\lambda_n^\even(a)$ and $\lambda_n^\odd(a)$ are decreasing by Proposition~\ref{prop:negdereigenv} and cannot cross each other. 
Note that the $n$-th degeneracy instants $a_n^\even$ and $a_n^\odd$ are intertwined as stated, since they correspond to values of $a$ at which the $n$-th eigenvalue $\lambda^\even_n(a)$ or $\lambda^\odd_n(a)$ crosses zero, and they do so with corresponding eigenfunctions $v_{a}\colon[0,1]\to\R$, that have exactly $n$ zeros in $(0,1)$.
(Enforcing these convenient self-indexing properties is the reason why the sequence $(a_n^\even)_{n\geq1}$ is indexed with $n\in\N$, while $(a_n^\odd)_{n\geq0}$ is indexed with $n\in\N_0$.)
\end{proof}

Henceforth, it will be convenient to consider the monotonic reindexing of the union of the sequences $(a_n^\odd)_{n\geq0}$ and $(a_n^\even)_{n\geq1}$, which by Proposition~\ref{prop:mainoscresult} is
\begin{equation}\label{eq:am}
    a_m:=\begin{cases}
        a^\even_{n}, & \text{ if } m=2n,\\
        a^\odd_{n}, & \text{ if } m=2n+1,
    \end{cases} \qquad m\in\N,
\end{equation}
i.e., $a_1=a_0^\odd=d$, $a_2=a_1^\even$, $a_3=a_1^\odd$, $a_4=a_2^\even$, and so on.

\subsection{Growth estimates}
We now provide a simple (and rather rough) estimate on the growth of the sequence $(a_m)_{m\geq1}$ defined in equation \eqref{eq:am}.

\begin{proposition}\label{prop:growth}
The sequence $(a_m)_{m\geq1}$ has the following asymptotic behavior:
\begin{equation}\label{eq:limsup_an}
\limsup\limits_{m\to\infty} \frac{a_m}{m} \leq 2d.
\end{equation}
\end{proposition}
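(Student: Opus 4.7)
The plan is to rephrase \eqref{eq:limsup_an} as a lower bound on the growth rate of the total negative eigenvalue count of the problems $(\mathcal P_a)_\even$ and $(\mathcal P_a)_\odd$. By \Cref{prop:mainoscresult} together with the combined indexing \eqref{eq:am}, we have $a_m\le a$ if and only if $N_\even(a)+N_\odd(a)\ge m+1$, where $N_\even$ and $N_\odd$ denote the negative eigenvalue counts. Thus it suffices to prove
\begin{equation*}
\liminf_{a\to+\infty}\frac{N_\even(a)+N_\odd(a)}{a}\ge\frac{1}{2d}.
\end{equation*}
Since the coefficients $p_a,q_a$ are even in $z$, extending eigenfunctions across $z=0$ by even or odd reflection identifies $N_\even(a)+N_\odd(a)$ with the Morse index of the quadratic form $\widetilde Q_a(u):=\int_{-1}^{1}p_a(u')^2+q_a u^2\,\dd z$ on $W^{1,2}([-1,1],\R)$ subject to the limit-point condition $\lim_{z\to\pm1}p_a(z)u'(z)=0$. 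By the min-max characterization (\Cref{thm:spectralthsingular}), this index is bounded below by the dimension of any subspace on which $\widetilde Q_a$ is negative-definite.

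Following the template of \Cref{prop:degsequences}, I would construct such a subspace using piecewise affine tent functions with disjoint supports, but with the quantitative choice of widths dictated by the leading-order asymptotics of $p_a$ and $q_a$ as $a\to+\infty$. Inspection of \eqref{eq:pa-qa} shows that, after the substitution $z=\sin\phi$ with $\phi\in(-\tfrac{\pi}{2},\tfrac{\pi}{2})$, and uniformly on compact subsets,
\begin{equation*}
\tilde p_a(\phi):=\frac{p_a(\sin\phi)}{\cos\phi}=\frac{1}{\sqrt{a^2+b^2\tan^2\phi}}=\frac{1}{a}\bigl(1+O(a^{-2})\bigr),
\qquad
\tilde q_a(\phi):=q_a(\sin\phi)\cos\phi=-\frac{a}{d^2}\bigl(1+O(a^{-2})\bigr),
\end{equation*}
so the rescaled operator has asymptotically constant coefficients in $\phi$. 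For a tent function $\eta_{\phi_0,\delta}(\phi):=\max\{0,1-|\phi-\phi_0|/\delta\}$ supported on $[\phi_0-\delta,\phi_0+\delta]\subset(-\tfrac{\pi}{2},\tfrac{\pi}{2})$, a direct computation gives
\begin{equation*}
\widetilde Q_a(\eta_{\phi_0,\delta})=\frac{2}{a\delta}(1+o(1))-\frac{2a\delta}{3d^2}(1+o(1)),
\end{equation*}
as $a\to+\infty$, which is strictly negative provided $\delta>(\sqrt3+\varepsilon)\,d/a$ for any fixed $\varepsilon>0$ and all $a$ sufficiently large.

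Finally, I would pack $K(a):=\lfloor\pi a/(2d(\sqrt3+\varepsilon))\rfloor-O(1)$ such tents disjointly inside a slightly shrunk subinterval of $(-\tfrac{\pi}{2},\tfrac{\pi}{2})$, obtaining a subspace of the desired dimension on which $\widetilde Q_a<0$. This yields $N_\even(a)+N_\odd(a)\ge K(a)$, and letting $\varepsilon\to 0$ gives $\liminf_{a\to+\infty}(N_\even+N_\odd)/a\ge\pi/(2d\sqrt3)$. Since $\pi/(2\sqrt3)>1/2$, this is strictly better than the required $1/(2d)$, so \eqref{eq:limsup_an} follows (in fact with the improved bound $2d\sqrt3/\pi$). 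The only mildly technical point is ensuring that the asymptotics of $p_a,q_a$ are uniform on the packing region; because the target bound $2d$ leaves a factor-of-$\sqrt3/\pi\cdot 2\approx 1.1$ of slack, one is free to restrict all tent supports to a fixed compact subinterval of $(-\tfrac{\pi}{2},\tfrac{\pi}{2})$, losing only a uniformly bounded number of test functions and thereby avoiding any delicate analysis near the singular endpoints $z=\pm1$.
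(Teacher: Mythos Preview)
Your argument is correct and follows the same test-function strategy as the paper, but with genuine refinements. The paper works directly on $[0,1]$ with test functions in $V_\even\cap V_\odd$, uses trapezoidal (flat-top) functions $\xi_{\alpha,\delta,2\delta}$ of total width $4\delta$, and bounds $Q_a$ via the crude uniform estimates \eqref{eq:minmaxpaqa}, namely $p_a\le 1/a$ and $\max q_a\le -(a^2+b^2)/(ad^2)$; packing $N$ such trapezoids into $[0,1]$ with $\delta=1/(4N)$ forces $a\gtrsim 4Nd$, which gives exactly $\limsup a_m/m\le 2d$. Your route instead (i) merges the even and odd problems into a single quadratic form on $[-1,1]$ via even/odd extension, (ii) changes variables $z=\sin\phi$ so that the rescaled coefficients $\tilde p_a,\tilde q_a$ become asymptotically \emph{constant} on compact subsets, and (iii) uses triangular tents, whose sharper ratio $\int(\eta')^2/\int\eta^2 = 3/\delta^2$ (versus $1/\delta^2$ for a trapezoid with plateau $2\delta$) yields the improved constant $2\sqrt{3}d/\pi\approx 1.10\,d$ in place of $2d$. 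The paper's version is shorter and needs nothing beyond \eqref{eq:minmaxpaqa}; yours extracts the actual leading-order behavior of $p_a,q_a$ and shows \eqref{eq:limsup_an} is not sharp.

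One terminological slip: the endpoints $z=\pm1$ are limit \emph{circle} (non-oscillatory), not limit point; but since your tents are compactly supported in $(-1,1)$ this is immaterial to the argument.
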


\begin{proof}
From \eqref{eq:am}, it suffices to show that 
\begin{equation}\label{eq:limevenodd}
\limsup\limits_{n\to\infty} \frac{a^\even_n}{n} \leq 4d \quad \text{and} \quad \limsup\limits_{n\to\infty} \frac{a^\odd_n}{n} \leq 4d.    
\end{equation}

Consider the piecewise affine test functions $\xi_{\alpha,\delta,\varepsilon}\colon [0,1]\to[0,1]$ defined in the proof of Proposition~\ref{prop:degsequences}, with $\varepsilon=2\delta$. Using \eqref{eq:minmaxpaqa}, if $a>b$ and $b\delta<d$, then:
\begin{align*}
Q_a(\xi_{\alpha,\delta,2\delta}) &< \int_{\alpha}^{\alpha+\delta} p_a\,(\xi_{\alpha,\delta,2\delta}')^2 \,\dd z  + \int_{\alpha+3\delta}^{\alpha+4\delta} p_a\,(\xi_{\alpha,\delta,2\delta}')^2 \,\dd z  
+\int_{\alpha+\delta}^{\alpha+3\delta} q_a\,(\xi_{\alpha,\delta,2\delta})^2 \,\dd z \\
&< \frac{2}{\delta}\max_{z\in[0,1]} p_a(z)+2\delta\max_{z\in[0,1]}q_a(z) 
= \frac{2}{a\delta} -\frac{2(a^2+b^2)\delta}{a d^2}\leq 0,
\end{align*}
provided $a\geq\sqrt{\frac{d^2}{\delta^2}-b^2}$. Given $N\in\N$, set $\delta=\frac{1}{4N}$ and $\alpha_j=\frac{j}{N}$, $0\leq j< N$, so that at most one among $\xi_{\alpha_j,\delta,2\delta}$ can be nonzero at any given $z\in[0,1]$, i.e., the functions $\xi_{\alpha_j,\delta,2\delta}$, $0\leq j<N$, have pairwise disjoint support. By the above and standard density arguments, the quadratic form $Q_a$ is negative-definite on an $N$-dimensional subspace of $V_\even\cap V_\odd$, provided that $a\geq \sqrt{16N^2d^2-b^2}$. Thus, \eqref{eq:limevenodd} now follows 
from Propositions~\ref{thm:spectralthsingular} and \ref{prop:mainoscresult}, as $\lim\limits_{N\to+\infty}\frac{\sqrt{16N^2d^2-b^2}}{N}=4d$.
\end{proof}

\section{\texorpdfstring{Arbitrarily many nonplanar minimal $2$-spheres}{Arbitrarily many nonplanar minimal spheres}}\label{sec:main}

In this section, we prove Theorem \ref{mainthmA} in the Introduction by analyzing bifurcation branches of solutions to $f(a,s)=0$, where $f=f_\even$ or $f_\odd$, recall \eqref{eq:btriv}, \eqref{eq:bf}, Definition~\ref{def:fevenfodd}, and Propositions~\ref{prop:even_odd}, \ref{prop:char-degeneracies}, \ref{prop:degsequences}, and \ref{prop:mainoscresult}.  

First, we show that all degeneracy instants are bifurcation instants:

\begin{proposition}\label{prop:deg=bif}
All degeneracy instants $a_n^\even$, $n\geq1$, and $a_n^\odd$, $n\geq0$, satisfy the hypotheses of Theorem~\ref{thm:CR}, and are hence bifurcation instants. Therefore:
\begin{equation}\label{eq:bfeven-bfodd}
\mathfrak b(f_\even)=\big\{a_n^\even : n\in\N \big\}, \quad \text{ and }\quad \mathfrak b(f_\odd)=\big\{a_n^\odd : n\in\N_0\big\}.
\end{equation}
\end{proposition}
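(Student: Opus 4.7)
The plan is to verify the two hypotheses of \Cref{thm:CR} at every degeneracy instant; hypothesis~(i) is automatic by the very definition of degeneracy instant together with \Cref{prop:char-degeneracies}, so the entire work reduces to checking the transversality condition~(ii), namely $\frac{\partial^2 f}{\partial a\partial s}(a_*,0)\neq 0$ with $f=f_\even$ or $f=f_\odd$.

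First I would express the first-order derivative $\frac{\partial f}{\partial s}(a,0)$ in terms of the canonical solution $u_{a,0}(z)$ from \Cref{prop:r-asolution}. The key observation is that the function $v_a$ introduced in \eqref{eq:v_a} is a real analytic solution of $\mathcal L_a(v)=0$ on $[0,1]$ satisfying the endpoint conditions \eqref{eq:SL-boundary}, which are compatible with the compatibility relation $a^2v_a(1)-d^2v_a'(1)=0$ of \Cref{prop:r-asolution}. Hence $v_a$ and $u_{a,0}$ are proportional, and since $u_{a,0}(1)>0$ we may write $v_a=c(a)\,u_{a,0}$ with $c(a):=d/u_{a,0}(1)$, a real analytic, strictly positive function of $a$. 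Combining this with \eqref{eq:SL-boundary-0} yields
\begin{equation*}
\frac{\partial f_\odd}{\partial s}(a,0)=c(a)\,u_{a,0}(0),\qquad \frac{\partial f_\even}{\partial s}(a,0)=c(a)\,u'_{a,0}(0).
\end{equation*}

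Next I would differentiate in $a$ at a degeneracy instant. If $a_*=a_n^\odd$, then $u_{a_*,0}(0)=0$ by \Cref{prop:char-degeneracies}, so the product rule gives
\begin{equation*}
\frac{\partial^2 f_\odd}{\partial a\partial s}(a_*,0)=c(a_*)\,\frac{\partial u_{a,0}}{\partial a}(0)\bigg|_{a=a_*},
\end{equation*}
and analogously $\frac{\partial^2 f_\even}{\partial a\partial s}(a_*,0)=c(a_*)\,\frac{\partial u'_{a,0}}{\partial a}(0)\big|_{a=a_*}$ when $a_*=a_n^\even$. The main (and essentially only) obstacle is to rule out vanishing of these cross-derivatives; but this is exactly the content of \Cref{lemma:deranonzero}, which asserts that $a_n^\odd$ is a simple zero of $a\mapsto u_{a,0}(0)$ and $a_n^\even$ is a simple zero of $a\mapsto u'_{a,0}(0)$. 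Since $c(a_*)>0$, the quantities above are nonzero, so \Cref{thm:CR} applies and every $a_n^\even$, $a_n^\odd$ is a bifurcation instant.

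Finally, to establish \eqref{eq:bfeven-bfodd}, I would recall that a bifurcation instant is necessarily a degeneracy instant (this is the Implicit Function Theorem observation made immediately above \eqref{eq:bf}), which gives the inclusion $\mathfrak b(f_\even)\subset\{a_n^\even\}_{n\geq1}$ and $\mathfrak b(f_\odd)\subset\{a_n^\odd\}_{n\geq0}$ via the characterization in \Cref{prop:char-degeneracies} and the enumeration in \Cref{prop:degsequences}. The reverse inclusion is exactly what was just proved, completing the identification.
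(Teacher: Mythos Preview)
Your proof is correct and follows essentially the same approach as the paper's own proof: both verify hypothesis~(ii) of \Cref{thm:CR} by identifying $\frac{\partial f}{\partial s}(a,0)$ with a nonzero multiple of $u_{a,0}(0)$ or $u'_{a,0}(0)$ via \eqref{eq:SL-boundary-0} and \Cref{prop:char-degeneracies}, and then invoke \Cref{lemma:deranonzero} to conclude. You are simply more explicit than the paper about the proportionality constant $c(a)=d/u_{a,0}(1)$ and about why the product rule leaves only one term at a degeneracy instant.
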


\begin{proof}
Hypothesis (i) of Theorem~\ref{thm:CR} follows from from Propositions~\ref{prop:char-degeneracies} and \ref{prop:degsequences}. From Proposition~\ref{prop:char-degeneracies} and \eqref{eq:SL-boundary-0}, we have that $\frac{\partial^2 f}{\partial a\,\partial s}(a_*,0)$ is a constant multiple of $\frac{\partial}{\partial a} u'_{a,0}(0)\big\vert_{a=a_n^\even}$ or $\frac{\partial}{\partial a}u_{a,0}(0)\big\vert_{a=a_n^\odd}$, according to $f=f_\even$ or $f_\odd$, respectively. The latter are nonzero by Lemma~\ref{lemma:deranonzero}, so hypothesis (ii) also holds. Together with the fact that every bifurcation instant is a degeneracy instant, this proves \eqref{eq:bfeven-bfodd}.
\end{proof}

\begin{definition}\label{def:bneven-bnodd}
We denote by $\mathfrak B_n^\even\subset f_\even^{-1}(0)$ and $\mathfrak B_n^\odd\subset f_\odd^{-1}(0)$ the bifurcation branches $\mathcal B_{a_*}$ issuing from the bifurcation point $(a_*,0)\in\mathcal B_\triv$, according to whether $a_*=a_n^\even$, or $a_*=a_n^\odd$, cf.~ Proposition~\ref{prop:deg=bif} and definitions in Section~\ref{sub:biftheory}. 
Collectively, we denote by $\mathfrak B_m$, $m\geq1$, the bifurcation branches corresponding to $a_m$, defined in \eqref{eq:am}, i.e., $\mathfrak B_1=\mathfrak B_0^\odd$, $\mathfrak B_2=\mathfrak B_1^\even$, $\mathfrak B_3=\mathfrak B_1^\odd$, $\mathfrak B_4=\mathfrak B_2^\even$ etc.
\end{definition}

\begin{remark}\label{rem:beven-bodd-reflections}
It follows from Remarks~\ref{rem:feven-even_fodd-odd} and \ref{rem:zeros-evenodd}, and Proposition~\ref{prop:deg=bif}, that each $\mathfrak B_m$ is invariant under the reflection $(a,s)\mapsto(a,-s)$ in $(0,+\infty)\times\mathcal I$.
\end{remark}

Note that, by Corollary~\ref{cor:firstdegen}, there is a trivial odd bifurcation branch given by
\begin{equation}\label{eq:b0odd}
\mathfrak B_1=\mathfrak B_0^\odd=\big\{(a,s): a=d, \, s\in \mathcal I\big\}.
\end{equation}
Aside from the above, all other bifurcation branches $\mathfrak B_m$ consist of points $(a,s)$ where $s\in\mathcal I$ is bounded away from the endpoints $\partial\mathcal I=\{\pm\frac{\pi}{2}\}$ if $a$ remains in compact subsets of $(0,+\infty)$; namely, we have the following result:

\begin{lemma}\label{lemma:away-from-bdy}
There exists a positive function $\varepsilon \colon (d,+\infty)\to  \R$ locally bounded away from $0$, such that if $a>d$ and $f(a,s)=0$, where $f=f_\even$ or $f_\odd$, then\linebreak $s\in\left[-\frac{\pi}{2}+\varepsilon(a),\frac{\pi}{2}-\varepsilon(a)\right]$. Thus, the restriction 
to $\big(f_\even^{-1}(0)\cup f_\odd^{-1}(0)\big)\cap (d,+\infty)\times \mathcal I$
of the projection $p\colon (d,+\infty)\times\mathcal I\to(d,+\infty)$ onto the first factor  is a proper map.
\end{lemma}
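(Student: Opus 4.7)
I argue by contradiction using minimal surface compactness (Choi--Schoen). Suppose the first assertion fails on some compact $K\subset(d,+\infty)$: there is a sequence $(a_n,s_n)\in f_\even^{-1}(0)\cup f_\odd^{-1}(0)$ with $a_n\to\bar a\in K$ and $s_n\to\pi/2$ (the case $s_n\to-\pi/2$ being analogous). By \Cref{prop:even_odd}, the curves $\gamma_{a_n,s_n}$ lift to embedded $\O(2)$-invariant minimal $2$-spheres $\Sigma_n\subset(S^3,\g_{a_n})$, whose areas are uniformly bounded since $\Ric_{\g_{a_n}}>0$ uniformly for $a_n\in K$. Choi--Schoen compactness, applied to the smoothly convergent metrics $\g_{a_n}\to\g_{\bar a}$, then yields a subsequence converging as integral varifolds in $(S^3,\g_{\bar a})$ to $m\cdot|\Sigma_1(\bar a)|$ for some integer $m\geq 1$---$\Sigma_1(\bar a)$ being the unique $\O(2)$-invariant minimal $2$-sphere projecting onto $\gamma_\ver=\lim\gamma_{a_n,s_n}$.

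If $m\geq 2$, standard results on varifold limits of embedded minimal hypersurfaces force $\Sigma_1(\bar a)$ to be stable, immediately contradicting $\Ric>0$ on $E(\bar a,b,b,d)$ via the second variation formula applied to the constant function $1$. So $m=1$, and the convergence is smooth away from finitely many points. In the \emph{even} case ($f=f_\even$), $\Sigma_n$ is $\tau_1$-invariant with poles at the lifts of $\beta(s_n)$ and $\beta(\pi-s_n)$ (by the $\tau_\ver$-symmetry from the proof of \Cref{prop:even_odd}), both converging to $(0,0,0,d)$; smooth $m=1$ convergence would require them to limit to two \emph{distinct} poles of $\Sigma_1(\bar a)$ located at $(0,0,0,\pm d)$---contradiction.

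The \emph{odd} case with $m=1$ is the main obstacle, since now the poles of $\Sigma_n$ (at the lifts of $\beta(s_n)$ and $\beta(s_n+\pi)$) do converge correctly to $(0,0,0,\pm d)$. I write $\Sigma_n$ as a normal graph of a small $u_n$ over $\Sigma_1(\bar a)$; a normalized subsequence $u_n/\|u_n\|_{C^2}$ converges to a nontrivial Jacobi field $\phi$ on $\Sigma_1(\bar a)$, which inherits $\O(2)$-invariance and $\tau_4$-oddness from the symmetries of $\Sigma_n$. Running the calculations of \Cref{sec:jacobi,sec:singularSL} on $\Sigma_1(a)$ in place of $\Sigma_4(a)$---amounting to interchanging the roles of the semiaxes $a$ and $d$ in \eqref{eq:pa-qa}---produces an analogous singular Sturm--Liouville problem whose first ``odd'' eigenvalue $\tilde\lambda_0^\odd(a)$ vanishes at $a=d$ (by the enhanced rotational symmetry of $E(d,b,b,d)$, as in \Cref{prop:firsteigenvalues2}) and is strictly increasing in $a$ (since the analog of $q_a$ depends on $a$ only through a factor $1/a^2$, as in the proof of \Cref{prop:negdereigenv}). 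Hence $\tilde\lambda_0^\odd(\bar a)>0$ for $\bar a>d$, ruling out $\phi$---contradiction.

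The properness assertion then follows at once: for any compact $K\subset(d,+\infty)$, the preimage $p^{-1}(K)\cap\big(f_\even^{-1}(0)\cup f_\odd^{-1}(0)\big)$ is a closed subset of the compact rectangle $K\times[-\pi/2+\varepsilon_K,\pi/2-\varepsilon_K]$, where $\varepsilon_K:=\inf_K\varepsilon>0$, and hence is compact.
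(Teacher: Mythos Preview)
Your proof is correct, but takes a considerably longer route than the paper's. The paper argues in two lines: by \Cref{prop:negdereigenv,prop:firsteigenvalues2}, $\lambda_0^\odd(a)>0$ for $0<a<d$, so $\gamma_\hor$ is nondegenerate and hence isolated among free boundary geodesics in that range; swapping the roles of $\gamma_\hor$ and $\gamma_\ver$ (i.e., of the parameters $a$ and $d$) immediately gives that $\gamma_\ver$ is isolated for $a>d$, which is the $\varepsilon$-bound. Choi--Schoen is invoked only at the end, for properness.

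You instead front-load Choi--Schoen to extract a varifold limit $m\cdot\Sigma_1(\bar a)$, rule out $m\geq 2$ via stability, and then---in the odd case---produce a nontrivial $\tau_4$-odd Jacobi field and kill it using exactly the swapped Sturm--Liouville fact $\tilde\lambda_0^\odd(\bar a)>0$ that the paper exploits from the outset via the implicit function theorem. Your even-case argument, via the collision of the two poles at $(0,0,0,d)$ under $m=1$ smooth convergence, is a genuinely different (and nice) topological observation not present in the paper; the paper's nondegeneracy argument handles both parities uniformly without distinguishing cases. What your approach buys is an explicit geometric picture of why the nondegeneracy of $\Sigma_1(\bar a)$ is the underlying mechanism; what the paper's approach buys is brevity and the avoidance of any discussion of multiplicity, stability of the limit, or the normalized-Jacobi-field extraction.
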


\begin{proof}
From Propositions~\ref{prop:negdereigenv} and \ref{prop:firsteigenvalues2}, we have $\lambda_0^\odd(a)>0$ for all $0<a<d$. Thus, by Proposition~\ref{prop:char-degeneracies} and Definition~\ref{def:fevenfodd}, if $0<a<d$, then the horizontal geodesic $\gamma_\hor$ in $\Omega_a$ is nondegenerate, hence isolated. 
Exchanging the roles of $\gamma_\hor$ and $\gamma_\ver$, hence of the parameters $a$ and $d$, it follows by the same arguments that $\gamma_\ver$ is isolated if $a>d$. In other words, the subset $\big(f_\even^{-1}(0)\cup f_\odd^{-1}(0)\big)\cap (d,+\infty)\times \mathcal I$ is bounded away from $(d,+\infty)\times \partial \mathcal I$, which proves the first assertion. The conclusion that $p\colon (d,+\infty)\times\mathcal I\to(d,+\infty)$ restricted to this subset is a proper map now follows from the compactness result of Choi--Schoen~\cite{choi-schoen}, also cf.~\cite[p.~163]{white2}.
\end{proof}

Thus, for fixed $a>d$, all zeros of the real analytic functions $\mathcal I\ni s\mapsto f(a,s)$, $f=f_\even$ or $f_\odd$, lie in a compact subinterval of $\mathcal I$, which proves the following:

\begin{corollary}\label{thm:finnumbs}
For any fixed $a>d$, we have that $\#\big(f^{-1}_\even(0)\cap\big(\{a\}\times \mathcal I\big)\big)<+\infty$ and $\#\big(f^{-1}_\odd(0)\cap\big(\{a\}\times \mathcal I\big)\big)<+\infty$.
\end{corollary}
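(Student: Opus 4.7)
The plan is to combine the real analyticity of $f_\even$ and $f_\odd$ with the compactness statement from Lemma \ref{lemma:away-from-bdy}. For fixed $a > d$, the slice maps $\mathcal I \ni s \mapsto f_\even(a,s)$ and $\mathcal I \ni s \mapsto f_\odd(a,s)$ are real analytic functions on the open interval $\mathcal I = (-\tfrac\pi2,\tfrac\pi2)$, as is guaranteed by \Cref{def:fevenfodd} together with the real analyticity of $(a,s,z)\mapsto \sigma(a,s,z)$ established in \Cref{thm:existence_gammas} and \Cref{prop:arrival_time}.

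Next I would invoke \Cref{lemma:away-from-bdy} to obtain $\varepsilon(a) > 0$ such that every zero of $f_\even(a,\cdot)$ or $f_\odd(a,\cdot)$ lies in the compact subinterval
\begin{equation*}
K_a := \left[-\tfrac\pi2 + \varepsilon(a),\,\tfrac\pi2 - \varepsilon(a)\right] \subsetneq \mathcal I.
\end{equation*}
This is the essential input: it localizes the zero set inside a compact piece of $\mathcal I$ that stays uniformly away from the boundary $\partial\mathcal I$.

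Finally, the conclusion follows from the identity theorem for real analytic functions on a connected set. If $f(a,\cdot) \not\equiv 0$ on $\mathcal I$ (where $f = f_\even$ or $f_\odd$), then its zero set is discrete in $\mathcal I$, hence finite on the compact subset $K_a$. So I only need to rule out that $f(a,\cdot)$ vanishes identically on $\mathcal I$ for $a > d$. But this is automatic: if $f(a,\cdot) \equiv 0$ on $\mathcal I$, then every $s \in \mathcal I$ would be a zero, contradicting the containment in $K_a$ (since $K_a$ is a proper compact subset of $\mathcal I$, as $\varepsilon(a) > 0$). There is no real obstacle here—the whole content lies in having set up \Cref{lemma:away-from-bdy} correctly; once that is available, the corollary is an immediate application of real analyticity together with compactness.
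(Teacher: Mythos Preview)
Your proof is correct and follows essentially the same approach as the paper: real analyticity of $s\mapsto f(a,s)$ together with \Cref{lemma:away-from-bdy} confines all zeros to a compact subinterval of $\mathcal I$, whence finiteness. Your explicit exclusion of the case $f(a,\cdot)\equiv 0$ via the proper containment $K_a\subsetneq\mathcal I$ is a nice touch that the paper leaves implicit.
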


We are now ready to prove the main result of this section, illustrated in Figure~\ref{fig:branches}. 

\begin{theorem}\label{thm:mainthm-bifbranches}
The bifurcation branches $\mathfrak B_m$, $m\geq1$, are 
noncompact and pairwise disjoint subsets that disconnect $(0,+\infty)\times \mathcal I$. If $m\geq2$, then $p(\mathfrak B_m)\supset [a_m,+\infty)$, where $p\colon (d,+\infty)\times\mathcal I\to (d,+\infty)$ is the projection on the first factor.
In particular, if $a>a_m$, then the segment $\{a\}\times\mathcal I$ intersects at least $m-1$ bifurcation branches.
\end{theorem}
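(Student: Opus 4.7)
The strategy is to combine the global bifurcation theorem of Rabinowitz (\Cref{thm:rabinowitz}) with the disjointness criterion (\Cref{prop:disjoint_noncompact}), employing as discrete-valued invariant the number $Z(a,s)\in\N_0$ of interior intersections of $\gamma_{a,s}$ with $\gamma_\hor$ in $\Omega_a$. For $(a,s)\in (f_\even^{-1}(0)\cup f_\odd^{-1}(0))\setminus\mathcal B_\triv$, any such intersection must be transverse, since a tangential one would contradict uniqueness of geodesics as in the proof of \Cref{prop:arrival_time}; hence $Z$ is locally constant on $f^{-1}(0)\setminus\mathcal B_\triv$ by the real analytic dependence in \Cref{thm:existence_gammas}. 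Using the parametrization of \Cref{rem:s-param}, near $(a_m,0)$ the branch $\mathfrak B_m$ is a graph $s\mapsto(a(s),s)$, and $\partial_s\sigma(a_m,s,\cdot)|_{s=0}$ determines the Jacobi field whose $x_4$-component is a nonzero multiple of $u_{a_m,0}$ by \Cref{prop:char-degeneracies}. By \Cref{thm:spectralthsingular,prop:mainoscresult} and the indexing \eqref{eq:am}, $u_{a_m,0}$ has exactly $n$ zeros in $(0,1)$, with $m\in\{2n,2n+1\}$; for small $s\ne0$ these perturb to transverse intersections of $\gamma_{a(s),s}$ with $\gamma_\hor$, giving $z^\pm(a_m)=n$. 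Thus $z^\pm$ is injective on $\mathfrak b(f_\even)$ and on $\mathfrak b(f_\odd)$ separately.

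Next, I would apply \Cref{prop:disjoint_noncompact} to $f_\even$ and to $f_\odd$ individually---properness being \Cref{lemma:away-from-bdy}, and all bifurcation instants lying in $[d,+\infty)$---to conclude that the closures $\overline{\mathfrak B_m^\pm}$ are noncompact and that same-parity branches are pairwise disjoint. Branches of different parity are also disjoint: a common point of $f_\even^{-1}(0)\cap f_\odd^{-1}(0)$ outside $\mathcal B_\triv$ would give a geodesic meeting $\gamma_\ver$ orthogonally at $O$, forcing it to equal $\gamma_\hor$ by uniqueness in \Cref{thm:existence_gammas}, and hence $s=0$. Consequently, all $\mathfrak B_m$, $m\geq 1$, are noncompact and pairwise disjoint.

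To obtain $p(\mathfrak B_m)\supset[a_m,+\infty)$ for $m\geq 2$, I would invoke \Cref{thm:rabinowitz}: hypothesis~(1) holds because \Cref{thm:finnumbs} forbids a branch constant in $a$ (infinitely many zeros of $f(a_m,\cdot)$ would be produced), and hypothesis~(2) is \Cref{lemma:away-from-bdy}. Case~(I) of the dichotomy is excluded by injectivity of $z^\pm$, so case~(II) holds at both ends of the Rabinowitz extension, symmetrically by \Cref{rem:zeros-evenodd2}. The crucial geometric step is showing $\mathfrak B_m$ avoids $\{d\}\times\mathcal I$: for odd $m$ any such point would lie in $\mathfrak B_1=\{d\}\times\mathcal I\subset f_\odd^{-1}(0)$, violating disjointness; for even $m$ the additional $(x_1,x_4)$-rotational symmetry of $\Omega_d$ forces $\gamma_{d,s}$ to be the $s$-rotation of $\gamma_\hor$, whose only intersection with $\gamma_\ver$ is at $O$ with tangent proportional to $(\cos s,0,\sin s)$, so $f_\even(d,s)=0$ only at $s=0\in\mathcal B_\triv$, again contradicting disjointness from $\mathcal B_\triv\setminus\{(a_m,0)\}$. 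Hence $\mathfrak B_m\subset(d,+\infty)\times\mathcal I$, and the only case~(II) alternative consistent with \Cref{lemma:away-from-bdy} is $a\to+\infty$; connectedness of $\mathfrak B_m$ and continuity of $p$ then yield $p(\mathfrak B_m)\supset[a_m,+\infty)$. Disconnection of $(0,+\infty)\times\mathcal I$ follows because each $\mathfrak B_m$ is an embedded proper curve with ends at the boundary (the vertical segment $\{d\}\times\mathcal I$ for $m=1$, or a curve with both ends at $a=+\infty$ for $m\geq 2$, the embeddedness coming from analyticity together with the absence of bifurcation points along the branch other than $(a_m,0)$). Finally, for $a>a_m$ the segment $\{a\}\times\mathcal I$ meets each of the $m-1$ pairwise disjoint branches $\mathfrak B_2,\dots,\mathfrak B_m$, yielding the claimed intersection count.

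The main obstacle is the exclusion of branches from $\{d\}\times\mathcal I$ in the even case, which requires exploiting the extra rotational symmetry of $\Omega_d$; without this, one cannot confine $\mathfrak B_m$ to the region $(d,+\infty)\times\mathcal I$ where \Cref{lemma:away-from-bdy} ensures properness and rules out the branch turning back toward $a\to 0$ instead of escaping to $a\to+\infty$. Once this confinement is established, the remainder of the argument is a straightforward application of the global bifurcation machinery of \Cref{sub:biftheory}.
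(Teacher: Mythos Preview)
Your proposal is correct and follows essentially the same route as the paper: the discrete-valued invariant $Z$ counting interior intersections of $\gamma_{a,s}$ with $\gamma_\hor$, the computation $z^\pm(a_m)=n$ via the zero count of the eigenfunction $u_{a_m,0}$ (using $a'(0)=0$, which the paper makes explicit and you implicitly use), \Cref{prop:disjoint_noncompact} applied separately to $f_\even$ and $f_\odd$, and the exclusion of case~\eqref{dich:reattach} in \Cref{thm:rabinowitz}.

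The one substantive difference is how you confine $\mathfrak B_m$, $m\geq2$, to $(d,+\infty)\times\mathcal I$. The paper argues indirectly: it observes that $\mathfrak B_1^\even$ is closed in $f_\even^{-1}(0)$ and, by \Cref{lemma:away-from-bdy}, bounded away from $\partial\mathcal I$, then infers $p(\mathfrak B_1^\even)\subset[d+\delta,+\infty)$ for some $\delta>0$ and propagates this bound to all higher branches. You instead compute directly that at $a=d$ the rotational symmetry forces $f_\even(d,s)\propto\sin s$, so the only even zero on $\{d\}\times\mathcal I$ is $(d,0)\in\mathcal B_\triv$, which is not a bifurcation point for $f_\even$; the odd case falls out of disjointness from $\mathfrak B_1$. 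Your version is more explicit and arguably cleaner, though both routes need the same compactness input (Choi--Schoen, via \Cref{lemma:away-from-bdy}) to prevent a sequence $(a_k,s_k)\in\mathfrak B_m$ with $a_k\searrow d$ and $s_k\to\pm\tfrac{\pi}{2}$. One small correction: the different-parity disjointness uses uniqueness of geodesics at the \emph{interior} point $O$ (standard ODE uniqueness, since the metric is nondegenerate there), not \Cref{thm:existence_gammas}, which concerns the singular boundary.
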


\begin{proof}
First, we claim that the function $Z\colon f^{-1}(0)\setminus\mathcal B_\triv\to\N_0$ given by
\begin{equation}\label{eq:z}
Z(a,s) := \#\big\{z\in (0,1) : \sigma_{x_4}(a,s,z)=0 \big\}
\end{equation}
is a discrete-valued invariant for the equation $f(a,s)=0$, where $f=f_\even$ or $f_\odd$, recall Definitions~\ref{def:invariant} and \ref{def:fevenfodd}, and \eqref{eq:sigmas}.
Indeed, note that the reparametrization of the geodesic $\gamma_{a,s}$ given by $z\mapsto \sigma(a,s,z)$,  $a>0$, $s\in\mathcal I\setminus\{0\}$, intersects $\gamma_\hor$ precisely at those $z$ where $\sigma_{x_4}(a,s,z)=0$, and every such intersection is transversal, i.e., $z\mapsto \sigma_{x_4}(a,s,z)$ only has simple zeros. 
If $(a,s)\in f_\even^{-1}(0)\setminus\mathcal B_\triv$, then
$\gamma_{a,s}$ is an even geodesic and $s\neq 0$, so $\sigma_{x_4}(a,s,0)\neq0$ and $\lim_{z\nearrow1}\sigma_{x_4}(a,s,z)\neq0$. Similarly, if $(a,s)\in f_\odd^{-1}(0)\setminus\mathcal B_\triv$, then
$\gamma_{a,s}$ is an odd geodesic and $s\neq 0$, so $\sigma_{x_4}(a,s,0)=0$ and $\lim_{z\nearrow1}\sigma_{x_4}(a,s,z)\neq0$. In both cases, it follows that \eqref{eq:z} is locally constant on $f^{-1}(0)\setminus\mathcal B_\triv$, and hence a well-defined discrete-valued invariant.

By Proposition~\ref{prop:deg=bif}, the hypotheses of Theorem~\ref{thm:CR} hold at each $a_*\in\mathfrak b(f)$, with $f=f_\even$ and $f_\odd$. Moreover, by Remarks~\ref{rem:zeros-evenodd} and \ref{rem:feven-even_fodd-odd}, 
the bifurcation branch $\mathcal B_{a_*}$ in a sufficiently small neighborhood of $(a_*,0)$ can be parametrized as a curve $(a(s),s)$, $s\in (-\varepsilon,\varepsilon)$, where $a(s)$ is a real analytic even function with $a(0)=a_*$. In particular, $a'(0)=0$. Recalling \eqref{eq:v_a}, this implies that, for all $z\in[0,1)$,
\begin{equation}
\textstyle
\frac{\partial}{\partial s}\sigma_{x_4}(a(s),s,z)\big|_{s=0}=\frac{\partial}{\partial a}\sigma_{x_4}(a_*,0,z)\,a'(0)+\frac{\partial}{\partial s}\sigma_{x_4}(a_*,s,z)\big|_{s=0}=v_{a_*}(z).
\end{equation}
Thus, the value assumed by the locally constant function $Z$ at points $(a,s)\in \mathcal B_{a_*}$ with $s\neq0$ sufficiently small is $Z(a,s)=z^\pm(a_*)=\#\{z\in (0,1) : v_{a_*}(z)=0\}$, cf.~\eqref{eq:zpm}. By Propositions~\ref{thm:spectralthsingular} and \ref{prop:char-degeneracies}, the latter is equal to $n$ if $a_*=a_n^\even$ or $a_n^\odd$, hence $z^\pm(a_n^\even)=z^\pm(a_n^\odd)=n$. Therefore, $z^\pm\colon \mathfrak b(f)\to\N_0$ is injective if $f=f_\even$ or $f_\odd$, see~\eqref{eq:bfeven-bfodd}.

We now verify the remaining hypotheses of Proposition~\ref{prop:disjoint_noncompact} (in particular, those of Theorem~\ref{thm:rabinowitz}) in both cases $f=f_\even$ and $f_\odd$, to conclude that the bifurcation branches $\mathfrak B_m$  are noncompact and pairwise disjoint.
Recall the trivial odd bifurcation branch $\mathfrak B_1=\mathfrak B_0^\odd$ given by \eqref{eq:b0odd}, and note that none of $\mathfrak B_n^\even$ nor of $\mathfrak B_n^\odd$, with $n\geq1$, can intersect it. Indeed, $z^\pm(a_0^\odd)=0$ and $z^\pm(a_*)\geq1$ for all $a_*\in\mathfrak b(f)$, with $f=f_\even$ or $f_\odd$. Thus, we shall disregard the first bifurcation instant $a_0^\odd$ and assume $a>d$.
By Corollary~\ref{thm:finnumbs}, the maps $s\mapsto a(s)$ with $a(0)=a_*$ whose graph $(a(s),s)$ parametrizes $\mathcal B_{a_*}$ near $(a_*,0)$ are not constant, so hypothesis (1) in Theorem~\ref{thm:rabinowitz} holds. Moreover, the restrictions of $p\colon (d,+\infty)\times\mathcal I\to(d,+\infty)$ to $f_\even^{-1}(0)$ and to $f_\odd^{-1}(0)$ are proper maps by Lemma~\ref{lemma:away-from-bdy}. 
So, Proposition~\ref{prop:disjoint_noncompact} applies, and we conclude that the bifurcation branches in the families $\mathfrak B_n^\even$ and $\mathfrak B_n^\odd$ are noncompact and pairwise disjoint.
Moreover, pairs of bifurcation branches from distinct families also do not intersect, because $(f_\even^{-1}(0)\setminus \mathcal B_\triv)\cap (f_\odd^{-1}(0)\setminus \mathcal B_\triv)=\emptyset$.

By Proposition~\ref{prop:mainoscresult}, the bifurcation branch that contains points $(a,s)$ with the smallest possible $a>d$ is $\mathfrak B_1^\even$. Since $\mathfrak B_1^\even\subset f_\even^{-1}(0)$ is closed and contains only points $(a,s)$ with $|s|\leq \frac{\pi}{2}-\varepsilon(a)<\frac{\pi}{2}$ by Lemma~\ref{lemma:away-from-bdy}, it follows that there exists $\delta>0$ such that $p(\mathfrak B_1^\even)\subset [d+\delta,+\infty)$. Thus, also $p(\mathfrak B_n^\even)\subset [d+\delta,+\infty)$ and $p(\mathfrak B_n^\odd)\subset [d+\delta,+\infty)$ for all $n\geq1$. 
Since $\mathfrak B_m$ are connected, so are $p(\mathfrak B_m)$, for all $m\geq1$. By definition, $a_m\in p(\mathfrak B_m)$. If some $p(\mathfrak B_m)$, $m\geq2$, was bounded, then it would have a supremum $\overline a<\infty$, and hence be contained in $[d+\delta,\overline a]$. Since the restriction of $p$ to $f^{-1}(0)$ is proper, this would imply that the compact set $p^{-1}([d+\delta,\overline a])\cap f^{-1}(0)$ contains a closed but noncompact subset. Thus, it follows that $p(\mathfrak B_m)\supset [a_m,+\infty)$ for all $m\geq2$.
It also follows from the above that each $\mathfrak B_m$ disconnects the strip $(d,+\infty)\times \mathcal I$, since it contains a curve of the form \eqref{eq:bif-curve-extended} satisfying \eqref{dich:bdy} in Theorem~\ref{thm:rabinowitz}, and $\lim_{t\to\pm\infty} (a(t),s(t))\notin (d,\infty)\times\partial \mathcal I$ by Lemma~\ref{lemma:away-from-bdy}.
Finally,  $\#\big(f^{-1}(0)\cap\big(\{a\}\times \big(0,\frac{\pi}{2}\big)\big)\big)\geq n$ if $a\geq a_n^\even$ or $a_n^\odd$ according to $f=f_\even$ or $f_\odd$. Taking into account $\mathfrak B_1=\mathfrak B_0^\odd$, given by \eqref{eq:b0odd}, and recalling \eqref{eq:am}, we conclude that $\{a\}\times\mathcal I$ intersects at least $m-1$ bifurcation branches if $a>a_m$.
\end{proof}

\begin{center}
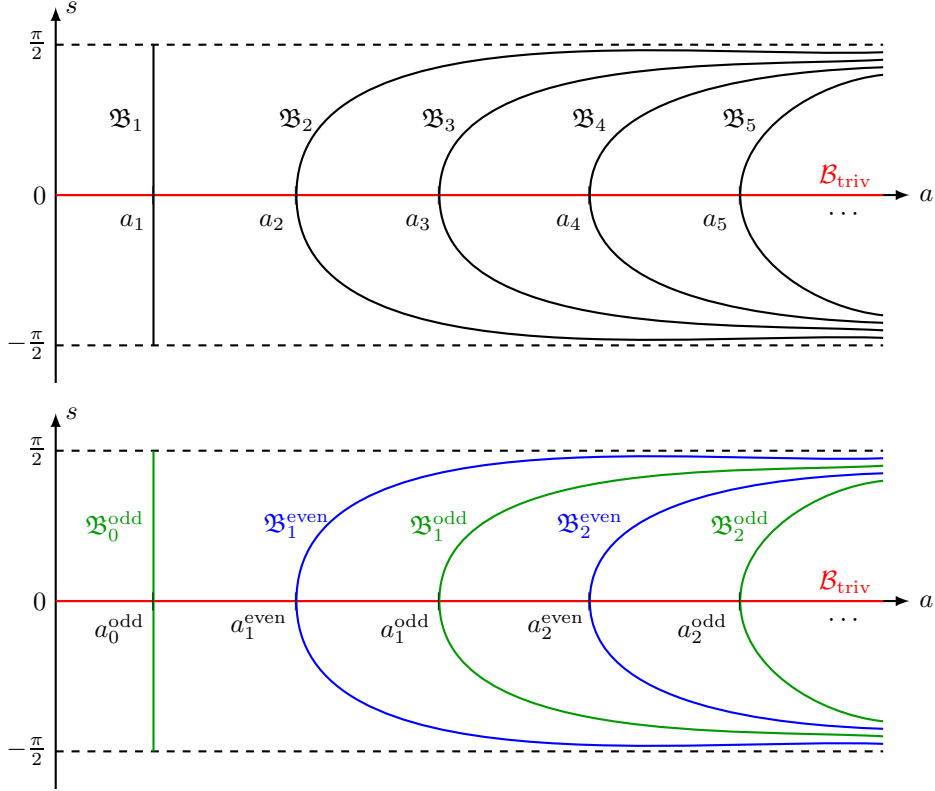
\begin{figure}[!ht]
\begin{tikzpicture}[scale=1]
    \draw [->,thick,-latex] (0,-2.5) -- (0,2.5) node [right] {$s$};
    \draw [->,thick,-latex] (0,0) -- (11.35,0) node [right] {$a$};
    
    \draw (0,2) node [left] {$\frac{\pi}{2}$};
    \draw (0,0) node [left] {$0$};
    \draw (0,-2) node [left] {$-\frac{\pi}{2}$};
    \draw [-,thick,dashed] (0,-2) -- (11,-2);
    \draw [-,thick,dashed] (0,2) -- (11,2);

    \draw (10.5,0.25) node {$\color{red}\mathcal B_\triv$};
    \draw [-,thick,red] (0,0) -- (11,0);

    \draw (1.3,1) node [left] {$\mathfrak B_1$};
    \draw [-,thick] (1.3,-2) -- (1.3,2);
    \draw (1.3,0) node {$_|$};
    \draw (1.3,-.05) node [below] {$a_1\phantom{=d}$};

    \draw (3.2,1) node {$\mathfrak B_2$};
    \draw [-,thick] (3.2,0) to[out=90,in=182,looseness=0.8] (11,1.9);
    \draw [-,thick] (3.2,0) to[out=-90,in=-182,looseness=0.8] (11,-1.9);
    \draw (3.2,0) node {$_|$};
    \draw (3.2,-.05) node [below] {$a_2\;\phantom{=d}$};
    
    \draw (5.1,1) node {$\mathfrak B_3$};
    \draw [-,thick] (5.1,0) to[out=90,in=183,looseness=0.8] (11,1.8);
    \draw [-,thick] (5.1,0) to[out=-90,in=-183,looseness=0.8] (11,-1.8);
    \draw (5.1,0) node {$_|$};
    \draw (5.1,-0.05) node [below] {$a_3\phantom{=d}$};

    \draw (7.1,1) node {$\mathfrak B_4$};
    \draw [-,thick] (7.1,0) to[out=90,in=182,looseness=0.85] (11,1.7);
    \draw [-,thick] (7.1,0) to[out=-90,in=-182,looseness=0.85] (11,-1.7);
    \draw (7.1,0) node {$_|$};
    \draw (7.1,-0.05) node [below] {$a_4\phantom{=d}$};

    \draw (9.1,1) node {$\mathfrak B_5$};
    \draw [-,thick] (9.1,0) to[out=90,in=186,looseness=0.9] (11,1.6);
    \draw [-,thick] (9.1,0) to[out=-90,in=-186,looseness=0.9] (11,-1.6);
    \draw (9.1,0) node {$_|$};
    \draw (9.1,-0.05) node [below] {$a_5\;\phantom{=d}$};
    \draw (10.5,-0.05) node [below] {$\cdots$};
\end{tikzpicture}

\medskip

\begin{tikzpicture}[scale=1]
    \draw [->,thick,-latex] (0,-2.5) -- (0,2.5) node [right] {$s$};
    \draw [->,thick,-latex] (0,0) -- (11.35,0) node [right] {$a$};
    
    \draw (0,2) node [left] {$\frac{\pi}{2}$};
    \draw (0,0) node [left] {$0$};
    \draw (0,-2) node [left] {$-\frac{\pi}{2}$};
    \draw [-,thick,dashed] (0,-2) -- (11,-2);
    \draw [-,thick,dashed] (0,2) -- (11,2);

    \draw (10.5,0.25) node {$\color{red}\mathcal B_\triv$};
    \draw [-,thick,red] (0,0) -- (11,0);

    \draw (1.3,1) node [left] {$\color{black!40!green}\mathfrak B_0^\odd$};
    \draw [-,thick,black!40!green] (1.3,-2) -- (1.3,2);
    \draw (1.3,0) node {$_|$};
    \draw (1.3,-.05) node [below] {$a_0^{\odd}\phantom{=d\quad}$};
        
    \draw (3.2,1) node {$\color{blue}\mathfrak B_1^\even$};
    \draw [-,thick,blue] (3.2,0) to[out=90,in=182,looseness=0.8] (11,1.9);
    \draw [-,thick,blue] (3.2,0) to[out=-90,in=-182,looseness=0.8] (11,-1.9);
    \draw (3.2,0) node {$_|$};
    \draw (3.2,-.05) node [below] {$a_1^{\even}\;\phantom{=d\quad}$};
    
    \draw (5.1,1) node {$\color{black!40!green}\mathfrak B_1^\odd$};
    \draw [-,thick,black!40!green] (5.1,0) to[out=90,in=183,looseness=0.8] (11,1.8);
    \draw [-,thick,black!40!green] (5.1,0) to[out=-90,in=-183,looseness=0.8] (11,-1.8);
    \draw (5.1,0) node {$_|$};
    \draw (5.1,-0.05) node [below] {$a_1^{\odd}\phantom{=d\quad}$};

    \draw (7.1,1) node {$\color{blue}\mathfrak B_2^\even$};
    \draw [-,thick,blue] (7.1,0) to[out=90,in=182,looseness=0.85] (11,1.7);
    \draw [-,thick,blue] (7.1,0) to[out=-90,in=-182,looseness=0.85] (11,-1.7);
    \draw (7.1,0) node {$_|$};
    \draw (7.1,-0.05) node [below] {$a_2^{\even}\phantom{=d\quad}$};

    \draw (9.1,1) node {$\color{black!40!green}\mathfrak B_2^\odd$};
    \draw [-,thick,black!40!green] (9.1,0) to[out=90,in=186,looseness=0.9] (11,1.6);
    \draw [-,thick,black!40!green] (9.1,0) to[out=-90,in=-186,looseness=0.9] (11,-1.6);
    \draw (9.1,0) node {$_|$};
    \draw (9.1,-0.05) node [below] {$a_2^{\odd}\;\phantom{=d\quad}$};
    \draw (10.5,-0.05) node [below] {$\cdots$};
    
\end{tikzpicture}
\caption{Schematic illustration of trivial branch $\mathcal B_\triv$ in red; and bifurcation branches $\mathfrak B_m$, $m\geq1$, in the upper picture. In the lower picture, these branches are individuated as $\mathfrak B_n^\even$ and $\mathfrak B_n^\odd$.}\label{fig:branches}
\end{figure}
\end{center}

\begin{remark}\label{rem:geom-interp}
From the proof of Theorem~\ref{thm:mainthm-bifbranches}, namely the computation of \eqref{eq:z}, it follows that if $(a,s)\in\mathfrak B_m$, then $\gamma_{a,s}$ intersects $\gamma_\hor$ transversely at $m$ points, and hence the embedded minimal $2$-sphere $\Pi^{-1}(\gamma_{a,s})$ in $E(a,b,b,d)$ intersects $\Sigma_4(a)$ transversely at $m$ circles parallel to the $(x_2,x_3)$-plane. In particular, cutting along $\Sigma_4(a)$ disconnects it into $m-1$ annuli, and $2$ disks. It is straightforward to verify that these minimal $2$-spheres are pairwise noncongruent for different values of $m$.

The above is in stark contrast with the \emph{two-piece property} of minimal surfaces in the round $3$-sphere, which are disconnected into \emph{two} connected components when cut along any planar $2$-sphere (i.e., an equator), see Ros~\cite{Ros95}.
\end{remark}

\begin{proof}[Proof of Theorem~\ref{mainthmA}]
From Definition~\ref{def:fevenfodd} and Proposition~\ref{prop:even_odd}, 
the number $N(a)$ of noncongruent nonplanar embedded minimal $2$-spheres in $E(a,b,b,d)$ satisfies:
\begin{equation}\label{eq:na-lowerbound}
N(a)\geq \#\left\{m \geq 2 : \mathfrak B_m \cap \left(\{a\}\times \left(0,\tfrac\pi2\right) \right)\neq \emptyset \right\},
\end{equation}
see also Remark~\ref{rem:geom-interp}. 
Consider the nondecreasing function
\begin{equation*}
N_{\mathfrak b}\colon (0,+\infty)\longrightarrow\N_0, \quad 
N_{\mathfrak b}(a):= 
\#\{m\geq 2 : a>a_m\}.
\end{equation*}
By Theorem~\ref{thm:mainthm-bifbranches}, 
the right-hand side of \eqref{eq:na-lowerbound} is $\geq N_\mathfrak b(a)$, hence $N(a)\geq N_\mathfrak b(a)$. (Recall that $a_1=a_0^\odd=d\in\mathfrak b(f_\odd)$ gives rise to the branch \eqref{eq:b0odd}, so this first bifurcation instant does not contribute to $N(a)$, nor to $N_\mathfrak b(a)$.)
Thus, we have:
\begin{equation}\label{eq:proofliminf1}
\liminf_{a\to+\infty} \frac{N(a)}{a} \geq  \liminf_{a\to+\infty}  \frac{N_\mathfrak b(a)}{a}  = \lim_{n\to\infty} \frac{N_\mathfrak b(\alpha_n)}{\alpha_n}
\end{equation}
for some diverging sequence $(\alpha_n)_{n\in\N}$.
From the proof of Proposition~\ref{prop:growth}, it follows that if $n\in\N$ is sufficiently large, then $a \geq \sqrt{16n^2d^2-b^2}$ implies $N_\mathfrak b(a)\geq 2n-2$; because, prior to such value of $a$, there are at least $n-1$ even bifurcation instants and $n-1$ odd bifurcation instants other than $a_1$. Choosing an increasing function $k\colon \N\to\N$ with $\sqrt{16\,k(n)^2d^2-b^2}\leq \alpha_n < \sqrt{16(k(n)+1)^2d^2-b^2}$ for all $n\in\N$, 
\begin{equation}\label{eq:proofliminf2}
\begin{aligned}
\lim_{n\to+\infty} \frac{N_\mathfrak b(\alpha_n)}{\alpha_n} &\geq \limsup_{n \to+\infty} \frac{N_\mathfrak b\big(\!\sqrt{16\,k(n)^2d^2-b^2}\big)}{\sqrt{16(k(n)+1)^2d^2-b^2}}\\
&\geq \limsup_{n\to+\infty} \frac{2k(n)-2}{\sqrt{16(k(n)+1)^2d^2-b^2}}=\frac{1}{2d}.
\end{aligned}
\end{equation}
The desired estimate \eqref{eq:liminfNa} on the growth of $N(a)$ follows from \eqref{eq:proofliminf1} and \eqref{eq:proofliminf2}.
\end{proof}

\begin{remark}\label{rem:bifa0}
A bifurcation analysis similar to the above for $\Sigma_4(a)=\Pi^{-1}(\gamma_\hor)$ as $a\nearrow+\infty$ can be carried out for $\Sigma_1(a)=\Pi^{-1}(\gamma_\ver)$ as $a\searrow 0$, and yields further nonplanar embedded minimal $2$-spheres in $E(a,b,b,d)$. In particular, the zero sets of $f_\even$ and $f_\odd$ contain points \emph{outside} the bifurcation branches $\mathfrak B_m$ issuing from $\mathcal B_\triv$, illustrated in Figure~\ref{fig:branches}; namely, points in bifurcation branches issuing from \emph{another} trivial branch $\big\{\big(a,\pm\frac\pi2\big):a>0\big\}$, corresponding to $\gamma_\ver$. 
The key step in this analysis is to show that the equivariant Morse index of $\Sigma_1(a)$ diverges as $a\searrow0$, which follows reasoning as in Proposition~\ref{prop:degsequences}, 
since the Jacobi equation for radial functions on $\Sigma_1^+(a)$ is also of the form \eqref{eq:SL}, where $a$ and $d$ exchanged in \eqref{eq:pa-qa}.
\end{remark}

\section{\texorpdfstring{Asymptotic behavior and convergence as $a\nearrow+\infty$}{Asymptotic behavior as a goes to infinity}}\label{sec:asymptotic}

In this section, we prove Theorem~\ref{mainthmB} in the Introduction. 
To that end, we first analyze the limit of the Riemannian $2$-disk $\Omega_a$, and of its geodesics, as $a\nearrow+\infty$.

\begin{proposition}\label{thm:limitstrip}
The family of open Riemannian $2$-disks $\Omega_a=\big(S^3_{\pr}/\G,V^2\check \g_a\big)$ converges smoothly as $a\nearrow+\infty$ to the open Riemannian strip
\begin{equation}\label{eq:omega_infty}
\Omega_{\infty}:=\big( \R\times (-L,L),\, \eta(y)^2\big(\dd x^2+\dd y^2\big) \big),
\end{equation}
where $\eta(y)=2\pi b\cos\!\big(v(y)\big)$, and $v(y)$ is the inverse function of
\begin{equation}\label{eq:y}
y\colon \left[-\tfrac\pi2,\tfrac\pi2\right]\longrightarrow [-L,L], \quad y(v)=\int_0^v \sqrt{d^2\cos^2\xi +b^2\sin^2\xi}\;\dd \xi.
\end{equation}
\end{proposition}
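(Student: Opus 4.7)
The plan is to give an explicit parametrization of $S^3_\pr/\G$ in which the conformal metric $V^2\check\g_a$ has coefficients depending polynomially on $1/a^2$, so the $a\nearrow+\infty$ limit may be read off directly and the convergence is automatic in $C^\infty_{\mathrm{loc}}$.

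First, I would introduce the chart $(x,v)\in(-a,a)\times\left(-\tfrac{\pi}{2},\tfrac{\pi}{2}\right)\to S^3_\pr/\G$ given by
\[
(x_1,r,x_4)=\Bigl(x,\ b\sqrt{1-x^2/a^2}\,\cos v,\ d\sqrt{1-x^2/a^2}\,\sin v\Bigr),
\]
which is a diffeomorphism onto $S^3_\pr/\G$ (the excluded loci $x=\pm a$ and $v=\pm\tfrac\pi2$ map to $r=0=\partial(S^3/\G)$). A direct computation of the pullback of the Euclidean metric of $\R^3$ then yields
\[
\check\g_a=A_a(x,v)\,\dd x^2+2B_a(x,v)\,\dd x\,\dd v+C_a(x,v)\,\dd v^2,
\]
with
\[
A_a=1+\tfrac{x^2(b^2\cos^2 v+d^2\sin^2 v)}{a^2(a^2-x^2)},\qquad B_a=\tfrac{(b^2-d^2)\,x\sin v\cos v}{a^2},\qquad C_a=(1-x^2/a^2)(b^2\sin^2 v+d^2\cos^2 v).
\]
Since $V=2\pi r=2\pi b\sqrt{1-x^2/a^2}\cos v$, every coefficient of $V^2\check\g_a$ is a real analytic function of $(x,v,1/a^2)$ on any fixed compact subset $K\subset\R\times\left(-\tfrac\pi2,\tfrac\pi2\right)$, as soon as $a$ is large enough that $K\subset(-a,a)\times\left(-\tfrac\pi2,\tfrac\pi2\right)$.

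Letting $a\to+\infty$, all the $1/a^2$-terms vanish and we obtain $C^\infty_{\mathrm{loc}}$-convergence
\[
V^2\check\g_a\ \longrightarrow\ 4\pi^2 b^2\cos^2 v\,\bigl(\dd x^2+(b^2\sin^2 v+d^2\cos^2 v)\,\dd v^2\bigr).
\]
The final step is the change of coordinate $v\mapsto y=y(v)$ of \eqref{eq:y}, which is a diffeomorphism $\left(-\tfrac\pi2,\tfrac\pi2\right)\to(-L,L)$ with positive derivative, since its integrand is positive. Under $v=v(y)$ one has $(b^2\sin^2 v+d^2\cos^2 v)\,\dd v^2=\dd y^2$ and $2\pi b\cos v(y)=\eta(y)$, so the limit metric takes the claimed form $\eta(y)^2(\dd x^2+\dd y^2)$ on $\R\times(-L,L)$.

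The main (and fairly minor) obstacle is identifying a parametrization in which the convergence is smooth rather than merely uniform; the choice above achieves this because the coefficients become polynomial in $1/a^2$, so uniform convergence of all derivatives on compact sets is automatic. No further analytic subtleties arise beyond this bookkeeping.
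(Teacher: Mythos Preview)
Your proof is correct and follows essentially the same strategy as the paper: parametrize the ellipsoidal hemisphere explicitly, compute the induced metric, take the $a\to\infty$ limit, and then pass from $v$ to $y$ via \eqref{eq:y}. The paper uses a polar-type chart $X(u,v)=(a\cos u\sin v,\,b\cos v,\,d\sin u\sin v)$ centered at $O$ and then identifies $x=x_1$ in the limit, whereas you work directly in $(x_1,v)$-coordinates; in the limit both $v$'s coincide (since your $\cos v=r/(b\sqrt{1-x^2/a^2})\to r/b$) and the resulting metric is the same. Your choice has the advantage that the coefficients of $V^2\check\g_a$ are manifestly polynomial in $1/a^2$, making the $C^\infty_{\mathrm{loc}}$-convergence explicit, while the paper simply asserts that the limit ``is easy to see.''
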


\begin{proof}
The orbit space $(S^3/\G,\check\g_a)$ is isometric to the ellipsoidal hemisphere \eqref{eq:quotient}, and can hence be parametrized with $X(u,v)=\big(x_1(u,v),r(u,v),x_4(u,v)\big)$, where
\begin{equation}\label{eq:param_omega_a}
x_1(u,v)=a\cos u\sin v, \quad r(u,v)=b\cos v, \quad x_4(u,v)=d\sin u\sin v,
\end{equation}
and $u\in [0,\pi]$, $v\in[-\tfrac\pi2,\tfrac\pi2]$.
This parametrization leads to an intrinsic expression for $\check\g_a$, which can be computed from the Euclidean metric $\dd x_1^2+\dd r^2+\dd x_4^2$ and \eqref{eq:param_omega_a}. It is easy to see that the limit of $\check\g_a$ as $a\nearrow+\infty$ is the flat metric on the strip $\R\times [-L,L]$, namely
\begin{equation*}
\dd x^2+\dd y^2=\dd x_1^2+\big(d^2\cos^2 v+b^2\sin^2 v\big)\dd v^2,
\end{equation*}
where $x=x_1$, and 
$y=y(v)$ is given by \eqref{eq:y}, with $L=y(\tfrac\pi2)$. 
The conclusion now follows, recalling that the volume function \eqref{eq:vol-orbits} is given by $V=2\pi b\cos v$.
\end{proof}

Note that the boundary $\partial\Omega_a$, which corresponds to $r=0$, is the ellipse with semiaxes $a$ and $d$ given by the union of the arcs $\partial_\pm \Omega_a=\left\{X\!\left(u,\pm\tfrac\pi2\right) : 0\leq u\leq \pi\right\}$ that meet at the antipodal points $(\pm a,0,0)$ on the $x_1$-axis. In the limit as $a\nearrow+\infty$, these become the disjoint components $\partial_\pm\Omega_\infty=\R\times \{\pm L\}$ of the boundary $\partial \Omega_\infty$.

\begin{remark}
For all values of $a\in (0,+\infty]$, the length of the geodesic $\gamma_\ver$ in~$\Omega_a$, parametrized as $X\!\left(\tfrac\pi2,v\right)$ above, is equal to $2L$ with respect to the metric $\check\g_a$, but equal to $|\Sigma_1(a)|$ with respect to the metric $V^2\,\check \g_a$, since $\Pi^{-1}(\gamma_\ver)=\Sigma_1(a)$.
\end{remark}

A simple analysis similar to \cite[Sec.~2, 3]{hnr} for geodesics of \eqref{eq:omega_infty} leads to:

\begin{lemma}\label{lem:geods_omega_infty}
A maximal geodesic $\gamma$ in $\Omega_\infty$, see \eqref{eq:omega_infty}, is either a vertical segment $\{x_0\}\times (-L,L)$, for some $x_0\in\R$, or the horizontal geodesic $\mathds R\times\{0\}$, or else the graph of a surjective periodic function $y\colon\mathds R\to[-w_\gamma,w_\gamma]$, where $w_\gamma\in (0,L)$.
\end{lemma}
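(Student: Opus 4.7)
\medskip

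\noindent\textbf{Proof proposal.} The plan is to exploit the fact that the metric $\eta(y)^2(\dd x^2+\dd y^2)$ on $\Omega_\infty$ depends only on $y$, so that $\partial_x$ is a Killing field. For any geodesic $\gamma(t)=(x(t),y(t))$, Noether's theorem yields two conservation laws:
\begin{equation*}
C := \eta(y)^2\,\dot x, \qquad E := \eta(y)^2\big(\dot x^2+\dot y^2\big).
\end{equation*}
These two first integrals reduce the geodesic equation to a quadrature, from which the three cases in the lemma can be read off directly. I would also record, at the outset, that $\eta(y)=2\pi b\cos(v(y))$ is strictly positive on $(-L,L)$, attains a unique maximum $\eta(0)=2\pi b$, is strictly decreasing in $|y|$, and tends to $0$ as $|y|\nearrow L$.

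First, if $C=0$, then $\dot x\equiv 0$ since $\eta>0$, so $x(t)\equiv x_0$. The geodesic equation then restricts to the one-dimensional interval $(\{x_0\}\times(-L,L),\eta(y)^2\,\dd y^2)$, whose maximal geodesic is the whole open segment $\{x_0\}\times(-L,L)$. If $C\neq 0$, then $\dot x$ never vanishes, so $x$ is monotonic and $t\mapsto x(t)$ is a diffeomorphism onto its image; writing $y$ as a function of $x$ along $\gamma$, the conservation laws combine to give
\begin{equation*}
\left(\frac{\dd y}{\dd x}\right)^{\!2}=\frac{\dot y^2}{\dot x^2}=\frac{\eta(y)^2}{C_0^2}-1,\quad\text{where } C_0:=\tfrac{|C|}{\sqrt E}\in (0,2\pi b].
\end{equation*}
If $C_0=2\pi b$, the right-hand side is nonpositive, so $y\equiv 0$ and $\gamma$ parametrizes the horizontal geodesic $\R\times\{0\}$. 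If $0<C_0<2\pi b$, let $w_\gamma\in(0,L)$ be the unique value with $\eta(w_\gamma)=C_0$; then $y(x)$ is forced to stay in $[-w_\gamma,w_\gamma]$, with turning points exactly where $\eta(y)=C_0$, and the autonomous ODE above is separable. Integrating between consecutive turning points shows that $y(x)$ oscillates between $\pm w_\gamma$ and is periodic, with surjective image $[-w_\gamma,w_\gamma]$. Maximality in the parameter $t$, together with $|\dot x|=|C|/\eta(y)^2$ being bounded above and below by positive constants on the compact range of $y$, ensures that $x$ varies over all of $\R$, so $\gamma$ is the graph of a function $\R\to[-w_\gamma,w_\gamma]$.

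The main obstacle is rigorously handling the turning points in the subcase $0<C_0<2\pi b$: there $\dd y/\dd x=0$, and one must check that the solution of the separated equation indeed extends smoothly across $y=\pm w_\gamma$ and reflects, giving a periodic solution rather than one that dies at the turning point. The cleanest way to handle this is to go back to the (autonomous) geodesic system in $(x,y)$ coordinates and invoke standard ODE theory: the solution of the full geodesic flow exists for all $t$ in its maximal interval, and turning points of $y$ correspond to regular points where $\dot x\neq 0$, so $\gamma$ continues past them. The claim that each maximal geodesic must fall into exactly one of the three types then follows from the dichotomy $C=0$ versus $C\neq 0$, together with the exhaustive case analysis of $C_0\in(0,2\pi b]$ above.
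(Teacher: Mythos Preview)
Your proposal is correct and follows essentially the same route as the paper: both exploit the Killing field $\partial_x$ to obtain the conserved quantity $C=\eta(y)^2\dot x$, perform the identical trichotomy on $|C|\in\{0\}\cup(0,2\pi b)\cup\{2\pi b\}$, and in the oscillatory case reduce to the separable equation $(dy/dx)^2=\eta(y)^2/C_0^2-1$.

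The only substantive difference is in how the turning-point issue is resolved. The paper argues by contradiction: if $y'$ never vanished past some time, one could parametrize by $y$ and compute $x(y)-x(y_*)=\int_{y_*}^{y}C_\gamma/\sqrt{\eta(s)^2-C_\gamma^2}\,ds$, which remains bounded as $y\to w_\gamma$ because $\eta^2-C_\gamma^2$ has a \emph{simple} zero there (this is where $\eta'(w_\gamma)\neq 0$ enters), contradicting $x\to\infty$. Your alternative, appealing to regularity of the full geodesic flow, also works, but note that it relies on the same fact in disguise: to rule out $y(t)$ asymptoting to $w_\gamma$ without reaching it, you need that $(y,\dot y)=(w_\gamma,0)$ is not an equilibrium of the reduced $y$-dynamics, i.e., that $\ddot y\neq 0$ there, which again comes down to $\eta'(w_\gamma)\neq 0$. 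You record this monotonicity of $\eta$ at the outset, so the argument goes through, but it would strengthen the write-up to make this dependence explicit.
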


\begin{proof}
By Proposition~\ref{thm:limitstrip}, the conformally flat metric $\eta(y)^2(\mathrm dx^2+\mathrm dy^2)$ is such that $\eta\colon (-L,L)\to\R$ is an even positive function, strictly decreasing on $\left[0,L\right]$, and with simple zeros at $y=\pm L$. Under these conditions, given any unit speed geodesic $\gamma(t)=(x(t),y(t))$ in $\Omega_\infty$, 
since $\frac{\partial}{\partial x}$ is a Killing field, we have that $\eta(y(t))^2 x'(t)\equiv C_\gamma$, for some $C_\gamma\in\R$. 
Since $x'(t)=C_\gamma\eta(y(t))^{-2}$ and $\eta(y(t))^2x'(t)^2\le1$, we have $\vert x'(t)\vert\le\eta(y(t))^{-1}$, and therefore 
\begin{equation}\label{eq:boundsCgamma}
\vert C_\gamma\vert\le \eta(y(t))\le \eta(0)=2\pi b.
\end{equation} 
If $C_\gamma=0$, then $\gamma$ is a vertical segment. If $|C_\gamma|=2\pi b$, then $y(t)\equiv0$ and $\gamma$ is the $x$-axis $\R\times\{0\}$. So let us assume $0<C_\gamma<2\pi b$, the case $-2\pi b<C_\gamma<0$ being completely analogous.
By \eqref{eq:boundsCgamma}, $\gamma$ remains in the strip $\mathds R\times[-w_\gamma,w_\gamma]$, where $w_\gamma\in\left(0,L\right)$ and $\eta(w_\gamma)=C_\gamma$. In particular, $\gamma$ must be defined on the entire real line. Moreover, since $x'\ge C_\gamma/4\pi^2b^2>0$, we obtain that $x$ is a strictly increasing function, with $\lim\limits_{t\to\pm\infty}x(t)=\pm\infty$. 
This implies that $\gamma$ is the graph of a smooth function $y\colon\mathds R\to[-w_\gamma,w_\gamma]$, which we now show is periodic.

It is easy to prove that $y'(t)$ vanishes if $t=t_n$, where $(t_n)_n$ is an increasing unbounded sequence. Namely, if this were not the case, then $y'$ would not vanish on a half-line $\left[t_*,+\infty\right)$, and $\gamma\vert_{\left[t_*,+_\infty\right)} $ could then be parametrized by $y$ (assuming $y'>0$, the case $y'$ being totally analogous). But then, setting $y_*=y(t_*)$, we have
\begin{equation}\label{eq:improper}
x(y)-x(y_*)=\int_{y_*}^y\frac{C_\gamma}{\sqrt{\eta(s)^2-C_\gamma^2}}\,\mathrm ds\le\int_{y_*}^{w_\gamma}\frac{C_\gamma}{\sqrt{\eta(s)^2-C_\gamma^2}}\,\mathrm ds, 
\end{equation}
which gives a contradiction, because $x(y)$ is unbounded as $y$ grows, while the improper integral on the right-hand side of \eqref{eq:improper} is convergent, since the function $\eta^2-C_\gamma^2$ has a simple zero at $s=w_\gamma$. 
Note that the zeros of $y'$ correspond to points along $\gamma$ for which $y=\pm w_\gamma$, and hence $y'$ changes sign at every zero. The existence of zeros for $y'$ also proves that the function $y\colon\mathds R\to[-w_\gamma,w_\gamma]$, whose graph is $\gamma$, is surjective.
Now, choose an instant $t_1$ with $y'(t_1)=0$, define $t_2$ to be the minimal $t>t_1$ with $y(t_2)=y(t_1)$ (in particular, $y'(t_2)=0$), and set  $\Delta_\gamma=x(t_2)-x(t_1)$. Since translations in the $x$-directions are isometries, the curve $\widetilde\gamma(t)=\big(x(t)+\Delta_\gamma,y(t)\big)$ is a geodesic in $\Omega_\infty$, and satisfies $\widetilde\gamma(t_1)=\gamma(t_2)$ and $\widetilde\gamma'(t_1)=\gamma'(t_2)$. Therefore, $\widetilde\gamma(t)=\gamma(t+t_2-t_1)$ for all $t\in\mathds R$, so $y=y(x)$ is periodic, with period $\Delta_\gamma$, concluding the proof.
\end{proof}

The above leads us to define the \emph{period function} $\Delta\colon\left(-2\pi b,2\pi b\right)\to \R$, given by 
\begin{equation}\label{eq:form_period}
\Delta(c):=2\int_{-w_c}^{w_c}\frac{\vert c\vert}{\sqrt{\eta(s)^2-c^2}}\,\mathrm ds,    
\end{equation}
where $w_c\in\eta^{-1}(c)\cap (0,L)$. Indeed, by the proof of Lemma~\ref{lem:geods_omega_infty}, if $\gamma(t)=(x(t),y(t))$ is a maximal geodesic in $\Omega_\infty$ that is neither vertical nor horizontal, i.e., such that $\vert C_\gamma\vert\in\left(0,2\pi b\right)$, then the period of $y$ as a function of $x$~is $\Delta_\gamma=\Delta(C_\gamma)$.
By the Dominated Convergence Theorem, $\Delta(c)$ is continuous, and hence $\lim\limits_{c\to 0}\Delta(c)=0$.

\begin{proposition}\label{thm:finiteintersectionbounded}
Let $\gamma(t)=(x(t),y(t))$ be a unit speed geodesic in $\Omega_\infty$, with $x(0)=0$ and $C_\gamma \in (0,2\pi b)$. Let $T>0$ and $m\ge1$ be fixed, and assume that $\gamma\big([-T,T]\big)$ intersects the $x$-axis at most $m$ times. Then, for all $t\in[-T,T]$,
\begin{equation}\label{eq:boundsxnumzeros}
\big \vert x(t)\big\vert \le\big(1+\tfrac m2\big)\Delta_\gamma.
\end{equation}
\end{proposition}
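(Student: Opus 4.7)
The plan is to translate the hypothesis on the number of intersections with the $x$-axis into a bound on the length of the corresponding $x$-interval. By \Cref{lem:geods_omega_infty}, since $C_\gamma\in(0,2\pi b)$, the geodesic $\gamma$ is the graph of a periodic surjective function $y\colon\R\to[-w_\gamma,w_\gamma]$ with period $\Delta_\gamma=\Delta(C_\gamma)$. From $x'(t)=C_\gamma/\eta(y(t))^2>0$, the function $x\colon\R\to\R$ is a strictly increasing bijection with $x(0)=0$, and intersections of $\gamma([-T,T])$ with the $x$-axis correspond bijectively to zeros of $y$ in $[x(-T),x(T)]$.

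The central step is to show that consecutive zeros of $y(x)$ are at distance exactly $\Delta_\gamma/2$. Since $\eta$ is even, for any $x_0\in\R$ the map $\sigma(x,y)=(2x_0-x,-y)$ is an isometry of $\Omega_\infty$ fixing $(x_0,0)$ and sending $(x'_0,y'_0)$ to $(-x'_0,-y'_0)$. If $y(x_0)=0$, then the curve $t\mapsto\sigma(\gamma(-t))$ is a geodesic with the same initial position and velocity as $\gamma$, hence equals $\gamma$; this gives $y(x_0+u)=-y(x_0-u)$ for all $u\in\R$, and combined with $\Delta_\gamma$-periodicity forces $y(x_0+\Delta_\gamma/2)=0$. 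Using the unit-speed condition and conservation of $\eta(y)^2x'$, one has $(dy/dx)^2=(\eta(y)^2-C_\gamma^2)/C_\gamma^2$, which vanishes only at $y=\pm w_\gamma$; hence $y(x)$ is strictly monotonic between consecutive extrema, so has exactly one zero in each half-period, and all zeros form the arithmetic progression $\{x_0+n\Delta_\gamma/2:n\in\Z\}$.

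To conclude, if $[x(-T),x(T)]$ contains $k\le m$ zeros $z_1<\cdots<z_k$ of $y$, then $z_1-\Delta_\gamma/2$ and $z_k+\Delta_\gamma/2$ are also zeros lying outside $[x(-T),x(T)]$, whence
\begin{equation*}
x(T)-x(-T)<\bigl(z_k+\tfrac{\Delta_\gamma}{2}\bigr)-\bigl(z_1-\tfrac{\Delta_\gamma}{2}\bigr)=(k+1)\tfrac{\Delta_\gamma}{2}\le(m+1)\tfrac{\Delta_\gamma}{2}.
\end{equation*}
(The case $k=0$ is analogous, since $[x(-T),x(T)]$ then lies strictly between two consecutive zeros and has length less than $\Delta_\gamma/2$.) Since $x(0)=0$ and $x$ is monotonic, $x(-T)\le 0\le x(T)$, so
\begin{equation*}
|x(t)|\le x(T)-x(-T)\le(1+m/2)\Delta_\gamma
\end{equation*}
for all $t\in[-T,T]$, as required.

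The main obstacle is the equal-spacing claim for the zeros of $y(x)$; everything else is elementary counting. This hinges on identifying the correct reflective symmetry of $\Omega_\infty$ across the $x$-axis and combining it with time reversal to obtain a self-map of the geodesic $\gamma$, after which $\Delta_\gamma$-periodicity forces the zero set to be an arithmetic progression with common difference $\Delta_\gamma/2$.
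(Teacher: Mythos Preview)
Your proof is correct and follows the same overall strategy as the paper: use \Cref{lem:geods_omega_infty} to view $\gamma$ as the graph of a periodic function $y(x)$, then convert the bound on the number of zeros of $y$ into a bound on the $x$-extent. The paper's argument is shorter: it only uses that each interval $[\alpha,\alpha+\Delta_\gamma)$ contains exactly two zeros of $y$ (which follows from monotonicity between extrema), and bounds $|x(t)|$ directly from $0$, noting that $[0,|x(t)|]$ contains at least $2\lfloor |x(t)|/\Delta_\gamma\rfloor$ zeros. Your route through the reflection isometry $\sigma(x,y)=(2x_0-x,-y)$ establishes the finer structural fact that consecutive zeros are \emph{exactly} $\Delta_\gamma/2$ apart, which yields the slightly sharper bound $x(T)-x(-T)<(m+1)\Delta_\gamma/2$, but this extra precision is not needed for the statement.

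One small expositional slip: when you write ``the curve $t\mapsto\sigma(\gamma(-t))$ has the same initial position and velocity as $\gamma$'', this is only literally true if the zero $x_0$ happens to be $x(0)$. In general you should compare $t\mapsto\sigma(\gamma(2t_0-t))$ with $\gamma$, where $\gamma(t_0)=(x_0,0)$; the conclusion $y(x_0+u)=-y(x_0-u)$ is unaffected.
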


\begin{proof}
By Lemma~\ref{lem:geods_omega_infty}, $\gamma$ is the graph of a function $y=y(x)$ that is periodic of period $\Delta_\gamma$. Intersections of $\gamma$ with the $x$-axis correspond to zeros of $y$, and on each interval $\left[\alpha,\alpha+\Delta_\gamma\right)$, the function $y$ has exactly two zeros, which implies  \eqref{eq:boundsxnumzeros}.
\end{proof}

In order to state the next result, recall that the Riemannian disk $\Omega_a$ is identified with the ellipsoidal hemisphere \eqref{eq:quotient}, contained in $\mathds R^3$, and endowed with a metric conformal to the induced Euclidean metric, parametrized by  $(x_1,r,x_4)$, as in \eqref{eq:param_omega_a}.

\begin{proposition}\label{prop:corgammaan-new}
Given $m\geq2$, let $(\alpha_k,s_k)_{k\in\N}$ be a sequence of points in $\mathcal B_m$ with $\alpha_k\nearrow+\infty$ as $k\to+\infty$. 
Let $\gamma_k\colon (-1,1)\to \Omega_{\alpha_k}$ be the affine reparametrization of $\gamma_{\alpha_k,s_k}$, with coordinates $\big(x_{1,k},r_k,x_{4,k}\big)$. In particular, 
 $x_{1,k}\colon (-1,1)\to \R$ is odd, while $x_{4,k}\colon (-1,1)\to \R$ is either odd or even, according to $m$ being odd or even. For all $1\leq\ell\leq m$, let $I_\ell:=\big(-1+\frac{2(\ell-1)}m,-1+\frac{2\ell}m\big)$, and let $\widehat\gamma_k\colon (-1,1)\to \Omega_\infty$ be the curve with components $\big(x_{1,k},x_{4,k}\big)$. Then, for each $1\leq \ell\leq m$, the sequence $\left(\widehat\gamma_k\vert_{I_\ell}\right)_{k}$ converges smoothly, up to subsequences, to the vertical geodesic $\{0\}\times(-L,L)$ in~$\Omega_\infty$.
\end{proposition}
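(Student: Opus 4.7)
The plan is to combine the smooth convergence of ambient metrics $\Omega_{\alpha_k}\to\Omega_\infty$ from \Cref{thm:limitstrip} with the classification of geodesics of $\Omega_\infty$ in \Cref{lem:geods_omega_infty}, using the intersection-count invariant introduced in the proof of \Cref{thm:mainthm-bifbranches} to rule out the non-vertical alternatives. First I will extract a subsequential smooth limit for each piece. Reparametrizing $\widehat\gamma_k\vert_{I_\ell}$ by $V^2\check\g_{\alpha_k}$-arclength and using that the $V^2\check\g_{\alpha_k}$-length of the full geodesic $\gamma_{\alpha_k,s_k}$ equals the area of the associated minimal $2$-sphere (hence is controlled via the compactness theorem of Choi--Schoen, invoked as in the proof of \Cref{lemma:away-from-bdy}), I will obtain unit-speed curves of uniformly bounded length. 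Applying Arzelà--Ascoli to the geodesic ODE with smoothly converging coefficients, and passing to a subsequence, these curves converge in $C^\infty_{\mathrm{loc}}$ on the interior of $\Omega_\infty$ to a maximal geodesic $\gamma^\ell_\infty$.

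Next I will classify $\gamma^\ell_\infty$ using \Cref{lem:geods_omega_infty}. The horizontal case $\gamma^\ell_\infty\subset\R\times\{0\}$ is incompatible with the transversality of the zeros of $x_{4,k}$, because $\gamma_{\alpha_k,s_k}$ has only simple crossings of $\gamma_\hor$ (which is exactly the content of the discrete-valued invariant $Z$ used in the proof of \Cref{thm:mainthm-bifbranches}); uniform convergence of $x_{4,k}$ to zero on an open interval is therefore impossible. The periodic-graph case is ruled out as follows: since the total number of zeros of $x_{4,k}$ on $(-1,1)$ is exactly $m$, each piece contributes at most finitely many transversal crossings of the $x$-axis in the limit, and \Cref{thm:finiteintersectionbounded} bounds the $x$-excursion of $\gamma^\ell_\infty$; but each piece of $\gamma_{\alpha_k,s_k}$ corresponds to one full vertical ``stroke'' of the zig-zag geodesic, whose $y$-extent must reach both $y=\pm L$, and no periodic graph of amplitude $w<L$ can achieve this. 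Hence $\gamma^\ell_\infty=\{x_0^\ell\}\times(-L,L)$ for some $x_0^\ell\in\R$.

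To conclude $x_0^\ell=0$ for every $\ell$, I will use the symmetries of $\gamma_k$ together with the location of the sharp turns. For the central piece containing $t=0$, the oddness of $x_{1,k}$ gives $x_{1,k}(0)=0$ and hence $x_0^\ell=0$. For the remaining pieces, the sharp turns connecting consecutive strokes occur near the fixed points $(0,0,0,\pm d)$ of the $\O(2)$-action, which under the identification in \Cref{thm:limitstrip} correspond to the boundary corners $(0,\pm L)\in\partial\Omega_\infty$; this forces consecutive vertical segments to share the endpoint $(0,\pm L)$, and iterating yields $x_0^\ell=0$ for all $\ell$. The hard part will be to prove that the sharp turns indeed localize at $(0,\pm L)$ rather than escape to infinity in the $x$-direction, since the conformal metric $V^2\check\g_{\alpha_k}$ degenerates on $\partial\Omega_{\alpha_k}$ and standard ODE compactness fails there; this will require a finer blow-up analysis near the fixed points of the $\O(2)$-action, revealing the catenoidal necks in the corresponding minimal $2$-spheres and controlling the rate at which their size vanishes.
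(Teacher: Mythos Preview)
Your plan has the right overall architecture, but it misses the one observation that makes the paper's proof short, and in its place you leave a genuine gap.

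The key point you do not use is the Clairaut-type first integral coming from the limiting Killing field $\tfrac{\partial}{\partial x}$ on $\Omega_\infty$: for any geodesic in $\Omega_\infty$, the quantity $\eta(y)^2\,x'$ is constant, equal to $C_\gamma$. The paper argues directly that, because $\gamma_k$ meets $\partial\Omega_{\alpha_k}$ orthogonally at both endpoints and the metrics converge smoothly, the approximate Clairaut constant $\eta(x_{4,k})^2\,x_{1,k}'$ tends to $0$ uniformly; hence any subsequential limit geodesic has $C_\gamma=0$ and is vertical. Then \Cref{thm:finiteintersectionbounded} gives $|x_{1,k}(t)|\le (1+\tfrac m2)\Delta_{\gamma_k}$, and since $\Delta(c)\to 0$ as $c\to 0$, this forces $x_0^\ell=0$ for \emph{every} $\ell$ at once. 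No case analysis, no propagation through sharp turns, and no blow-up is needed.

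By contrast, your outline leaves two real problems. First, your length bound appeals to Choi--Schoen as in \Cref{lemma:away-from-bdy}, but that lemma is used for fixed $a$, whereas here the ambient $E(\alpha_k,b,b,d)$ degenerates; you would need a uniform area bound across the family, which you have not justified. Second, and more seriously, your determination of $x_0^\ell=0$ for non-central pieces relies on showing that the sharp turns localize at $(0,\pm L)$, which you yourself flag as ``the hard part'' requiring a finer blow-up near the $\O(2)$-fixed points. That is exactly the step the Clairaut argument renders unnecessary, and without it your proof is incomplete. (Your elimination of the periodic-graph alternative also assumes that each interior piece reaches $y$-values near $\pm L$, which is not evident a priori for $2\le\ell\le m-1$.) I would recommend replacing the case analysis and the proposed blow-up by the conservation-law argument above.
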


\begin{proof} 
Since $\alpha_k\nearrow+\infty$ as $k\to+\infty$, 
it follows from Proposition~\ref{thm:finiteintersectionbounded} that the length of $\gamma_k$ is bounded.
For all $\ell$, up to subsequences, $\widehat\gamma_k$ must then converge to an affinely parametrized geodesic segment in $\Omega_\infty$ (possibly with multiplicity), by Proposition~\ref{thm:limitstrip}. 
Since $\gamma_k$ is orthogonal to $\partial\Omega_{\alpha_k}$ at both endpoints, and $\alpha_k\to\infty$, the function $\eta(x_{4,k})^2x_{1,k}'$ converges to $0$ uniformly, which implies that the limit of $\widehat\gamma_k$ is a segment of vertical geodesic in $\Omega_\infty$, and that $\Delta_{\gamma_k}\searrow0$ as $k\to+\infty$.
This vertical geodesic is $\{0\}\times(-L,L)$, by Corollary~\ref{thm:finiteintersectionbounded} and the fact that $\gamma_k$ intersects $\gamma_\hor$ exactly $m$ times.
For $k$ sufficiently large, each interval $I_\ell$ contains exactly one zero of the function $x_{4,k}$, because $\widehat\gamma_k$ is $C^1$-close to a geodesic in $\Omega_\infty$. Such limit geodesic intersects the horizontal geodesic $m$ times, and these intersections correspond to instants uniformly distributed in the interval $(-1,1)$, since translations in $x$ are isometries of $\Omega_\infty$. 
By the same argument, the endpoints of $\widehat\gamma_k\vert_{I_\ell}$ correspond to sharp turns of $\widehat\gamma_k$ that converge to the endpoints of the vertical geodesic $\{0\}\times(-L,L)$. 
\end{proof}

We are now finally ready to prove Theorem~\ref{mainthmB} in the Introduction.

\begin{proof}[Proof of Theorem~\ref{mainthmB}]
Let $m\geq2$ be fixed. Let $a_m$ be as in \eqref{eq:am},
and $(a(t),s(t))$ be the curve \eqref{eq:bif-curve-extended} as in Theorem~\ref{thm:rabinowitz} corresponding to the bifurcation branch $\mathcal B_{a_*}$ with $a_*=a_m$.
For each $a>a_m$, let $t_{\mathrm{min}}(a):=\min\{ t\geq 0 : a(t)=a\}$ and set 
\begin{equation}
S_m(a):=\Pi^{-1}\big(\gamma_{a,s(t_{\mathrm{min}}(a))}\big),
\end{equation}
which is a nonplanar embedded minimal $2$-sphere in $E(a,b,b,d)$, by Proposition~\ref{prop:even_odd}.

By the proof of Theorem~\ref{thm:mainthm-bifbranches}, namely the fact that \eqref{eq:z} is constant along each bifurcating branch, it follows that $\gamma_{a,s(t_{\mathrm{min}}(a))}$ intersects $\gamma_\hor$ exactly at $m$ points. Therefore, $S_m(a)$ intersects $\Sigma_4(a)$ exactly along $m$ circles.
From Proposition~\ref{prop:corgammaan-new}, the geodesic $\gamma_{a,s(t_{\mathrm{min}}(a))}$ converges smoothly to $\{0\}\times (-L,L)$ with multiplicity $m$ as $a\nearrow+\infty$, and hence $S_m(a)$ converges smoothly to $\Sigma_1(\infty)$ with multiplicity $m$, away from the points $(0,0,0,\pm d)\in\R^4$, which correspond to $(0,\pm L)\in\partial\Omega_\infty$.
In particular, their areas also converge: $|S_m(a)|\to m|\Sigma_1(\infty)|$ as $a\nearrow+\infty$.

The Morse index of $S_m(a)$ is greater than or equal to its equivariant Morse index, which coincides with the Morse index of the geodesic $\gamma_{a,s(t_{\mathrm{min}}(a))}$. 
For $a$ sufficiently large, this index is at least $m-1$ by Proposition~\ref{prop:corgammaan-new}, since this geodesic develops $m-1$ sharp turns near $\partial \Omega_a$, and variations pushing these portions of the geodesic closer to $\partial\Omega_a$ decrease its length.
(These sharp turns correspond to catenoidal necks on $S_m(a)$, cf.~the structural results in \cite{cm-V}.)
Inequality \eqref{eq:liminf-scarring} follows readily. 
\end{proof}

\begin{appendix}
\section{\texorpdfstring{Arithmetic equations satisfied by $a_m$}{Arithmetic equations satisfied by bifurcation instants}}\label{appendixA}

In this Appendix, we use Heun functions to write the general solution to the singular ODE $\mathcal L_a(v)=0$ and provide arithmetic equations involving infinite continued fractions that are satisfied by the bifurcation instants $a_m$ in Theorem~\ref{mainthmB}.
Recall that $(a_m)_{m\geq1}$, defined in \eqref{eq:am}, alternates between the sequences 
$(a_n^\even)_{n\geq1}$ and $(a_n^\odd)_{n\geq0}$ of values of $a>0$ such that $\mathcal L_a(v)=0$ has a nontrivial solution in $V_\even$ and $V_\odd$, respectively, cf.~Propositions~\ref{prop:char-degeneracies} and \ref{prop:degsequences}, and \eqref{eq:vevenvodd}.

\subsection{Heun functions}
The \emph{(local) Heun function} $H\ell(\zeta,q;\alpha,\beta,\gamma,\delta;z)$ is defined as the unique solution $w(z)$ holomorphic at $z=0$ to the singular ODE
\begin{equation}\label{eq:heun}
\begin{cases}
\displaystyle\frac{\dd^2 w}{\dd z^2}+\left(\frac{\gamma}{z}+\frac{\delta}{z-1}+\frac{\epsilon}{z-\zeta}\right)\frac{\dd w}{\dd z}+\frac{\alpha\beta z-q}{z(z-1)(z-\zeta)}w=0,\\[10pt]
w(0)=1, \quad 
\displaystyle w'(0)=\frac{q}{\gamma \,\zeta},
\end{cases}
\end{equation}
where $\epsilon=\alpha+\beta-\gamma-\delta+1$, see e.g.~\cite{heun,sk-nist}.
(The above parameter $\zeta$ is typically denoted $a$ in the literature, but we shall use $\zeta$ here to distinguish it from the parameter $a>0$ used throughout the paper.) Equation \eqref{eq:heun} has four regular singular points, namely $0$, $1$, $\zeta$, and $\infty$, with exponents $(0,1-\gamma)$, $(0,1-\delta)$, $(0,1-\epsilon)$, and $(\alpha,\beta)$, respectively. 
The following can be found in \cite[\S 31.4]{sk-nist}:

\begin{lemma}\label{lemma:Heun}
There is an infinite sequence $(q_m)_{m\geq0}$ of values for the auxiliary parameter $q$ such that $H\ell(\zeta,q;\alpha,\beta,\gamma,\delta;z)$ is analytic at $z=1$, given by solutions~to 
\begin{equation}\label{eq:contfrac}
q=\dfrac{\zeta\, \gamma \, P_1}{Q_1+q-\dfrac{R_1\,P_2}{Q_2+q-\dfrac{R_2\, P_3}{Q_3+q- {}_{\ddots}}}}
\end{equation}
where the sequences $(P_j)_{j\in\N}$, $(Q_j)_{j\in\N}$, and $(R_j)_{j\in\N}$ are given by:
\begin{align*}
P_j &:= \big(j-1+\alpha\big)\big(j-1+\beta\big), \\
\phantom{, \qquad\quad j\in\N.}Q_j &:= j\,\Big(\big(j-1+\gamma\big)\big(1+\zeta \big)+\zeta \delta +\epsilon \Big), \qquad\quad j\in\N.\\
R_j &:= \zeta(j+1)\big(j+\gamma\big),
\end{align*}
\end{lemma}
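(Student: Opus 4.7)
The plan is to recast analyticity at the second finite regular singular point $z = 1$ as the selection of the \emph{minimal} solution of a three-term recurrence satisfied by the Taylor coefficients of $H\ell$ at $z = 0$, translate this selection into the continued fraction \eqref{eq:contfrac} via Pincherle's theorem, and finally produce the infinite sequence $(q_m)$ through a truncation-eigenvalue argument.

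First I would write $H\ell(\zeta, q; \alpha, \beta, \gamma, \delta; z) = \sum_{n \ge 0} c_n z^n$ with $c_0 = 1$, substitute into \eqref{eq:heun}, multiply through by $z(z-1)(z-\zeta)$, and collect coefficients of $z^n$. A direct computation, using $\epsilon = \alpha + \beta - \gamma - \delta + 1$, yields the three-term recurrence
\[
R_n\, c_{n+1} \;-\; (Q_n + q)\, c_n \;+\; P_n\, c_{n-1} \;=\; 0, \qquad n \ge 0,
\]
with the convention $c_{-1} := 0$, where $P_n, Q_n, R_n$ are precisely the polynomials in $n$ given in the statement. The case $n = 0$ recovers the initial condition $c_1 = q/(\gamma \zeta)$, since $R_0 = \zeta \gamma$ and $Q_0 = 0$.

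Next, the Frobenius exponents of \eqref{eq:heun} at $z = 1$ are $0$ and $1 - \delta$, so any solution analytic at $z = 0$ extends to a neighborhood of $z = 1$ as a linear combination of the two local branches there: one analytic, and one behaving like $(1-z)^{1-\delta}$ (or containing a logarithm in resonant cases). The corresponding two linearly independent solutions of the recurrence have distinct growth rates in the Perron--Poincar\'e sense: one is the \emph{dominant} sequence, reflecting the non-analytic branch, and the other is the \emph{minimal} (subdominant) sequence, which is exactly the one compatible with analyticity across $z = 1$. Hence $H\ell$ is analytic at $z = 1$ if and only if the sequence $(c_n)$ generated by $c_0 = 1$ and $c_1 = q/(\gamma\zeta)$ is proportional to the minimal solution. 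Now Pincherle's theorem applies: writing $t_n := c_n/c_{n-1}$ and iteratively solving the recurrence for $t_n$ in terms of $t_{n+1}$ produces, for the minimal solution,
\[
\frac{c_1}{c_0} \;=\; \cfrac{P_1}{Q_1 + q - \cfrac{R_1 P_2}{Q_2 + q - \cfrac{R_2 P_3}{Q_3 + q - \cdots}}},
\]
whenever the continued fraction converges. Substituting $c_1/c_0 = q/(\gamma\zeta)$ and clearing the denominator yields exactly \eqref{eq:contfrac}.

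For the existence of infinitely many solutions $(q_m)_{m \ge 0}$, the standard approach is a truncation argument: imposing $c_{m+1} = 0$ in the recurrence identifies $q$ as an eigenvalue of an $(m+1) \times (m+1)$ real tridiagonal (Jacobi-type) matrix, hence gives $m + 1$ real roots $q_0^{(m)} < \cdots < q_m^{(m)}$; letting $m \to \infty$, interlacing and passage to the limit in the truncated continued fractions yield an unbounded discrete sequence $(q_m)_{m \ge 0}$ with no finite accumulation point. The main technical obstacle in this plan will be verifying the convergence hypothesis of Pincherle's theorem, i.e., that the recurrence genuinely admits a minimal solution for the relevant values of $q$. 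This reduces to a Perron--Kreuser asymptotic analysis of the recurrence coefficients as $n \to \infty$: the leading-order indicial equation has characteristic roots $0$ and $1-\delta$, whose distinctness generically produces two solutions with genuinely different growth rates, with standard refinements handling the resonant case $\delta \in \Z$.
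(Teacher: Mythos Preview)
The paper does not prove this lemma at all: it simply cites \cite[\S 31.4]{sk-nist}, where the result is stated. Your proposal is in fact the standard derivation underlying that reference (three-term recurrence for the Taylor coefficients, Pincherle's theorem to characterize the minimal solution by a continued fraction, truncation to an $(m+1)\times(m+1)$ tridiagonal eigenvalue problem to produce infinitely many accessory-parameter eigenvalues). So there is no discrepancy of approach to discuss; you have supplied what the paper merely quotes.

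One genuine slip in your final paragraph: the Perron--Kreuser characteristic equation for the recurrence $R_n c_{n+1} - (Q_n+q)c_n + P_n c_{n-1}=0$ comes from the leading $n^2$ terms $R_n\sim\zeta n^2$, $Q_n\sim(1+\zeta)n^2$, $P_n\sim n^2$, giving $\zeta t^2-(1+\zeta)t+1=(\zeta t-1)(t-1)=0$, hence roots $t=1$ and $t=1/\zeta$. These are the asymptotic ratios $c_n/c_{n-1}$, corresponding to radii of convergence $1$ (the dominant, singular branch) and $|\zeta|$ (the minimal, analytic branch). The numbers $0$ and $1-\delta$ you wrote are the Frobenius exponents at $z=1$, which is a different object; they enter only at the next order of the asymptotic expansion and govern the subleading behavior, not the existence of a minimal solution. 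The existence of a minimal solution is guaranteed precisely when $|\zeta|\neq 1$, so that the two Perron roots are distinct in modulus; in the paper's application $\zeta=a^2/(a^2-b^2)$ indeed has $|\zeta|>1$ for $a>b$ (and $|\zeta|<1$ for $a<b$, where the roles of the two roots swap), so Pincherle applies. With that correction your plan is sound.
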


\subsection{\texorpdfstring{Solutions to $\mathcal L_a(v)=0$}{Solutions to the singular ODE}}
For all $a>0$, the extension of $\mathcal L_a$ to a differential operator on the extended complex plane $\widehat \C=\C\cup\{\infty\}$ has regular singular points at $z=\pm1$ and $z=\pm\zeta$, where $\zeta:=\frac{a^2}{a^2-b^2}$.
Performing a change of variables with the linear fractional transformation that moves these regular singular points to $\{0,1,\zeta,\infty\}$, we see that $v(z)$ solves $\mathcal L_a(v)=0$ if and only if
\begin{equation*}\label{eq:gensolv}
v(z)= C_\even\,v_\even(z)+C_\odd \,v_\odd(z),
\end{equation*}
for some $C_\even,C_\odd\in\R$, where $v_\even(z)$ and $v_\odd(z)$ are given by
\begin{equation}\label{eq:vevenvoddHeun}
\begin{aligned}
v_\even(z) &= H\ell\left(\zeta,q_\even;\alpha_\even, \beta_\even,\tfrac12,d^2;z^2\right)E(z)\\
v_\odd(z) &= H\ell\left(\zeta,q_\odd;\alpha_\odd, \beta_\odd,\tfrac32,d^2;z^2\right)E(z)\,z,
\end{aligned}
\end{equation}
where $E(z):=\Big(1+\big(\tfrac{b^2}{a^2}-1\big)z^2\Big)^{1+\frac{d^2}{2}}$, and the parameters $q,\alpha,\beta$ are given by: 
\begin{equation}\label{eq:heun-pars}
\begin{aligned}
q_\even&=\tfrac{a^2 \left(d^2-b^2+2\right)-b^2 \left(d^2+2\right)-a^4}{4 \left(a^2-b^2\right)}, &
q_\odd&=\tfrac{a^2 \left(4 d^2-b^2+6\right)-2 b^2 \left(d^2+3\right)-a^4}{4 (a^2- b^2)},\\
\alpha_\even&=\tfrac{3 d^2+3+\sqrt{4 a^2+\left(d^2-1\right)^2}}{4}, &
\alpha_\odd&=\tfrac{3 d^2+5+\sqrt{4 a^2+\left(d^2-1\right)^2}}{4},  \\
\beta_\even&=\tfrac{3 d^2+3-\sqrt{4 a^2+\left(d^2-1\right)^2}}{4}, &
\beta_\odd&=\tfrac{3 d^2+5-\sqrt{4 a^2+\left(d^2-1\right)^2}}{4}. 
\end{aligned}
\end{equation}
Note that $v_\even(0)=1$ and $v_\even'(0)=0$, while $v_\odd(0)=0$ and $v_\odd'(0)=1$.
One may check that $v_\even\in V_\even$ if and only if $H\ell\left(\zeta,q_\even;\alpha_\even, \beta_\even,\tfrac12,d^2;z^2\right)$ is analytic at $z=1$,  
and $v_\odd\in V_\odd$ if and only if $H\ell\left(\zeta,q_\odd;\alpha_\odd, \beta_\odd,\tfrac32,d^2;z^2\right)$ is analytic at $z=1$. Therefore, from Lemma~\ref{lemma:Heun}, we obtain the following:

\begin{proposition}\label{prop:contfraceqns}
The sequences $(a_n^\even)_{n\geq1}$ and $(a_n^\odd)_{n\geq0}$ are among the values of $a>0$ such that 
the corresponding $q_\even$ and $q_\odd$, respectively, solve equation \eqref{eq:contfrac} in Lemma~\ref{lemma:Heun}, with the parameters $\alpha$ and $\beta$ replaced as in \eqref{eq:heun-pars}; the parameter $\gamma$ given by $\tfrac12$ in the even case, and $\tfrac32$ in the odd case; $\delta=d^2$; and $\zeta=\frac{a^2}{a^2-b^2}$.
\end{proposition}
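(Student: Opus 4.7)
The plan is to identify $\mathcal{L}_a(v)=0$ with a Heun equation after a suitable change of variables and gauge transformation, and then read off the statement from \Cref{lemma:Heun} combined with the characterization of degeneracy instants in \Cref{prop:char-degeneracies}.

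First, I would rewrite the ODE $\mathcal{L}_a(v)=0$ in rational form. The coefficients $p_a, q_a$ involve the square root $\sqrt{a^2(1-z^2)+b^2z^2}$, whose branch points lie at $z=\pm\zeta^{1/2}$ with $\zeta=\tfrac{a^2}{a^2-b^2}$. Exploiting the invariance of $\mathcal{L}_a$ under $z\mapsto -z$, I would substitute $w=z^2$, which eliminates the square root and produces an ODE in $w$ whose only singularities in $\widehat\C$ are regular singular points at $w\in\{0,1,\zeta,\infty\}$, i.e., a Heun-type ODE. The symmetry means that the boundary condition $v'(0)=0$ at $z=0$ selects solutions of the form $v(z)=u_\even(z^2)$, while $v(0)=0$ selects $v(z)=z\,u_\odd(z^2)$; in both cases these boundary conditions are automatic.

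Next, I would apply the gauge transformation $u(w)=E(\sqrt{w})\,\widetilde u(w)$, where $E(z)=\big(1+(\tfrac{b^2}{a^2}-1)z^2\big)^{1+d^2/2}$, chosen to normalize the exponents of the transformed ODE at $w=\zeta$ to the standard Heun form $(0,1-\epsilon)$ of \eqref{eq:heun}. A direct (though lengthy) computation then matches the resulting equation to \eqref{eq:heun} with $\zeta=\tfrac{a^2}{a^2-b^2}$, $\delta=d^2$, $\gamma=\tfrac12$ (even case) or $\gamma=\tfrac32$ (odd case), and with $\alpha_*,\beta_*,q_*$ as listed in \eqref{eq:heun-pars}. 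The exponents at $w=\infty$ are $\alpha_*$ and $\beta_*$, which are the roots of an explicit quadratic whose discriminant produces the factor $\sqrt{4a^2+(d^2-1)^2}$ in \eqref{eq:heun-pars}. The normalizations $v_\even(0)=1$ and $v_\odd(0)=0, v_\odd'(0)=1$ match the initial conditions of \eqref{eq:heun}, identifying $v_\even$ and $v_\odd/z$ with the local Heun functions in \eqref{eq:vevenvoddHeun}.

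With the representation \eqref{eq:vevenvoddHeun} in hand, the proposition follows quickly. By \Cref{prop:char-degeneracies} and the definitions \eqref{eq:vevenvodd} of $V_\even$ and $V_\odd$, together with \Cref{lem:bounded-eigenfunctions}, the instant $a=a_n^\even$ (resp.~$a_n^\odd$) is a positive value of $a$ for which $\mathcal L_a(v)=0$ admits a nontrivial solution that is analytic at $z=1$ and satisfies $v'(0)=0$ (resp.~$v(0)=0$). Since $E(z)$ is real analytic and non-zero at $z=1$ (as $1+(\tfrac{b^2}{a^2}-1)z^2$ equals $b^2/a^2>0$ there), analyticity of $v_\even$ (resp.~$v_\odd$) at $z=1$ is equivalent to analyticity at $w=1$ of the corresponding local Heun function. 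Applying \Cref{lemma:Heun} with the parameters identified above, this is in turn equivalent to $q_\even$ (resp.~$q_\odd$) satisfying the continued fraction equation \eqref{eq:contfrac}, which is the claim.

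The main obstacle will be the bookkeeping in the gauge transformation and change of variables: one must carefully track how conjugation by $E(z)$ and substitution $w=z^2$ transform the coefficients of $\mathcal{L}_a$ into the specific Heun form whose exponents at each of the four singular points match the combinatorial identities in \eqref{eq:heun-pars}. This is a routine but delicate algebraic computation.
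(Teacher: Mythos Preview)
Your proposal is correct and follows essentially the same approach as the paper: the paper also reduces $\mathcal L_a(v)=0$ to Heun form via the substitution $w=z^2$ (which sends the singular points $\{0,\pm1,\pm\sqrt\zeta,\infty\}$ in $z$ to $\{0,1,\zeta,\infty\}$ in $w$) together with the gauge factor $E(z)$, then observes that membership of $v_\even$ in $V_\even$ (resp.\ $v_\odd$ in $V_\odd$) is equivalent to analyticity of the corresponding local Heun function at $w=1$, and concludes by invoking \Cref{lemma:Heun}. Your write-up is in fact somewhat more explicit than the paper's on the mechanics of the gauge transformation and on the role of \Cref{prop:char-degeneracies} and \Cref{lem:bounded-eigenfunctions} in connecting analyticity at $z=1$ to the degeneracy instants.
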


\begin{remark}
Note that if $a=b$, then the above change of variables moves the regular singular points to $\{0,1,\infty,\infty\}$, the Heun functions in \eqref{eq:vevenvoddHeun} become so-called \emph{confluent} Heun functions, and $E(z)\equiv1$. In particular, if $a=b=c=d$, then \eqref{eq:vevenvoddHeun} simplify to $v_\even(z)=1-z\,\mathrm{arctanh}(z)$ and $v_\odd(z)=z$, cf.~Lemma~\ref{rem:unbounded-solutions}.
\end{remark}

Finding the values of $q$ that solve a continued fraction equation such as \eqref{eq:contfrac} 
is a very difficult arithmetic problem. Nevertheless, numerical experiments with \eqref{eq:vevenvoddHeun} suggest that, in the special case $b=d=1$, it might be reasonable to conjecture that $a_n^\even=2n$ and $a_n^\odd=2n+1$, for all $n\in\N$, i.e., $a_m=m$ for all $m\geq1$.

\end{appendix}

\providecommand{\bysame}{\leavevmode\hbox to3em{\hrulefill}\thinspace}
\providecommand{\MR}{\relax\ifhmode\unskip\space\fi MR }
\providecommand{\MRhref}[2]{%
  \href{http://www.ams.org/mathscinet-getitem?mr=#1}{#2}
}
\providecommand{\href}[2]{#2}

\end{document}